\newtheorem{thm}{Theorem}[section]
\newtheorem{dfn}[thm]{Definition}
\newtheorem{prop}[thm]{Proposition}
\newtheorem{lem}[thm]{Lemma}
\newtheorem{rem}[thm]{Remark}
\renewcommand{\qed}{\qquad\kern1pt 
 \vbox{\hrule height 0.6pt      %top
  \hbox{\vrule width 0.6pt %left
   \vbox{\vskip 6pt}  %skip
   \hskip 3pt
  \vrule width 1.3pt} %right
 \hrule depth 1.3pt}     %botom
\kern1pt}
\newcommand{\sect}[1]{\section{#1} \setcounter{equation}{0}}
\newcommand{\pt}{\partial}           
\newcommand{\R}{\mathbb R}
\newcommand{\Z}{\mathbb Z}
\newcommand{\N}{\nabla}
\newcommand{\gm}{\gamma}
\newcommand{\lam}{\lambda}
\newcommand{\dl}{\delta}
\newcommand{\Del}{\Delta}
\newcommand{\s}{\sigma}
\renewcommand{\r}{\rho}
\renewcommand{\t}{\tau}
\newcommand{\hr}{\hookrightarrow}
\newcommand{\wt}{\widetilde }
\newcommand{\wh}{\widehat }
\renewcommand{\div}{{\rm{div\,}}}
\newcommand{\dB}{\dot{B}}
\newcommand{\cB}{\dot{B}_{2,1}}
\newcommand{\fB}{\widehat{\dot{B}}}
\newcommand{\cfB}{\widehat{\dot{B}}_{p,1}}
\newcommand{\ve}{\varepsilon}
\newcommand{\mX}{\mathcal{X}}
\newcommand{\mD}{\mathcal{D}}
\newcommand{\mG}{\mathcal{G}}
\newcommand{\al}{\alpha}
\newcommand{\supp}{\text{supp\;}}
\renewcommand{\L}{\mathcal{L}}
\renewcommand{\P}{\mathcal{P}}
\newcommand{\les}{\lesssim\;}
\newcommand{\dsp}{\displaystyle}
\newcommand{\F}{\mathcal{F}}
\newcommand{\Id}{\text{Id}} 
\begin{document}
\title{On the time-decay with the diffusion wave phenomenon of the solution to the compressible Navier-Stokes-Korteweg system in critical spaces} 
%\author{Ryosuke Nakasato}
\maketitle
\vskip5mm
{\small
\hskip7mm\noindent
{\normalsize \sf Takayuki Kobayashi}
\vskip1mm\hskip1.5cm
        Graduate School of Engineering Science, The University of Osaka  \hfill\break\indent\hskip1.5cm
        Osaka 560-8531, Japan \hfill\break\indent\hskip1.5cm 
        kobayashi@sigmath.es.osaka-u.ac.jp
}
\vskip5mm
{\small
\hskip7mm\noindent
{\normalsize \sf Ryosuke Nakasato}
\vskip1mm\hskip1.5cm
        Faculty of Engineering, Shinshu University \hfill\break\indent\hskip1.5cm
        Nagano 380-8553, Japan \hfill\break\indent\hskip1.5cm
        nakasato@shinshu-u.ac.jp}

%%%%%%%%%%%%%%%%%%%%%%%%%%%%%%%%%%%%%%%%%%%
% Abstract
%%%%%%%%%%%%%%%%%%%%%%%%%%%%%%%%%%%%%%%%%%%
\vspace*{2mm}
\begin{abstract} 
We consider the initial value problem of the compressible Navier-Stokes-Korteweg equations in the whole space $\mathbb{R}^d$ ($d \ge 2$). 
The purposes of this paper are to obtain the global-in-time solution around the constant equilibrium states $(\rho_*,0)$ and investigate the $L^p$-$L^1$ type 
time-decay estimates in a scaling critical framework, where $\r_*>0$ is a constant.  
In addition, we study the diffusion wave property came from the wave equation with strong damping for the solution 
with the initial data belonging to the critical Besov space. 
The key idea of the proof is the derivation of the time-decay for the Fourier-Besov norm with higher derivatives 
by using $L^1$-maximal regularity for the perturbed equations around $(\r_*,0)$. 
\end{abstract}

%\tableofcontents %目次

%%%%%%%%%%%%%%%%%%%%%%%%%%%%%%%%%%%%%%%%%%%
%  Section 1 Intro 
%%%%%%%%%%%%%%%%%%%%%%%%%%%%%%%%%%%%%%%%%%%
\baselineskip=5.6mm
\sect{Introduction}

This paper investigates the global well-posedness and asymptotic behavior of solutions   
to the initial value problem of the following compressible   
Navier-Stokes-Korteweg system in the $d$-dimensional Euclidean space $\R^d$ ($d \ge 2$): 
\vskip-1.5mm
\begin{equation} \label{eqn;NSK} 
\left\{
\begin{aligned}
& \pt_t\r+\div (\r u)=0, 
&t>0, x \in \R^d, \\
&\pt_t (\r u) 
+\text{div}\left(\r u \otimes u \right)+\N P(\r)= \text{div} \left(\mathcal{T}(\r,u)+\mathcal{K}(\r)\right)
&t>0, x \in \R^d, \\
&(\r, u)|_{t=0}=(\r_0,u_0), &x \in \R^d,  
\end{aligned}
\right.
\end{equation}
\vskip1.5mm
\noindent
where $\r=\r(t,x):\R_+ \times \R^d \to \R_+$ and  
$u=u(t,x):\R_+ \times \R^d \to \R^d$ denote the density of the fluid and the velocity of the fluid, respectively. 
The pressure $P=P(\r)$ is assumed to be a {\it real analytic function} of $\r$ in a neighborhood of 
$\rho_*>0$. 
In addition, we assume that the pressure $P$ satisfies $P'(\rho_*)>0$. 
The viscous stress tensor $\mathcal{T}(\r,u)$ with the viscosity coefficients $\lam=\lam(\r)$, $\mu=\mu(\r)$ is given by 
$$
\mathcal{T}(\r,u)=2 \mu(\r) D(u)+\lam(\r) \div u \,\text{Id}. 
$$ 
Here $\text{Id}$ denotes the identity matrix which is given by $\Id=(\dl_{ij})_{ij}$ 
with $\dl_{ij}$ designating the Kronecker delta and the deformation tensor $D(u)$ is defined by  
$$
D(u)=\frac{1}{2}(\N u+{}^t(\N u)) \;\;\text{with}\;\;(\N u)_{ij}=\pt_i u_j.  
$$
Throughout this paper,  we assume that $\mu$, $\lam$ are given constants satisfying the standard elliptically conditions $\mu>0$ and $\nu:=\lam+2\mu>0$. 
For  $\kappa=\kappa(\r)$, 
the Korteweg stress tensor $\mathcal{K}(\r)$ in the second equation of \eqref{eqn;NSK} is given by 
\begin{equation} \label{eqn;Korteweg}
\mathcal{K}(\r)=\frac{\kappa}{2}(\Delta \r^2-|\N \r|^2) \,\text{Id} -\kappa \N \r \otimes \N \r, 
%\quad (\Id=(\delta_{ij})_{1 \le i,j \le d}:\text{$d$次単位行列})
\end{equation}
where $\N \r \otimes \N \r$ stands for the tensor product $(\pt_i \r \pt_j \r)_{ij}$.  In general, it is natural that $\kappa$ depends on $\r$. 
For simplicity, in this paper, we assume that $\kappa$ is a positive constant.  

The compressible Navier-Stokes-Korteweg system \eqref{eqn;NSK} is well-known as the Diffuse Interface (DI) model describes the motion of 
a vaper-liquid mixture in a compressible viscous fluid. 
The primitive theory regarding the DI model was first proposed by Van der Waals \cite{Va}. Later, Korteweg \cite{Ko} introduced the stress tensor 
including the term $\N \r \otimes \N \r$. 
The rigorous derivation of the corresponding equations \eqref{eqn;NSK} was given by Dunn-Serrin \cite{Du-Se}. 

For the existence of a strong solution to the problem \eqref{eqn;NSK}, Hattori-Li \cite{Ha-Li,Ha-Li2} established in a inhomogeneous Sobolev framework.   
The one of the purpose of this paper is to obtain a global-in-time solution of the problem \eqref{eqn;NSK}  
in a scaling {\it critical} $L^p(\R^d)$ framework. In order to explain what we mean by critical regularity, 
we focus on a nearly scaling invariance property for the compressible Navier-Stokes-Korteweg system. 
For a solution $(\r,u)$ to the problem \eqref{eqn;NSK},  
the scaled functions $(\r_\al,u_\al)$ given by 
\begin{equation} \label{eqn;c1}
\left\{
\begin{aligned}
 &\r_\al(t,x)= \r(\al^2t,\al x), \\
 &u_\al(t,x)= \al u(\al^2 t, \al x), 
\end{aligned}
\right. \quad \al >0
\end{equation}
also satisfy the same problem without the initial conditions provided that the pressure $P$ is changed into $\al^2 P$.   
By virtue of Fujita-Kato's principle \cite{Fj-Kt}, one can find that 
the invariant function class under the scaling \eqref{eqn;c1} is given by 
{\it critical function spaces}.  
The critical space for the initial data $(\r_0,u_0)$ in problem \eqref{eqn;NSK}, 
for instance, is given by the homogeneous Besov space   
$\dB^{d/p}_{p,\s}(\R^d) \times \dB^{-1+d/p}_{p,\s}(\R^d)$ 
(for the definition of Besov spaces, see Definition 2.15 in \cite{B-C-D}). 
Focusing on the above nearly scaling invariances, Danchin-Desjardins \cite{Da-De} established the result on the small data global existence 
of the problem \eqref{eqn;NSK} in the critical Besov space based on $L^2(\R^d)$. 
Recently, Charve-Danchin-Xu \cite{Ch-Da-Xu} obtained the global well-posedness and Gevrey analyticity for the global solution in more 
general critical $L^p(\R^d)$ framework. As for the another interesting result, see \cite{Has, Zh}. 

Regarding the asymptotic stability for the global solution to the problem \eqref{eqn;NSK} around the constant equilibrium state $(\r_*,0)$, 
there are numerous studies up to the present. 
In the case of the smooth initial data, the many authors have established the $L^2$-$L^1$ type time decay estimate, 
which is the same decay rate as the one of the $L^2(\R^d)$-norm of the fundamental solution to the heat equation provided the initial data belongs to $L^1(\R^d)$ (cf. \cite{Ta-Wa-Xu,Ta-Zh,Wa-Ta,Zh-Ta}). 
%{\rd as follows:
%$$
%\|(\r-\r_*,\r u)(t)\|_{L^2} \les \|(\r_0-\r_*,\r_0 u_0)\|_{L^1} t^{-\frac{d}{2}(1-\frac{1}{2})} 
%$$ 
%}
In the scaling critical case, Chikami-Kobayashi \cite{Ch-Ko} showed that the $\dot{B}_{2,1}^s$-$\dot{B}_{2,\infty}^{-d/2}$ type time-decay estimate holds true 
for the global solution obtained by Danchin-Desjardins \cite{Da-De}. They also handled the global well-posedness in critical Besov spaces 
and the time-decay of solutions in the case of the zero sound speed $P'(\r_*)=0$. 
Later, Kawashima-Shibata-Xu \cite{Ka-Sh-Xu} extended the Chikami-Kobayashi's decay result to the critical $L^p(\R^d)$-Besov framework. 
Inspiring by these studies, in another scaling critical framework, we investigate the asymptotic stability and the more precise asymptotic behavior of solutions as handled in Kawashima-Matsumura-Nishida 
\cite{Ka-Ma-Ni} and Hoff-Zumbrun \cite{Ho-Zu,Ho-Zu2}.   

\sect{Reformulation and Main Results} 
In this paper, we focus on a solution for the density that is close to a constant equilibrium state  
$\rho_*>0$ at spatial infinity. Setting $m:=\r u$,  let us reformulate the problem \eqref{eqn;NSK} as followings:  
%by introducing the new unknown functions $\r:=\wt{\r}-1$ and $B:=\wt{B}-\bar{B}$ 
%to get the following initial-value problem: 
\begin{equation} \label{eqn;mNSK} 
\left\{
\begin{aligned}
& \pt_t\r+\div m=0, 
&t>0, x \in \R^d, \\
&\pt_t m 
+\text{div}\left(\r^{-1} m \otimes m \right)+\N P(\r)= \mathcal{L}\left(\r^{-1}m \right)+\text{div} \left(\mathcal{K}(\r)\right)
&t>0, x \in \R^d, \\
&(\r, m)|_{t=0}=(\r_0,m_0), &x \in \R^d. 
\end{aligned}
\right.
\end{equation}
In what follows, we perform our analysis in the above momentum representation \eqref{eqn;mNSK}. 
The merit of the momentum representation is to maintain the divergence form, which the standard velocity formulation does not equip. 
Taking into account the nonlinear terms equipped with the divergence form, 
we may be possible to obtain better results than the previous studies on the time-decay and analyze the asymptotic behavior of solutions with the critical regularity.  

\subsection{Definition of Function Spaces}
Before stating the main statements of this paper, we introduce some 
notation and definitions. 
For $d \ge 1$ and $1\le p\le\infty$, let $L^p = L^p(\R^d)$ be the Lebesgue space. 
%We introduce the homogeneous Besov spaces as follows: 
For any $f$ belonging to the {\it Schwartz class} $\mathcal{S}=\mathcal{S}(\R^d)$, 
the Fourier transform of $f$ denoted by $\wh{f}=\wh{f}(\xi)$ or $\mathcal{F}[f]=\mathcal{F}[f](\xi)$ is  
$$
 \mathcal{F}[f](\xi)(=\wh{f}(\xi)):= \frac{1}{(2 \pi)^\frac{d}{2}} \int_{\R^d} e^{-i x \cdot \xi}f(x) \,dx.             
$$
Similarly, 
for any $g=g(\xi)$ belonging to $\mathcal{S}(\R^d_\xi)$, the Fourier {\it inverse} transform $\mathcal{F}^{-1}[g]=\mathcal{F}^{-1}[g](x)$ is then defined as 
$$
 \mathcal{F}^{-1}[g](x) := \frac{1}{(2 \pi)^\frac{d}{2}} \int_{\R^d} e^{i x \cdot \xi} g(\xi) \,d\xi.             
$$
Let $\{ \phi_j \}_{j \in \Z}$ be the Littlewood-Paley 
dyadic decomposition of unity, i.e., for a non-negative radially 
symmetric function $\phi \in \mathcal{S}$, we set (for the construction 
of $\{ \phi_j \}_{j \in \Z}$, see e.g., \cite{B-C-D}, \cite{S}) 
\begin{equation*}
\wh{\phi_j}(\xi) := \wh{\phi}(2^{-j}\xi) \;(j \in \Z), \;\;
\sum_{j \in \Z}\wh{\phi}_j(\xi) = 1 \; (\xi \neq 0) \;\;
\text{and} \;\;
\supp \wh{\phi}
\subset
\{\xi \in \R^{d};\frac{1}{2} \le |\xi| \le 2\}. 
\end{equation*}

Let us introduce Fourier-Besov spaces. 
The definition below is inspired by Gr\"{u}nrock \cite{Gr}. 
\vskip2mm

\begin{dfn}[{\it Homogeneous Fourier-Besov spaces}\hspace{0mm}] Let $s \in \R$, $d \ge 1$, $1 \le p,\s \le \infty$ 
and $\mathcal{S}'=\mathcal{S}'(\R^d)$ be the space of tempered distributions. 
We define the homogeneous Fourier--Besov space $\fB_{p,\s}^s = \fB_{p,\s}^s(\R^d)$ as follows:
\begin{align*}
  \fB_{p,\s}^s(\R^d)
  :=\{f\in\mathcal{S}';\widehat{f}\in L^1_{loc}(\R^d), \,\|f\|_{\fB_{p,\s}^s} < \infty\}, \quad
  \|f\|_{\fB_{p,\s}^s}:=\left\|\left\{2^{sj}\|\wh{\phi}_j\wh{f}\|_{L^{p'}}\right\}_{j\in\Z}\right\|_{\ell^\s},
\end{align*} 
where $p'$ is the exponent of H\"older conjugate, namely $1/p+1/{p'}=1$. 
\end{dfn}
\begin{rem}[]
Let $s \in \R$, $d \ge 1$ and $1 \le \s \le \infty$. By virtue of Plancherel's theorem, we see that  
the spaces $\fB_{2,\s}^s$ and $\dB_{2,\s}^s$ coincide in the sence of norm equivalence, 
where and in what follows, $\dB_{2,\s}^s=\dB_{2,\s}^s(\R^d)$ is the homogeneous Besov space defined by 
$$
  \dB^s_{2,\s}:=\{f \in \mathcal{S}';\wh{f}\in L^1_{loc}(\R^d), \,\|f\|_{\dB^s_{2,\s}}<\infty\}, 
  \quad 
  \|f\|_{\dB^s_{2,\s}}:=\left\|\left\{2^{sj}\|\phi_j*f\|_{L^2}\right\}_{j\in\Z}\right\|_{\ell^\s}.  
$$  
%It is well-known that if $s<\frac{n}{p}$ or $(s,\s)=(\frac{n}{p},1)$, 
%then the space is characterized subspace of $\mathcal{S}'(\R^n)$, namely, 
\end{rem}
\vskip2mm
%We also introduce the space-time mixed spaces introduced by Chemin--Lerner \cite{Ch-Le}. 
Taking into account the time variable, we give the definition of the space-time mixed spaces 
introduced by Chemin-Lerner \cite{Ch-Le}.
%Chemin-Lerner sp. 
\vskip2mm
\begin{dfn}[{\it Chemin-Lerner spaces}\hspace{0mm}] 
Let $s \in \R$, $d \ge 1$, $1 \le p \le \infty$, $1 \le \s,r \le \infty$ and $T \in \R_+$.  
We define the Chemin--Lerner space based on the Fourier--Besov space as follows:
\begin{align*}
  \wt{L^r(0,\,}T;\fB_{p,\s}^s)
  :=\overline{C(0,T;\mathcal{S}_0)}^{\|\cdot\|_{\wt{L^r(0,\,}T;\fB_{p,\s}^s)}}, \;\;
  \|f\|_{\wt{L^r(0,\,}T;\fB_{p,\s}^s)}:=\left\|\left\{2^{sj}\|\wh{\phi}_j \wh{f}\|_{L^r(0,T;L^{p'})}\right\}_{j\in\Z}\right\|_{\ell^\s}, 
\end{align*}
where $\mathcal{S}_0=\mathcal{S}_0(\R^d)$ is the set of functions in the Schwartz class 
$\mathcal{S}(\R^d)$  
whose Fourier transform are supported away from $0$. 
\end{dfn}
\begin{rem} 
By Minkowski's inequality, we note that the following continuous embeddings hold between the 
Chemin--Lerner space $\wt{L^r(0,\,}T;\fB_{p,\s}^s)$ and the Bochner space 
$L^r(0,T;\fB_{p,\s}^s)$:
\begin{align*}
      \wt{L^r(0,\,}T;\fB_{p,\s}^s) 
       \hookrightarrow L^r(0,T;\fB_{p,\s}^s) \quad \text{if}\;\; r \ge \s, \quad
      L^r(0,T;\fB_{p,\s}^s) 
       \hookrightarrow \wt{L^r(0,\,}T;\fB_{p,\s}^s) \quad \text{if}\;\; r \le \s 
\end{align*}
(for the definition of Bochner spaces, see the end of this section). 
\end{rem}

\subsection{Global Well-posedness and Decay Estimates}
In the followings, we state the first half of our main results. 
First of all, we state the result on the global well-posedness and $L^p$-$L^1$ type time-decay estimates of global solutions for 
the initial value problem \eqref{eqn;mNSK} in the critical Fourier-Besov spaces. 

\begin{thm}[{\it Global well-posedness}\hspace{0mm}] \label{thm;GWP_FLp}
Let $d \ge 2$ and $1 \le p \le \infty$. Suppose that $(\r_0,m_0)$ satisfy 
$$
(\r_0-\r_*, m_0)
\in \bigg(\cfB^{-1+\frac{d}{p}}\cap \cfB^{\frac{d}{p}}\bigg)(\R^d) \times \cfB^{-1+\frac{d}{p}}(\R^d). 
$$
There exists a positive constant $\ve_0 \ll 1$ such that if 
\begin{equation} \label{assump;initial_FB}
\mX_{p,0}:= \|\r_0-\r_*\|_{\cfB^{-1+\frac{d}{p}}\cap \cfB^{\frac{d}{p}}}
            +\|m_0\|_{\cfB^{-1+\frac{d}{p}}}
\le \ve_0,  
\end{equation}
then the problem \eqref{eqn;mNSK} admits a unique global-in-time solution $(\r,m)$ satisfying 
\begin{gather*}
(\langle \N \rangle(\r-\r_*),m) \in C([0,\infty);\cfB^{-1+\frac{d}{p}}) \cap L^1(\R_+;\cfB^{1+\frac{d}{p}}), 
\end{gather*}
where $\langle \N\rangle$ is the Bessel potential given by $\mathcal{F}^{-1}\langle \xi \rangle\mathcal{F}$ with $\langle\xi\rangle=\sqrt{1+|\xi|^2}$. 
In particular, there exists some constant $C>0$ such that for all $t \ge 0$, 
\begin{equation} \label{est;unif}
\begin{aligned}
\mX_p(t):=
%\|(\r,|\N|\r,u,B,|\N|B)\|_{\wt{\;L^\infty(0},t;\cfB^{-1+\frac{3}{p}})\cap L^1(0,t;\cfB^{1+\frac{3}{p}})} 
%\|\r-\r_*\|_{\wt{\;L^\infty(0},t;\cfB^{-1+\frac{3}{p}} \cap \cfB^{\frac{3}{p}}) \cap L^1(0,t;\cfB^{1+\frac{3}{p}}\cap \cfB^{2+\frac{3}{p}})} 
\|(\langle\N\rangle(\r-\r_*),m)\|_{\wt{\;L^\infty(0},t;\cfB^{-1+\frac{d}{p}})\cap L^1(0,t;\cfB^{1+\frac{d}{p}})} 
\le C \mX_{p,0}. 
\end{aligned}
\end{equation}
\end{thm}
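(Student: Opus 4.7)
Set $\varrho := \r - \r_*$ and expand, using the analyticity of $P$,
$\N P(\r) = P'(\r_*)\N\varrho + R_P(\varrho)$ and $\L(\r^{-1}m) = \r_*^{-1}\L(m) + R_{\L}(\varrho,m)$,
with $R_P$ and $R_{\L}$ at least quadratic in the unknowns. Expanding the Korteweg stress yields the
principal linear contribution $\kappa\r_*\N\Del\varrho$ together with nonlinear remainders of schematic
form $\N(\varrho\Del\varrho)$ and $\N\cdot(\N\varrho\otimes\N\varrho)$. The resulting perturbed system reads
\begin{equation*}
\begin{cases}
\pt_t \varrho + \div m = 0,\\
\pt_t m + P'(\r_*)\N\varrho - \r_*^{-1}\L(m) - \kappa\r_*\N\Del\varrho = N(\varrho,m),
\end{cases}
\end{equation*}
where $N(\varrho,m)$ collects the convective, pressure, variable-viscosity and Korteweg nonlinearities.
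A global solution is to be constructed as a fixed point of the associated Duhamel formula on a small
closed ball of the space defined by the norm $\mX_p(t)$ appearing in \eqref{est;unif}.

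\textbf{Step 2 (Linear analysis and $L^1$-maximal regularity).} Applying $\wh{\phi}_j\F$ and projecting
$m$ onto its solenoidal and irrotational parts, the evolution decouples at each frequency $\xi$ into a
heat equation on the solenoidal component and a $2\times 2$ damped wave system on
$(\wh{\varrho},\wh{(-\Del)^{-1/2}\div m})$ whose characteristic polynomial
\begin{equation*}
\lambda^2 + \frac{\nu}{\r_*}|\xi|^2\,\lambda + \bigl(P'(\r_*) + \kappa\r_*|\xi|^2\bigr)|\xi|^2 = 0
\end{equation*}
has roots with real part $\lesssim -c_0\min(|\xi|^2,1)$ uniformly in $|\xi|>0$. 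The Korteweg term
contributes an additional $|\xi|^4$ damping, which is precisely what justifies the asymmetric weight
$\langle\N\rangle$ on $\varrho$ in \eqref{est;unif}. Combining the frequency-wise semigroup bound with
the elementary identity $\int_0^\infty |\xi|^2 e^{-c_0|\xi|^2 t}\,dt \lesssim 1$ gives, at each dyadic
block, a maximal regularity estimate of the form
\begin{equation*}
\|\wh{\phi}_j\wh{\langle\N\rangle\varrho}\|_{L^\infty_t L^{p'}} + \|\wh{\phi}_j\wh m\|_{L^\infty_t L^{p'}} + 2^{2j}\bigl(\|\wh{\phi}_j\wh{\langle\N\rangle\varrho}\|_{L^1_t L^{p'}} + \|\wh{\phi}_j\wh m\|_{L^1_t L^{p'}}\bigr) \les \text{(data)} + \text{(source)}.
\end{equation*}
Multiplying by $2^{(-1+d/p)j}$ and summing in $j$ yields
$\mX_p(t) \les \mX_{p,0} + \|N(\varrho,m)\|_{\wt{L^1(0,}t;\cfB^{-1+d/p})}$.

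\textbf{Step 3 (Nonlinear closure).} Product and composition estimates in Fourier--Besov spaces follow
from the identity $\wh{fg}=(2\pi)^{-d/2}\wh f *\wh g$, Young's inequality in $L^{p'}$ and Bernstein's
lemma applied to the paraproduct decomposition, using that $\cfB^{d/p}$ is a Banach algebra. The
convection term $\div(\r^{-1}m\otimes m)$, the pressure remainder $R_P(\varrho)$, the variable-viscosity
correction $R_{\L}(\varrho,m)$ and the Korteweg remainders are each shown to be $O(\mX_p(t)^2)$ in
$\wt{L^1}\cfB^{-1+d/p}$, the analytic composition with $P$ being handled by the algebra property. The
resulting inequality $\mX_p(t) \le C\mX_{p,0} + C\mX_p(t)^2$ then closes \eqref{est;unif} for $\ve_0$
small enough via a standard bootstrap/continuation argument; the analogous estimate on the difference of
two solutions yields uniqueness.

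\textbf{Main obstacle.} The most delicate point is the linear analysis of Step 2: the Korteweg term
modifies the high-frequency behavior of the Navier--Stokes block, turning the system from
hyperbolic--parabolic into fully parabolic, and the associated maximal-regularity gain must be
quantified uniformly across all $|\xi|>0$ so as to match the asymmetric $\langle\N\rangle$-weight on
$\varrho$. Calibrating these weights so that the quadratic estimates of Step 3 close at the critical
Fourier--Besov regularity $d/p$, in particular at the borderline $p=\infty$ where $L^{p'}=L^1$, is where
the main technical work must be concentrated.
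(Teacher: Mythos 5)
Your proposal is correct and follows essentially the same route as the paper: linearize around the constant state, establish a frequency-wise ($L^1$-in-time) maximal regularity estimate with the asymmetric $\langle\N\rangle$-weight on the density (the paper's Proposition \ref{lem;pw_L} and Proposition \ref{prop;MR3}, which it imports from \cite{Na2}), close the nonlinear estimate via the algebra property of $\cfB^{d/p}$ (Lemma \ref{lem;nonlin_est}), and conclude by a contraction argument (the paper invokes the abstract bi/trilinear fixed-point Lemma \ref{lem;Banach_fixed_pt} of \cite{Ch-Ko}, while you use an equivalent bootstrap). Two small imprecisions, neither of which is a genuine gap: first, in Step~2 you write that the roots of the characteristic polynomial have real part $\lesssim -c_0\min(|\xi|^2,1)$, but this is the hyperbolic--parabolic rate of the Korteweg-free system; for \eqref{eqn;L} both roots satisfy $\Re\lam_\pm(\xi)\lesssim -c_0|\xi|^2$ \emph{uniformly in} $\xi$ because of the $\kappa|\xi|^4$ contribution to the constant term, and this stronger, uniformly parabolic decay is exactly what you in fact invoke via $\int_0^\infty |\xi|^2 e^{-c_0|\xi|^2 t}\,dt\lesssim 1$ to get the $2^{2j}$ gain on every dyadic block; second, the composition with $I(a)=a/(1+a)$ produces a genuinely cubic term in the convection nonlinearity, so the closed inequality is $\mX_p(t)\les \mX_{p,0}+\mX_p(t)^2+\mX_p(t)^3$ rather than purely quadratic, which changes nothing in the bootstrap but matches the paper's Lemma \ref{lem;nonlin_est}.
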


\begin{thm}[{\it $L^p$-$L^1$ type decay estimates}\hspace{1mm}] \label{thm;Lp-L1}
Let $1 \le p \le \frac{2d}{d-1}$. Let the initial data $(\r_0,m_0)$ 
satisfy the same assumption as in Theorem \ref{thm;GWP_FLp} and 
$(\r,m)$ denote the corresponding global-in-time solution of the problem \eqref{eqn;mNSK}. 
If in addition 
\begin{align} \label{cond;add_small}
\mathcal{D}_{p,0}:= \sup_{j \le j_0} 2^{-\frac{d}{p'}j}\|\phi_j*(\r_0-\r_*,m_0)\|_{\wh{L}^p} \le \ve_0, 
\end{align}
then the global solution $(\r,m)$ satisfies the following decay estimates for all $t \ge 1$, 
\begin{equation} \label{est;decay}
\begin{aligned}
&\||\N|^{s}(\r-\r_*,m)(t)\|_{\cfB^0} \le C\big(\mX_{p,0}+\mD_{p,0}\big)t^{-\frac{d}{2}(1-\frac{1}{p})-\frac{s}{2}}, \quad 
-\frac{d}{p'}<s\le1+\frac{d}{p},  
\end{aligned}
\end{equation}
where $|\N|^s$ is Riesz's pseudo-differential operator defined by $\mathcal{F}^{-1}|\xi|^s\mathcal{F}$. 
In particular, if $p \le 2$, then the following estimate concerning the linear approximation holds  
for all $-d/p'<s \le d/p$:  
\begin{equation} \label{est;lin_approx}
\left\|
\left(
\begin{array}{@{\,}c@{\,}} 
\r-\r_*\\
m
\end{array}
\right)(t)
-G(t,\cdot)*
\left(
\begin{array}{@{\,}c@{\,}} 
\r_0-\r_* \\
m_0
\end{array}
\right)
\right\|_{\cfB^s}
=O\left(t^{-\frac{d}{2}(1-\frac{1}{p})-\frac{s+1}{2}} \delta(t) \right) \;\;(t \to \infty), 
\end{equation}
where $\delta(t)$ is given by $\delta(t)=\log t$ if $d=2$ and $\delta(t)=1$ if $d \ge 3$ and the definition of the Green matrix $G(t,x)$ is given in \S\,\ref{sect;linear}. 
\end{thm}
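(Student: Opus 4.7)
The plan is to combine Duhamel's formula for the reformulated system with sharp decay estimates for the linearized semigroup $G(t)$, and then close a bootstrap in the nonlinear terms using the uniform bound \eqref{est;unif} obtained in Theorem \ref{thm;GWP_FLp}. Writing $\s:=\r-\r_*$ and $U:={}^t(\s,m)$, the system \eqref{eqn;mNSK} takes the abstract form $\pt_t U+\mathcal{A} U=\mathcal{N}(U)$ with $U(0)=U_0$, where $\mathcal{A}$ is the linearization at $(\r_*,0)$ (a coupled viscous-acoustic-Korteweg operator) and $\mathcal{N}(U)$ collects the divergence-form nonlinearities from the convection, the pressure, the variable-density viscous stress, and the Korteweg tensor. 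Duhamel's formula then reads $U(t)=G(t)*U_0+\int_0^t G(t-\tau)*\mathcal{N}(U(\tau))\,d\tau$, and both halves of the theorem will be proved by analyzing these two contributions separately.

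First I would carry out a low/high frequency spectral analysis of $\mathcal{A}$. At low frequencies $|\xi|\ll 1$, the Korteweg term only contributes $\kappa|\xi|^4$, dominated by the viscous $|\xi|^2$, so the usual compressible Navier-Stokes dispersion yields $|\wh G(t,\xi)|\lesssim e^{-c|\xi|^2 t}$; at high frequencies $|\xi|\gtrsim 1$ the capillary term creates a positive spectral gap, giving $|\wh G(t,\xi)|\lesssim e^{-ct}$. Inserting this into the Fourier-Besov norm and exploiting the hypothesis \eqref{cond;add_small} produces, for $-d/p'<s\le 1+d/p$,
\begin{equation*}
\bigl\||\N|^s G(t)*V_0\bigr\|_{\cfB^0}
\lesssim t^{-\frac{d}{2}(1-\frac{1}{p})-\frac{s}{2}}\mD_{p,0}
+e^{-ct}\|V_0\|_{\cfB^{-1+d/p}\cap\cfB^{d/p}},
\end{equation*}
the first term obtained by summing $2^{sj}\|\wh\phi_j e^{-c|\xi|^2 t}\wh V_0\|_{L^{p'}}$ over $j\le j_0$ via Bernstein's inequality, the second by absorbing polynomial weights into the exponential for $j>j_0$.

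Next, setting $\mathcal{E}_s(t):=\sup_{1\le\tau\le t}\tau^{\frac{d}{2}(1-\frac{1}{p})+\frac{s}{2}}\||\N|^s U(\tau)\|_{\cfB^0}$, I would plug the linear bound into Duhamel, split the time integral at $t/2$, and estimate each piece of $\mathcal{N}(U)$ by Fourier-Besov product and composition estimates; the factor $\r^{-1}$ is handled by a Neumann series in the small data regime, and the composition $P(\r)-P(\r_*)$ is legitimate since $P$ is real analytic near $\r_*$ and $\s$ is small by Theorem \ref{thm;GWP_FLp}. Coupling the resulting bilinear/composition estimates with the uniform bound \eqref{est;unif} and bootstrapping $\mathcal{E}_s(t)$, the decay \eqref{est;decay} closes on the full range $-d/p'<s\le 1+d/p$, the upper endpoint being fixed by the regularity carried by the solution and the lower endpoint by the range where Bernstein on the low-frequency block remains admissible.

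Finally, for the linear approximation \eqref{est;lin_approx} when $p\le 2$, I would apply the above scheme to the integrand $U(t)-G(t)*U_0=\int_0^t G(t-\tau)*\mathcal{N}(U(\tau))\,d\tau$ and reinvest the decay from \eqref{est;decay}: each quadratic nonlinearity decays at rate $\tau^{-d(1-1/p)}$ in the relevant norm, so the time convolution against $(t-\tau)^{-\frac{d}{2}(1-\frac{1}{p})-\frac{s+1}{2}}$ produces the extra factor $t^{-1/2}\delta(t)$, the logarithmic loss being forced in the borderline case $d=2$ where the time integral only just fails to be absolutely convergent. The hard part throughout is the bookkeeping at the critical Fourier-Besov regularity: every derivative falling on a nonlinear term must be paid either by low-frequency integrability (supplied by $\mD_{p,0}$) or by the high-frequency exponential decay, and splitting the two mechanisms while summing dyadic blocks at the endpoints $s=-d/p'$ and $s=1+d/p$ is where the delicate Bernstein and interpolation arguments enter.
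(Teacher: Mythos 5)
Your overall plan is essentially the paper's: Duhamel for the momentum form, pointwise heat-kernel bound $|\widehat G(t,\xi)|\lesssim e^{-c|\xi|^2 t}$ for the Green matrix (Proposition~\ref{lem;pw_L}), a low/high frequency split, a time-weighted decay functional closed by a bootstrap, and then a re-investment of \eqref{est;decay} into the Duhamel integral split at $t/2$ to produce the extra factor $t^{-1/2}\delta(t)$ in \eqref{est;lin_approx}, with the log appearing at $d=2$ exactly as you say. Two implementation points are worth flagging, because as sketched the bootstrap may not close without them. First, your functional $\mathcal{E}_s(t)$ is indexed by a single $s$ and only carries the $\cfB^0$ norm of $|\N|^s U$; the nonlinear Duhamel estimates inevitably couple several regularity indices (a quadratic term puts one factor at regularity $-1+d/p$ and the other at $1+d/p$, or their Fourier--Besov analogues), so the functional must control a whole family of norms simultaneously. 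The paper's $\mD_p(t)$ therefore takes a supremum over $s\in(-d/p',1+d/p]$ on the low-frequency side and carries two separate high-frequency pieces (at $\cfB^{-1+d/p}$ and $\cfB^{1+d/p}$, with an extra derivative on the density) weighted by $\tau^\alpha$ with $\alpha=d-\varepsilon$; the paper's route to the high-frequency decay uses a time-weighted maximal-regularity estimate \eqref{decay;high_am} rather than your ``spectral gap $e^{-ct}$'' heuristic, though both mechanisms can be made to work. Second, the constraint $1\le p\le 2d/(d-1)$ does not come from the linear analysis (your low-frequency Bernstein bound is valid for all $p$); it is forced by the index conditions of the Fourier--Besov paraproduct estimate Lemma~\ref{lem;cA-P}, which are needed to place products such as $m\otimes m$ at the borderline regularity $\fB_{p,\infty}^{1-d/p'}$ without losing low-frequency integrability. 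A complete write-up should make both of these points explicit; your proposal gestures at them (``bookkeeping at the critical regularity'') but leaves them as the main open technical burden.
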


\noindent
{\bf Comments on Theorems \ref{thm;GWP_FLp}, \ref{thm;Lp-L1}.} 
\begin{enumerate}[1.]
\item %(Theorem \ref{thm;GWP_FLp}) 
(Global well-posedness) 
As is well-known in the multi-dimensional case, the solvability of the compressible fluid model requires 
the low-frequency of the initial data assumed to be the critical $L^2(\R^d)$-regularity because of the effect on the dispersive operator likes $e^{it|\N|}$ 
(cf. \cite{Ch-Da2, Ch-Da-Xu, Ch-Mi-Zh}). 
However, Theorem \ref{thm;GWP_FLp} asserts that the low-frequency of the initial data 
should not satisfy the $L^2(\R^d)$-regularity and 
the range of the integrable index $p$ is possible to take as far as the end-point $p=\infty$,  
because we consider the solvability of the problem in the critical space based on the Herz type space $\wh{L}^p(\R^d)$ 
(for the definition, see Lemma \ref{lem;Bern} below), 
which guarantees the boundedness of $e^{it|\N|}$.  

The choice of $p=\infty$ makes the function class largest among the Fourier-Besov setting and this is the completely same as the function space 
$\mathcal{X}^{-1}(\R^d)=\{f \in \mathcal{S}'(\R^d);\int_{\R^d}|\xi|^{-1}|\wh{f}(\xi)|\,d\xi<+\infty\}$ introduced by Lei-Lin \cite{Le-Li} (see Lemma \ref{lem;equi} below).  
To the best of author's knowledge, 
Theorem \ref{thm;GWP_FLp} is the first result on the existence of global solutions to the compressible viscous model 
in the critical Fourier-Besov space with the end-point $p=\infty$.  

\item (Time-decay estimates)  
The assertions of Theorem \ref{thm;Lp-L1} 
means that the $L^p$-$L^{1}$ type decay estimate holds true for the global solution with the critical regularity obtained by Theorem \ref{thm;GWP_FLp}.  
By virtue of the pointwise estimate for the linearized problem \eqref{eqn;L} in Lemma \ref{lem;pw_L}, 
one can see that the decay rate of solution is the same one as the heat kernel. 
In the proof of Theorem \ref{thm;Lp-L1} in \S\,\ref{sect;Lp-L1}, 
applying the decay estimates for the linearized solution as developed by Kozono-Ogawa-Taniuchi \cite{Ko-Og-Ta} 
and the refined Fourier splitting methods to the solution of \eqref{eqn;mNSK}, 
we are possible to get the time-decay of $\wh{L}^p(\R^d)$-norm 
when the low frequencies of initial data is assumed to be the assumption \eqref{cond;add_small} weaker than 
$L^1(\R^d)$. Here we notice that the following continuous embeddings holds true for any 
%$d \ge 1$, 
$1 \le p \le \infty$, 
$$
L^1(\R^d) \hr 
\widehat{L}^1(\R^d) \simeq \widehat{\dB}_{1,\infty}^0(\R^d) \hr \fB_{p,\infty}^{-\frac{d}{p'}}(\R^d) 
$$
(for the proofs, Lemmas \ref{lem;sm} and \ref{lem;equi} below). 

%線形近似評価に関して
In regards to the result on the linear approximation  \eqref{est;lin_approx}, the same estimate for smooth solutions in the $L^2(\R^3)$ sense is firstly developed by 
Kawashima-Matsumura-Nishida \cite{Ka-Ma-Ni}. 
Later, Hoff-Zumbrun \cite{Ho-Zu,Ho-Zu2} extended their result to the general $L^p(\R^d)$ space. 
As the asymptotic behavior for the compressible fluid model in a scaling critical framework, the estimate \eqref{est;lin_approx} in Theorem \ref{thm;Lp-L1} is the first result.  
\end{enumerate}

\subsection{Diffusion Wave Property}
As a phenomenon specific the compressible fluid models, it is well-known that the time-decay of the compressible part of the linearized solution 
is influenced by the dispersive effect, which is similar to the fundamental solution of the wave equation. 
Here, the compressible part of some vector valued function $v \in \mathcal{S}'(\R^d)^d$ is represented by $(-\Delta)^{-1} \N \div v$.  
This property is often called the ``{\it diffusion wave property}''. 
Starting with the pioneering work by Hoff-Zumbrun \cite{Ho-Zu,Ho-Zu2}, Shibata \cite{Sh} and Kobayashi-Shibata \cite{Ko-Sh} established 
the refined time-decay estimate for the linearized solution of the compressible Navier-Stokes equations. 
Roughly speaking, they showed that the compressible part of the  linearized solution behaves like $e^{t \nu \Delta} e^{it \gm |\N|} U_0$ 
%\begin{align*}
%U^{lin}(t) \simeq e^{t \Delta} e^{it|\N|} U_0 \simeq \mathcal{}
%\end{align*}
with some initial data $U_0$. 
In regard to the compressible Navier-Stokes-Korteweg system, Kobayashi-Tsuda \cite{Ko-Ts} derived the decay estimate with the diffusion wave property for  
smooth solutions. 
On the other hand, regarding the optimality for the decay rate of the $L^\infty(\R^3)$-norm of the compressible part of the linearized solution 
when the $L^1(\R^3)$-norm of the initial data is bounded, we refer to the recent work by Iwabuchi-\'O hAodha \cite{Iw-Da}. 

To the best of author's knowledge, the decay estimate with the diffusion wave property has not been obtained in scaling critical frameworks. 
In this paper, we focus on the proof of the asymptotics involving its property of the global solution as obtained by Theorem \ref{thm;GWP_FLp}. 
Restricting the integrable index $p=2$, we obtain the following result. 
\begin{thm}[{\it Diffusion wave phenomenon}] \label{thm;asympt}
Let $p=2$ and the initial data $(\r_0,m_0)$ 
satisfy the same assumption as in Theorem \ref{thm;GWP_FLp} and 
$(\r,m)$ denote the corresponding global-in-time solution of the problem \eqref{eqn;mNSK}. 
If in addition 
\begin{align} \label{cond;L1}
%\mathcal{D}_{p,0}:= 
\|(\r_0-\r_*,m_0)\|_{L^1} \le \ve_0, 
\end{align}
then the global solution $(\r,m)$ satisfies the following decay estimates: %for all $t \ge 1$, 
\begin{equation} \label{est;curl_free}
\left\|
\left(
\begin{array}{@{\,}c@{\,}}
\r-\r_*\\
m
\end{array}
\right)(t)
-
\left(
\begin{array}{@{\,}c@{\,}}
0 \\
e^{t \mu \Del}\P_\s m_0
\end{array}
\right)
\right\|_{L^\infty}
=O\left(t^{-\min\left(\frac{3d-1}{4},\frac{d}{2}+\frac{1}{2}\right)}\right) \;\;(t \to \infty). 
\end{equation}
Here $e^{t \mu \Delta} f:=\mathcal{F}^{-1}[e^{-t \mu |\xi|^2}\wh{f}\,]$  
and $\mathcal{P}_\s v:=v+(-\Delta)^{-1} \N \div v$ for $f \in \mathcal{S}'(\R^d)$, $v \in \mathcal{S}'(\R^d)^d$. 
%\text{\rm Id}+(-\Delta)^{-1}\N \div$ is the Helmholtz projection.  
\end{thm}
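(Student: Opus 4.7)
Plan. My strategy is to recognize $(0,\,e^{t\mu\Del}\mathcal{P}_\s m_0)$ as the slowest-decaying component of the linearization of \eqref{eqn;mNSK} around $(\r_*,0)$ and to control the remainder by a sharp diffusion-wave estimate for the linear part together with a Duhamel bound for the nonlinear part. Starting from
\begin{equation*}
\begin{pmatrix}\r-\r_*\\ m\end{pmatrix}(t)
= G(t)*\begin{pmatrix}\r_0-\r_*\\ m_0\end{pmatrix}
+ \int_0^t G(t-\tau)*\begin{pmatrix}0\\ F(\r,m)(\tau)\end{pmatrix}d\tau,
\end{equation*}
where $F(\r,m)$ collects the nonlinearities of \eqref{eqn;mNSK} --- the convection $-\div(\r^{-1}m\otimes m)$, the pressure remainder $-\N\bigl(P(\r)-P(\r_*)-P'(\r_*)(\r-\r_*)\bigr)$, the nonlinear Korteweg terms from \eqref{eqn;Korteweg}, and the variable-coefficient correction to $\mathcal{L}(\r^{-1}m)$ --- all of which are in divergence or gradient-of-quadratic form.

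By the analysis of \S\,\ref{sect;linear}, the Fourier symbol of $G(t)$ is block-diagonal under the Helmholtz decomposition $m=\mathcal{P}_\s m+\mathcal{Q}m$ with $\mathcal{Q}=\Id-\mathcal{P}_\s$: on $\mathcal{P}_\s m$ it reduces to $e^{-t\mu|\xi|^2}$, while the block acting on $(\r-\r_*,\mathcal{Q}m)$ is a damped-wave propagator with eigenvalues $\lam_\pm(\xi)\sim -c_1|\xi|^2\pm i c_2|\xi|$ at low frequencies and a purely parabolic block at high frequencies. Accordingly I split
\begin{equation*}
G(t)*(\r_0-\r_*,m_0)
= (0,\,e^{t\mu\Del}\mathcal{P}_\s m_0) + R_L(t),
\end{equation*}
with $R_L(t)$ the diffusion-wave evolution of $(\r_0-\r_*,\mathcal{Q}m_0)$. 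A stationary-phase argument on the low-frequency kernel (as in Hoff--Zumbrun \cite{Ho-Zu,Ho-Zu2}, Kobayashi--Shibata \cite{Ko-Sh}) combined with the pointwise kernel estimates of Lemma \ref{lem;pw_L} and the $L^1$ hypothesis \eqref{cond;L1} yields $\|R_L(t)\|_{L^\infty}\lesssim t^{-(3d-1)/4}$ for $t\ge 1$: the oscillatory factor $e^{\pm it c_2|\N|}$ contributes the dispersive gain $t^{-(d-1)/4}$ on top of the parabolic decay $t^{-d/2}$, and the high-frequency block decays exponentially.

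For the nonlinear Duhamel contribution I split $\int_0^t=\int_0^{t/2}+\int_{t/2}^t$. The divergence structure of $F(\r,m)$ contributes an extra factor $|\xi|$ in Fourier and hence a supplementary $(t-\tau)^{-1/2}$ in every semigroup estimate. On $[0,t/2]$ I apply the $L^1\!\to\!L^\infty$ bound for $G(t-\tau)$ paired with $L^1$-control of the quadratic building blocks of $F(\r,m)$, obtained from product laws in $\cfB^s$ together with the decay \eqref{est;decay} for $p=2$ (which gives $\|m(\tau)\|_{L^2}^2,\;\|(\r-\r_*)(\tau)\,m(\tau)\|_{L^1}=O(\tau^{-d/2})$, and analogous bounds for the higher-order terms); on $[t/2,t]$ I use the $L^\infty\!\to\!L^\infty$ bound of $G(t-\tau)$ against the $L^\infty$-decay of $F$ extracted from \eqref{est;decay} together with Besov embeddings. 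Summing yields a contribution of order $t^{-(d+1)/2}$, which combined with the $t^{-(3d-1)/4}$ rate of $R_L$ produces the claimed $t^{-\min((3d-1)/4,\,(d+1)/2)}$ bound.

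The principal obstacle is running the $L^1\!\to\!L^\infty$ estimate for the diffusion-wave kernel uniformly across low and high frequencies while preserving the divergence-derivative gain: a crude frequency truncation destroys either the dispersive $t^{-(d-1)/4}$ at low frequencies or the parabolic smoothing at high frequencies, and both are needed to close the argument. A secondary difficulty is that in the critical Fourier--Besov setting the solution is only borderline bounded in $L^\infty$, so the bilinear terms of $F(\r,m)$ must be handled through paradifferential decomposition, with a judicious choice of regularity index $s$ in \eqref{est;decay} ensuring absolute time-integrability of every piece of $F$ at the sharp pointwise rate.
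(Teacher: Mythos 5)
Your overall strategy --- Duhamel decomposition, isolation of the Stokes flow $e^{t\mu\Delta}\mathcal{P}_\sigma m_0$ as the slowest component, stationary-phase estimates for the low-frequency diffusion-wave kernel, and a $[0,t/2]$/$[t/2,t]$ splitting of the nonlinear Duhamel integral --- is exactly the paper's (\S\ref{sect;diff_wave}). However, you explicitly defer the two hard steps rather than resolving them, and two of the deferrals hide genuine gaps. First, the bound $\|R_L(t)\|_{L^\infty}\lesssim t^{-(3d-1)/4}$ does \emph{not} follow from Lemma~\ref{lem;pw_L}: that lemma gives only the parabolic envelope $e^{-\delta_0 t|\xi|^2}$, which after Hausdorff--Young yields the worse rate $t^{-d/2}$. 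The dispersive gain $t^{-(d-1)/4}$ is encoded in the explicit representation \eqref{eqn;mg} and is extracted by Lemma~\ref{lem;DW} (the Kobayashi--Tsuda kernel estimate for $K_{\psi,L}$), which the paper then uses to bound $G^{i,j}_L$ as time- and space-derivatives of $K_{\psi,L}$ (see \eqref{est;fund_1}, \eqref{est;fund_2}). You cite the right sources but conflate the two lemmas; without Lemma~\ref{lem;DW} the claimed rate is not available.

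Second, the $[0,t/2]$ piece of the Duhamel term does not close with the crude $\|m(\tau)\|_{L^2}^2=O(\tau^{-d/2})$: for $d=2$ the integral $\int_0^{t/2}\langle t-\tau\rangle^{-(3d-1)/4}\tau^{-d/2}\,d\tau$ is logarithmically divergent, so one must exploit the derivative structure of each quadratic block to win at least an extra $\tau^{-1/2}$ (e.g.\ $\|(m\cdot\nabla)m\|_{L^1}\lesssim\|m\|_{L^2}\|\nabla m\|_{L^2}=O(\tau^{-d/2-1/2})$), which is precisely what the paper does in Propositions~\ref{prop;ref_decay1}--\ref{prop;ref_decay4} via carefully chosen regularity indices and the product laws of Lemmas~\ref{lem;cA-P} and \ref{lem;KS_type}, together with the low/high frequency splitting tracked by the modified functional $\mathcal{D}(t)$. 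Finally, your proposal never addresses the high-frequency part of $m_c(t)=m(t)-e^{t\mu\Delta}\mathcal{P}_\sigma m_0$: in that regime there is no exponential kernel decay carrying the argument, and the paper instead uses that $e^{t\mu\Delta}\mathcal{P}_\sigma m_0$ is divergence-free and solves the heat flow, so that $(a,m_c)$ satisfies the transformed system \eqref{eqn;m_c}, to which the smoothing estimate of Proposition~\ref{prop;MR3} and the high-frequency nonlinear bounds apply directly, yielding the rate $t^{-\alpha}$ with $\alpha=d-\varepsilon>q(d)$ in \eqref{est;high_mc2}--\eqref{est;mc}. Without that reduction the high-frequency tail of the Stokes-flow subtraction is left unestimated.
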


\noindent
{\bf Comments on Theorem \ref{thm;asympt}.}  
The first half of the decay rate $(3d-1)/4$ in \eqref{est;curl_free} is came from the compressible part of the linearized solution $U_L(t)$ as given by \eqref{eqn;mg}. 
Let us denote it by $U_{c,L}(t)=U_L(t)-{}^t(0,e^{t \mu \Delta}\mathcal{P}_\s m_0)$.  
It is obvious that $e^{t \mu \Delta}\mathcal{P}_\s m_0$ is the solution to the Stokes equation 
in whole space $\R^d$. 
Since we can identify $U_{c,L}(t)$ as $U_{c,L}(t) \simeq e^{t \nu\Delta} e^{it \gm|\N|} U_0$ by applying the Taylor expansion to $\lam_{\pm}(\xi)$ in \eqref{eqn;mg}, 
for some smooth function $f$, it follows from the change of variables 
$\xi \mapsto \xi/\sqrt{t}$ that 
$$
e^{t \nu \Delta} e^{it \gm |\N|} f
= \F^{-1}\Big[e^{-t \nu |\xi|^2}e^{it \gm |\xi|}\wh{f}\,\Big] 
= t^{-\frac{d}{2}} \F^{-1}\Big[e^{-\nu |\xi|^2} e^{i\sqrt{t}\gm |\xi|}\wh{f}\,\Big], %\;\;\Big(\xi \mapsto \frac{\xi}{\sqrt{t}}\Big). 
$$
where $\gm:=\sqrt{P'(\r_*)}>0$. 
By directly using the Hausdorff-Young inequality and $L^\infty(\R^d)$-$L^1(\R^d)$ estimate with $d \ge 2$ for the fundamental solution of the wave equation 
(see e.g., Brenner \cite{Br}), it is straightforward to obtain the first half of decay rate $(3d-1)/4$ as $d/2+(d-1)/4$. 
On the other hand, as is shown in \eqref{est;lin_approx} of Theorem \ref{thm;Lp-L1}, 
the second half of the decay rate $d/2+1/2$ is came from the nonlinear terms.      

\vskip2mm
This paper is organizsed as follows: After preparation on the elementary inequalities in the Fourier-Besov spaces 
and introducing some estimates for the solution of the linearized problem \eqref{eqn;L} 
in \S \,\ref{sect;prelimiary} and \S \,\ref{sect;linear}, we give 
the proof of Theorem \ref{thm;GWP_FLp} in \S \,\ref{sect;GWP}. 
In Section \ref{sect;Lp-L1} , we show the $L^p$-$L^{1}$ type decay estimates and the linear approximations for 
the global solution to the problem \eqref{eqn;mNSK}. 
Section \ref{sect;diff_wave} is devoted to the proof of Theorem \ref{thm;asympt}. 

\vskip2mm 
\noindent
{\bf Notation.}\;Throughout this paper, $C$ stands for a generic constant 
(the meaning of which depends on the context). 
Let $X$ be a Banach space, $I \subset \R$ be an interval,  
and $1\le r \le \infty$. One denotes by the Bochner space   
$L^r(I;X)$ the set of strongly measurable functions $f:I\to X$ such that $t \mapsto \|f(t)\|_X$ 
belongs to $L^r(I)$. For $f \in L^r(I;X)$, one defines $\|f\|_{L^r(I;X)}=\|\|f\|_{X}\|_{L^r(I)}$.   
For simplicity, we denote the Chemin--Lerner space $\wt{L^r(0,T;}\fB_{p,q}^s)$ 
by $\wt{L^r_T}(\fB_{p,q}^s)$. %and Bochner space $L^r(0,T;\fB_{p,q}^s)$ by $L^r_T(\fB^s_{p,q})$. 
For $s \in \R$ and $f \in \mathcal{S}'$, $|\N|^s$ is designating the Riesz potential defined by $|\N|^s f:=\mathcal{F}^{-1}[|\xi|^s \wh{f}\,]$ and 
$\langle\N\rangle^s f:=\mathcal{F}^{-1}[\langle\xi\rangle^s \wh{f}]$ with $\langle\xi\rangle^s:=(1+|\xi|^2)^{s/2}$. 
For any normed space $Y$, we denote by 
$\|(f,g)\|_Y := (\|f\|_Y^2 + \|g\|_Y^2)^{1/2}$ with $f$, $g \in Y$. 
In what follows, $f_j:=\phi_j*f$, $S_{j}f:= \sum_{k \le j-1} f_k$, 
$\tilde{\phi}_j:=\sum_{|j-k|\le 1} \phi_k$ and $\tilde{f}_j:= \tilde{\phi}_j*f$. 
For $1 \le q<\infty$, $\ell^q(\Z)$ denotes the set of all sequences $\{a_j\}_{j \in \Z}$ of real numbers such that $\sum_{j \in \Z}|a_j|^q$ converges. 
In the case $q=\infty$, let $\ell^\infty(\Z)$ denote the set of all bounded sequences of real numbers.  
For some $j_0 \in \mathbb{Z}$, we set 
$u^\ell := \sum_{j \le j_0-1} \phi_j * u$ and 
$u^h := u - u^\ell$. For $s \in \R$ and $1 \le p,r \le \infty$, we also use the following norm restricted to  low-and high-frequency parts: 
\begin{align*}
    &\|u\|_{\cfB^s}^{\ell} := \sum_{j \le j_0} 2^{sj} \|\wh{\phi}_j \wh{u}\|_{L^{p'}} \quad \text{and} \quad 
     \|u\|_{\cfB^s}^h := \sum_{j \ge j_0-1} 2^{sj} \|\wh{\phi}_j \wh{u}\|_{L^{p'}}, \\
    &\|u\|_{\wt{L^r(0,\,}T;\cfB^s)}^\ell
      := \sum_{j \le j_0} 2^{sj} \|\wh{\phi}_j \wh{u}\|_{L^r(0,T;L^{p'})} \quad \text{and} \\
    &\|u\|_{\wt{L^r(0,\,}T;\cfB^s)}^h
      := \sum_{j \ge j_0 - 1} 2^{sj} \|\wh{\phi}_j \wh{u}\|_{L^r(0,T;L^{p'})}.  
\end{align*}
From the definition of the above norms, we easily see that for all $s_1 \le s_2$, $1 \le p \le \infty$, 
$$\|u\|_{\cfB^{s_2}}^\ell \les \|u\|_{\cfB^{s_1}}^\ell, \quad  
\|u\|_{\cfB^{s_1}}^h \les \|u\|_{\cfB^{s_2}}^h. 
$$  
The analogous estimates hold true for the Chimin-Lerner type norms.

%%%%%%%%%%%%%%%%%%%%%%%%%%%%%%%%%%%%%%%%%%%
%  Section 3 Preliminaries
%%%%%%%%%%%%%%%%%%%%%%%%%%%%%%%%%%%%%%%%%%%
\sect{Preliminaries}\label{sect;prelimiary}

In the followings, we present the basic properties with regards to 
the Fourier--Besov spaces. First of all, let us now recall Bernstein's inequalities   
which allows us to obtain some embedding of spaces (for the proofs, see Lemma A.1 in \cite{Ch2}).   
% Bern. ineq. 
\begin{lem}[{\it Bernstein-type lemma} \cite{Ch2}] \label{lem;Bern}
Let $d \ge 1$, $1 \le p \le q \le \infty$, $\lam$, $R >0$ and $0 < R_1 < R_2$.  
For any $s \in \R$, there exists some constant $C>0$ such that 
\begin{align*}
  &\||\N|^s f\|_{\wh{L}^q}
  \le C \lam^{s + d(\frac{1}{p}-\frac{1}{q})}\|f\|_{\wh{L}^p} \quad 
  if \;\; \supp \wh{f} \subset \{\xi \in \R^d;|\xi| \le \lam R\},\\
  &C^{-1} \lam^s \|f\|_{\wh{L}^p}
  \le \||\N|^s f\|_{\wh{L}^p}
  \le C \lam^s \|f\|_{\wh{L}^p} \quad 
  if \;\; \supp \wh{f} \subset \{\xi \in \R^d;\lam R_1 \le |x| \le \lam R_2\}, 
\end{align*}
where the Fourier--Lebesgue space $\wh{L}^p=\wh{L}^p(\R^d)$ is defined by
$$
  \wh{L}^p(\R^d)=\{f \in \mathcal{S}'; \wh{f} \in L^1_{loc}(\R^d),\|f\|_{\wh{L}^p} <\infty\} \quad \text{with} \quad
  \|f\|_{\wh{L}^p}=\|\wh{f}\|_{L^{p'}}. 
$$
\end{lem}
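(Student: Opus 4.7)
Unwinding the definition $\|f\|_{\widehat{L}^p} = \|\widehat{f}\|_{L^{p'}}$, both assertions reduce to multiplier-type estimates on the Fourier side of the form
$$\||\xi|^s \widehat{f}\|_{L^{q'}} \lesssim \lambda^{s + d(1/p - 1/q)} \|\widehat{f}\|_{L^{p'}}$$
under the appropriate support condition on $\widehat{f}$. The plan is to combine a pointwise estimate on $|\xi|^s$ with H\"older's inequality on the compact support of $\widehat{f}$; no dualization or convolution representation is needed, in contrast with the classical $L^p$-based Bernstein inequality.

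For the first (ball) estimate I would introduce an exponent $r \in [1,\infty]$ via $1/r = 1/q' - 1/p' = 1/p - 1/q$, which is nonnegative because $p \le q$. H\"older's inequality then gives
$$\||\xi|^s \widehat{f}\|_{L^{q'}(|\xi| \le \lambda R)} \le \||\xi|^s\|_{L^r(|\xi| \le \lambda R)} \, \|\widehat{f}\|_{L^{p'}},$$
and a direct computation in polar coordinates yields $\||\xi|^s\|_{L^r(|\xi| \le \lambda R)} \simeq \lambda^{s + d/r} = \lambda^{s + d(1/p - 1/q)}$, which is the desired scaling factor. The smoothing role usually played by the Schwartz kernel $\phi_j$ in the classical proof is here trivially supplied by the fact that we are just integrating against an explicit power of $|\xi|$.

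For the second (annular) estimate the proof is essentially a one-line observation: on $\{\lambda R_1 \le |\xi| \le \lambda R_2\}$ one has $|\xi|^s \simeq \lambda^s$ with constants depending only on $R_1$, $R_2$, $s$. Multiplying $\widehat{f}$ pointwise by $|\xi|^s$ and taking $L^{p'}$-norms yields both the upper and lower bound simultaneously, giving the claimed equivalence of norms.

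\textbf{Main obstacle.} The only genuine subtlety is the integrability of $|\xi|^s$ near the origin in the ball case: the radial integral $\int_0^{\lambda R} \rho^{sr + d - 1} \, d\rho$ converges only when $s + d(1/p - 1/q) > 0$. Outside this regime, the statement must be understood only for $\widehat{f}$ vanishing near $0$, which is automatic when the lemma is applied to a Littlewood--Paley block $\widehat{\phi}_j \widehat{f}$ (the only use made in the rest of the paper). A clean way to cover the general case is to insert a smooth radial cutoff $\chi(\xi/(\lambda R))$ that equals $1$ on $\mathrm{supp}\, \widehat{f}$, write $|\xi|^s \widehat{f} = \lambda^s m(\xi/\lambda) \widehat{f}$ with $m(\eta) := |\eta|^s \chi(\eta/R)$, and absorb $m$ into a bounded Fourier multiplier by scaling; the H\"older step is then applied to $m(\xi/\lambda) \widehat{f}$ rather than to $|\xi|^s \widehat{f}$ itself.
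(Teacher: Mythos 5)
Your main argument is correct and is the natural proof here: both estimates are direct Fourier-side computations — H\"older's inequality with the conjugate exponent $r$ determined by $1/r=1/q'-1/p'=1/p-1/q$ together with a polar-coordinate evaluation of $\||\xi|^s\|_{L^r(|\xi|\le\lambda R)}$ for the ball case, and the pointwise comparability $|\xi|^s\simeq\lambda^s$ on the annulus for the second. The paper itself gives no proof (it cites Lemma A.1 of Chikami \cite{Ch2}), but this is exactly the standard argument in the Fourier--Lebesgue setting, and you correctly note that, unlike the $L^p$-based Bernstein lemma, no dualization or convolution kernel is needed.

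One caveat about your ``main obstacle'' paragraph. You correctly identify that the ball estimate requires $s+d(1/p-1/q)>0$ (equivalently, integrability of $|\xi|^{sr}$ at the origin), and this is a real restriction: the statement is simply false when $s+d(1/p-1/q)\le 0$ and $0\in\supp\widehat f$, so the paper's ``for any $s\in\R$'' should be read as applying unrestrictedly only to the annulus estimate. However, your proposed repair — inserting a smooth cutoff $\chi(\xi/(\lambda R))$ and ``absorbing $m(\eta)=|\eta|^s\chi(\eta/R)$ into a bounded multiplier by scaling'' — does not actually help: the cutoff removes only the issue at infinity, while the singularity at $\eta=0$ is untouched, so $m\notin L^r$ (and $m\notin L^\infty$) whenever $s+d/r\le 0$. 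In the Fourier--Lebesgue framework ``bounded multiplier'' just means $m\in L^\infty$, which again forces $s\ge 0$. The honest resolution is the one you half-stated first: for the ball estimate, take $s+d(1/p-1/q)>0$ (or $s\ge 0$); for other $s$ one must assume the support avoids the origin, in which case the annulus estimate already applies.
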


\begin{lem}[{\it Sobolev-type embedding \cite{Na}}\hspace{0mm}] \label{lem;sm}
Let $s \in \R$, $d \ge 1$, $1 \le p_1 \le p_2 \le \infty$ and $1 \le \s_1\le\s_2 \le \infty$.  
Then the following continuous embedding holds:
\begin{align*}
  \fB_{p_1,\s_1}^s(\R^d) \hr \fB_{p_2,\s_2}^{s-d(\frac{1}{p_1}-\frac{1}{p_2})}(\R^d). 
\end{align*}
\end{lem}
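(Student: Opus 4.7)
The plan is to reduce the embedding to a pointwise-in-$j$ domination between the dyadic blocks defining the two Fourier--Besov norms, and then invoke the elementary monotonicity of $\ell^{\s}$ spaces in the summability index. The central tool is Lemma~\ref{lem;Bern}: since $\wh{\phi}_j \wh{f}$ is supported in the annulus $\{|\xi| \sim 2^j\}$, which has finite Lebesgue measure of order $2^{jd}$, and since $p_1 \le p_2$ implies $p_1' \ge p_2'$, a direct application of H\"older's inequality (or equivalently the Bernstein-type inequality in Lemma~\ref{lem;Bern} in its Fourier-side form) gives
\[
\|\wh{\phi}_j \wh{f}\|_{L^{p_2'}} \le C\, 2^{jd(\frac{1}{p_1} - \frac{1}{p_2})} \|\wh{\phi}_j \wh{f}\|_{L^{p_1'}}
\quad \text{for every } j \in \Z,
\]
where the exponent is nonnegative thanks to the ordering $p_1 \le p_2$.

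First I would fix $f \in \fB_{p_1,\s_1}^s(\R^d)$ and, for each $j \in \Z$, apply the above dyadic bound. Multiplying through by the weight $2^{j(s - d(\frac{1}{p_1} - \frac{1}{p_2}))}$ yields
\[
2^{j\bigl(s - d(\frac{1}{p_1} - \frac{1}{p_2})\bigr)} \|\wh{\phi}_j \wh{f}\|_{L^{p_2'}}
\le C\, 2^{sj} \|\wh{\phi}_j \wh{f}\|_{L^{p_1'}},
\]
so that the shift in the regularity index exactly absorbs the loss produced by Bernstein. This is precisely the pointwise-in-$j$ comparison between the sequences appearing in the two Fourier--Besov norms.

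Next I would take $\ell^{\s_2}$-norms in $j$ on both sides. On the right, I would use the elementary monotonicity $\|\{a_j\}\|_{\ell^{\s_2}} \le \|\{a_j\}\|_{\ell^{\s_1}}$, which holds because $\s_1 \le \s_2$. Combining these two inequalities gives
\[
\|f\|_{\fB_{p_2,\s_2}^{\,s - d(\frac{1}{p_1} - \frac{1}{p_2})}} \le C\, \|f\|_{\fB_{p_1,\s_1}^s},
\]
which is exactly the asserted continuous embedding.

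I do not expect any genuine obstacle: the proof is entirely parallel to the classical Sobolev embedding for homogeneous Besov spaces, transported to the Fourier-side formulation. The only point requiring minor care is to verify that the Bernstein step in Lemma~\ref{lem;Bern} applies in the Fourier--Lebesgue setting, but this follows immediately from the compactness of the frequency support of each $\wh{\phi}_j \wh{f}$ together with the identity $\|g\|_{\wh{L}^p} = \|\wh{g}\|_{L^{p'}}$.
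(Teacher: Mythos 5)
Your proof is correct and is essentially the standard argument that the cited reference \cite{Na} uses: H\"older's inequality on the annulus $\supp(\wh{\phi}_j\wh{f})$ (equivalently, the first inequality of Lemma \ref{lem;Bern} with $s=0$ and $\lam=2^j$, noting $p_1\le p_2$ gives $p_1'\ge p_2'$) yields the dyadic bound with loss $2^{jd(1/p_1-1/p_2)}$, which the shifted regularity index absorbs, and the inclusion $\ell^{\s_1}\subset\ell^{\s_2}$ finishes the estimate. No gaps.
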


\begin{lem}[\cite{Ch2,Ko-Yo}] \label{lem;equi}
Let $s \in \R$, $d \ge 1$ and $1 \le p \le \infty$. The spaces $\fB_{p,p'}^s(\R^d)$ and 
$\wh{\dot{H}}_p^s(\R^d)$ satisfy 
$
\fB_{p,p'}^s(\R^d) \simeq \wh{\dot{H}}_p^s(\R^d)
$ 
in the sence of norm equivalence. 
Here the Fourier--Sobolev space $\wh{\dot{H}}_p^s=\wh{\dot{H}}_p^s(\R^d)$ is defined by 
$$
  \wh{\dot{H}}_p^s(\R^d)
  =\{f \in \mathcal{S}'; \wh{f} \in L^1_{loc}(\R^d),\|f\|_{\wh{\dot{H}}_p^s}<\infty\} \quad \text{with} \quad
  \|f\|_{\wh{\dot{H}}_p^s}=\||\N|^sf\|_{\wh{L}^p}. 
$$
\end{lem}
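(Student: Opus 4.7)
The plan is to work entirely on the Fourier side and show that the dyadic weight $\{2^{sj}\wh{\phi}_j(\xi)\}_{j \in \Z}$ is pointwise equivalent to $|\xi|^s$, then put the two norms on the same footing via Fubini. The key structural facts I will use are (a) the support localization $\supp \wh{\phi}_j \subset \{2^{j-1}\le |\xi| \le 2^{j+1}\}$, which gives $2^{sj}\sim|\xi|^s$ uniformly on this support; (b) the bounded-overlap property that each $\xi\ne 0$ lies in the supports of at most some fixed number $N$ of the $\wh{\phi}_j$'s; and (c) the partition-of-unity identity $\sum_{j\in\Z}\wh{\phi}_j(\xi)=1$ on $\R^d\setminus\{0\}$.

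In the generic case $1<p<\infty$ (so $1<p'<\infty$), I would write
\begin{equation*}
\|f\|_{\fB_{p,p'}^s}^{p'}
=\sum_{j\in\Z}2^{sjp'}\int_{\R^d}|\wh{\phi}_j(\xi)\wh{f}(\xi)|^{p'}\,d\xi
=\int_{\R^d}|\wh{f}(\xi)|^{p'}\Bigl(\sum_{j\in\Z}2^{sjp'}|\wh{\phi}_j(\xi)|^{p'}\Bigr)\,d\xi
\end{equation*}
by Fubini, and use (a) to replace $2^{sj}$ by $|\xi|^s$ on each active index, factoring it outside the inner sum. The upper bound $\sum_j|\wh{\phi}_j(\xi)|^{p'}\le N\|\wh{\phi}\|_{L^\infty}^{p'}$ follows from (b); for the lower bound, (c) combined with (b) gives $1=\sum_j\wh{\phi}_j(\xi)\le N\max_j\wh{\phi}_j(\xi)$, so the maximal term is $\ge N^{-1}$ and $\sum_j|\wh{\phi}_j(\xi)|^{p'}\ge N^{-p'}$. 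This yields the two-sided bound $\|f\|_{\fB_{p,p'}^s}\sim \||\xi|^s\wh{f}\|_{L^{p'}}=\|f\|_{\wh{\dot{H}}_p^s}$.

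For the endpoints: when $p=1$, so $p'=\infty$, the $\ell^\infty$ norm over $j$ replaces the sum; the upper bound $\sup_j 2^{sj}\|\wh{\phi}_j\wh{f}\|_{L^\infty}\les\||\xi|^s\wh{f}\|_{L^\infty}$ is immediate from (a), and the reverse bound uses that for each $\xi\ne 0$ some $\wh{\phi}_{j(\xi)}(\xi)\ge 1/N$, giving $|\xi|^s|\wh{f}(\xi)|\les 2^{sj(\xi)}\|\wh{\phi}_{j(\xi)}\wh{f}\|_{L^\infty}$. When $p=\infty$, so $p'=1$, both directions reduce to expanding $\||\xi|^s\wh{f}\|_{L^1}=\int|\xi|^s|\wh{f}|\sum_j\wh{\phi}_j\,d\xi$ and using finite overlap. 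The only nontrivial point in the whole argument is the uniform lower bound on $\sum_j|\wh{\phi}_j(\xi)|^{p'}$, and the argument above—extracting the maximal summand via (b)+(c)—resolves it; beyond that, the proof is essentially Fubini and dyadic localization, with no delicate harmonic analysis required.
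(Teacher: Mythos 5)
Your argument is correct: the Tonelli interchange, the pointwise equivalence $2^{sj}\sim|\xi|^s$ on $\supp\wh{\phi}_j$, the bounded-overlap upper bound, and the extraction of a maximal summand $\wh{\phi}_{j(\xi)}(\xi)\ge N^{-1}$ from the partition of unity for the lower bound together give the two-sided estimate, and the endpoint cases $p'=1,\infty$ are handled soundly. The paper itself offers no proof of this lemma (it simply cites \cite{Ch2,Ko-Yo}), and your argument is precisely the standard one used in those references.
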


\subsection{Bilinear estimates and product estimates}
Let us present the various types of product estimates in 
Fourier--Besov spaces or Chemin--Lerner type spaces. 
In the following lemmas, we state the standard bilinear estimates without their proof 
(for the general statement and their proof, see e.g., Lemma 2.4 in \cite{Ma-Na-Og}).  
\begin{lem}[{\it Bilinear estimates}\hspace{0mm}] \label{lem;bil}
Let $s > 0$ and $1 \le p,\s \le \infty$. 
There exists some constant $C > 0$ such that the following estimate holds: 
\begin{equation} \label{est;prod_bl}
\|fg\|_{\fB_{p,\s}^s}
\le 
C\left(\|f\|_{\wh{L}^\infty}\|g\|_{\fB^s_{p,\s}} 
+ \|f\|_{\fB^s_{p,\s}}\|g\|_{\wh{L}^\infty}\right). 
\end{equation}
\end{lem}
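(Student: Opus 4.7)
The plan is to prove this via Bony's paraproduct decomposition
\[
fg = T_f g + T_g f + R(f,g), \qquad T_f g := \sum_{j \in \Z} S_{j-1} f \cdot g_j, \quad R(f,g) := \sum_{j \in \Z} f_j \tilde{g}_j,
\]
and to estimate each of the three pieces separately in $\fB_{p,\s}^s$. Since multiplication becomes convolution on the Fourier side, the workhorse throughout will be Young's inequality $L^1 * L^{p'} \hookrightarrow L^{p'}$ applied to convolutions of dyadic blocks.

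I would handle the paraproduct $T_f g$ first. Standard Fourier-support considerations show that $\wh{\phi}_j \wh{S_{k-1}f \cdot g_k}$ vanishes unless $|j-k|$ is bounded by an absolute constant $N$. The key observation is that $\wh{S_{k-1}f}(\xi) = \wh{\chi}(2^{-(k-1)}\xi)\wh{f}(\xi)$ for a fixed Schwartz bump $\chi$, so $\|\wh{S_{k-1}f}\|_{L^1} \les \|f\|_{\wh{L}^\infty}$ uniformly in $k$. Combining this with Young's inequality on the $L^{p'}$ factor, multiplying by $2^{sj}$, and taking $\ell^\s$ would immediately give $\|T_f g\|_{\fB_{p,\s}^s} \les \|f\|_{\wh{L}^\infty}\|g\|_{\fB_{p,\s}^s}$. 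The symmetric term $T_g f$ is handled identically by swapping the roles of $f$ and $g$.

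The remainder $R(f,g)$ is the only piece requiring care. Each product $f_k \tilde{g}_k$ has Fourier support in the ball of radius $C \cdot 2^k$, so its $j$-th dyadic block survives only for $k \ge j - N$. Using $\|\wh{\tilde{g}_k}\|_{L^1} \les \|g\|_{\wh{L}^\infty}$ together with Young's inequality, I would obtain, after multiplying by $2^{sj}$,
\[
2^{sj}\|\wh{\phi}_j \wh{R(f,g)}\|_{L^{p'}} \les \|g\|_{\wh{L}^\infty}\sum_{k \ge j-N} 2^{s(j-k)} \bigl(2^{sk}\|\wh{f_k}\|_{L^{p'}}\bigr).
\]
The hypothesis $s > 0$ enters decisively at this step: the weight sequence $\{2^{s(j-k)}\,\mathbf{1}_{k \ge j-N}\}$ is summable precisely under this assumption, and Young's inequality for discrete convolutions then delivers $\|R(f,g)\|_{\fB_{p,\s}^s} \les \|g\|_{\wh{L}^\infty}\|f\|_{\fB_{p,\s}^s}$, which (or its symmetric partner) is the second half of \eqref{est;prod_bl}.

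The structure is largely routine once the Bony decomposition is set up; the main obstacle will be purely bookkeeping — carefully tracking how the Fourier-support intersections of the paraproduct and remainder pieces force fundamentally different summation ranges (a bounded neighborhood of $j$ versus a high-frequency tail above $j-N$) and noting that the threshold $s > 0$ appearing in the statement is forced solely by the geometric-series summation in the remainder term.
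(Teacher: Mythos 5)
Your proof is correct, and it follows the standard Bony-paraproduct strategy that the paper itself points to (the paper states this lemma without proof and refers to Lemma~2.4 of Matsui--Nakasato--Ogawa, where the same paraproduct/Young-inequality argument in Fourier--Besov/Fourier--Herz type spaces is carried out). The two paraproduct pieces are handled by the observation that $|\wh{\phi}\,|\le 1$ forces $\|\wh{S_{k-1}f}\|_{L^1}\lesssim\|\wh f\|_{L^1}=\|f\|_{\wh L^\infty}$ uniformly in $k$, combined with $L^1*L^{p'}\hr L^{p'}$ on the dyadic blocks and the quasi-orthogonality that localizes the sum to $|j-k|\le N$; the remainder is handled by the ball (rather than annulus) support of $f_k\tilde g_k$, which gives the high-frequency tail $k\ge j-N$, and the resulting geometric weight $2^{s(j-k)}$ is $\ell^1$ precisely because $s>0$, so $\ell^1*\ell^\s\hr\ell^\s$ closes the estimate for every $1\le\s\le\infty$. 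This is exactly the mechanism behind the cited reference; there is no gap.
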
   

%\begin{lem}[Product estimates \cite{Na}] \label{lem;A-P}
%Let $s_i \in \R$, $1 \le p,q \le \infty$, $T \in \R_+$ and $1 \le r,r_1,r_2 \le \infty$ with $\frac{1}{r}=\frac{1}{r_1}+\frac{1}{r_2}$.  
%If $s \in \R$ satisfying $|s|<\frac{n}{p}$ for $2 \le p$ and $-\frac{n}{p'}<s<\frac{n}{p}$ for $1\le p<2$, 
%then there exists some constant $C>0$ such that 
%\begin{equation} \label{est;prod_A-P2}
%\|fg\|_{\wt{L^r_T(}\fB_{p,q}^{s})}
%\le C\|f\|_{\wt{L^{r_1}_T(}\fB_{p,q}^s)} 
%     \|g\|_{ L^{r_2}_T(\wh{L}^{\infty}) \cap \wt{L^{r_2}_T(}\fB_{p,\infty}^\frac{n}{p})}. 
%\end{equation}
%\end{lem}
The following product estimate in Besov spaces is well-known as in \cite{Ab-Pa}. 
The similar estimate in Fourier--Besov spaces is established in \cite{Na}. 
\begin{lem}[{\it Product estimates} \cite{Na}] \label{lem;A-P}
Let $d \ge 1$ and $1 \le p,\s \le \infty$.  
If $s \in \R$ satisfying $|s|<\frac{d}{p}$ for $2 \le p$ and $-\frac{d}{p'}<s<\frac{d}{p}$ for $1\le p<2$, 
then there exists some constant $C>0$ such that 
\begin{equation} \label{est;prod_A-P2}
\|fg\|_{\fB_{p,\s}^{s}}
\le C\|f\|_{\fB_{p,\s}^s} 
     \|g\|_{\wh{L}^{\infty}\cap\fB_{p,\infty}^\frac{d}{p}}. 
\end{equation}
\end{lem}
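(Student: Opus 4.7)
The plan is to apply Bony's paraproduct decomposition
$$
fg = T_f g + T_g f + R(f,g), \qquad T_h v := \sum_k S_{k-1} h \cdot v_k, \quad R(f,g) := \sum_k f_k\,\wt{g}_k,
$$
and estimate each of the three pieces in $\fB^s_{p,\s}$. Since the Fourier--Besov norm is an $\ell^\s$-weighted $L^{p'}$ norm of $\wh{\phi_j}\wh{u}$ and $\wh{fg} = (2\pi)^{-d/2}\,\wh{f}*\wh{g}$, the analysis moves naturally to the Fourier side, where the main tools are Young's convolution inequality and the Bernstein-type bound
$$
\|\wh{u_k}\|_{L^1} \le C\,2^{kd/p}\,\|\wh{u_k}\|_{L^{p'}},
$$
which is just H\"older's inequality on the compact support $\{|\xi|\sim 2^k\}$.

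For the \emph{low-high} paraproduct $T_g f$, the crude bound $\|\wh{S_{k-1}g}\|_{L^1}\le \|\wh{g}\|_{L^1} = \|g\|_{\wh{L}^\infty}$ combined with Young's $L^1 * L^{p'} \to L^{p'}$ and the fact that $S_{k-1}g\cdot f_k$ is spectrally localized at $|\xi|\sim 2^k$ gives $\|T_g f\|_{\fB^s_{p,\s}} \le C\,\|g\|_{\wh{L}^\infty}\,\|f\|_{\fB^s_{p,\s}}$ \emph{without any constraint on} $s$. For the \emph{high-low} paraproduct $T_f g$, iterating the Bernstein bound scale by scale yields $\|\wh{S_{k-1}f}\|_{L^1}\le C\sum_{\ell\le k-2} 2^{\ell d/p}\|\wh{f_\ell}\|_{L^{p'}}$; writing $\|\wh{f_\ell}\|_{L^{p'}}=2^{-\ell s}c_\ell$ with $\{c_\ell\}\in\ell^\s$ of norm $\|f\|_{\fB^s_{p,\s}}$, the geometric sum converges \emph{precisely when} $s<d/p$, producing a factor $2^{k(d/p-s)}$. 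Coupling with $\|\wh{g_k}\|_{L^{p'}}\le C\,2^{-kd/p}\,\|g\|_{\fB^{d/p}_{p,\infty}}$ the powers of $2^k$ cancel, yielding $\|T_f g\|_{\fB^s_{p,\s}} \le C\,\|f\|_{\fB^s_{p,\s}}\,\|g\|_{\fB^{d/p}_{p,\infty}}$.

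The main obstacle is the remainder $R(f,g)$: since the Fourier support of $f_k\,\wt{g}_k$ is the \emph{ball} $\{|\xi|\lesssim 2^k\}$ rather than a thin annulus, the $j$-th dyadic block receives contributions from every $k\ge j-N$. A direct Young estimate $\|\wh{f_k}*\wh{\wt{g}_k}\|_{L^{p'}}\le \|\wh{f_k}\|_{L^{p'}}\|\wh{\wt{g}_k}\|_{L^1}$ together with $\|\wh{\wt{g}_k}\|_{L^1}\lesssim\|g\|_{\fB^{d/p}_{p,\infty}}$ produces $\sum_{k\ge j-N}2^{s(j-k)}(\cdots)$, convergent only for $s>0$. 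To reach negative regularity one sharpens the pointwise bound by using H\"older on the thin annulus $\{|\xi|\sim 2^j\}$,
$$
\|\wh{\phi_j}(\wh{f_k}*\wh{\wt{g}_k})\|_{L^{p'}} \le C\,2^{jd/p'}\,\|\wh{f_k}*\wh{\wt{g}_k}\|_{L^\infty},
$$
combined with the bilinear Hausdorff--Young estimate $\|\wh{f_k}*\wh{\wt{g}_k}\|_{L^\infty}\le \|\wh{f_k}\|_{L^{p'}}\|\wh{\wt{g}_k}\|_{L^p}$ and the reverse-Bernstein $\|\wh{\wt{g}_k}\|_{L^p}\lesssim 2^{-kd/p'}\,\|g\|_{\fB^{d/p}_{p,\infty}}$ (valid for $p\le 2$ by H\"older on the compact support). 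This turns the summand into $2^{(j-k)(s+d/p')}c_k\|g\|_{\fB^{d/p}_{p,\infty}}$, so summability over $k\ge j-N$ enforces \emph{exactly} $s>-d/p'$. A parallel argument (interchanging the roles of $L^p$ and $L^{p'}$ in the bilinear step, and using the opposite direction of H\"older on compact support) gives the condition $s>-d/p$ for $p\ge 2$. The final $\ell^\s$-summation in $j$ is routine in every case, since each estimate reduces to a convolution of $\{c_k\}$ with a summable geometric sequence.
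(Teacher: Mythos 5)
The paper itself gives no proof of this lemma; it is imported from \cite{Na}, where the argument is exactly the Bony-decomposition scheme you use, so your route is the standard one. Your estimates for $T_g f$ (Young $L^1*L^{p'}\to L^{p'}$ with $\|\wh{S_{k-1}g}\|_{L^1}\le\|g\|_{\wh L^\infty}$, no condition on $s$), for $T_f g$ (scale-by-scale Bernstein giving the geometric sum that converges iff $s<d/p$), and for the remainder when $1\le p\le 2$ (Hölder on the thin output annulus to trade $L^{p'}$ for $2^{jd/p'}L^\infty$, bilinear Young $L^{p'}*L^p\to L^\infty$, and $\|\wh{\wt{g}_k}\|_{L^p}\les 2^{-kd/p'}\|g\|_{\fB^{d/p}_{p,\infty}}$, which indeed needs $p\le p'$) are all correct, and the final $\ell^\s$ summations are genuinely routine convolutions with one-sided geometric sequences.

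The one step that does not work as written is the remainder for $p>2$. You propose to interchange $L^p$ and $L^{p'}$ in the bilinear step, i.e.\ $\|\wh{f_k}*\wh{\wt{g}_k}\|_{L^\infty}\le\|\wh{f_k}\|_{L^p}\|\wh{\wt{g}_k}\|_{L^{p'}}$, and then recover $\|\wh{f_k}\|_{L^{p'}}$ by ``the opposite direction of Hölder on compact support.'' For $p>2$ one has $p>p'$, so Hölder on the annulus bounds the $L^{p'}$ norm by the $L^p$ norm and not conversely; there is no reverse Hölder here, and this chain collapses. The repair is small and stays within your toolkit: since $p'\le 2$, Young's inequality gives $\|\wh{f_k}*\wh{\wt{g}_k}\|_{L^{r'}}\le\|\wh{f_k}\|_{L^{p'}}\|\wh{\wt{g}_k}\|_{L^{p'}}$ with $\frac{1}{r'}=\frac{2}{p'}-1\ge 0$, and Hölder on the output annulus $\{|\xi|\sim 2^j\}$ from $L^{r'}$ down to $L^{p'}$ costs $2^{jd(\frac{1}{p'}-\frac{1}{r'})}=2^{jd/p}$; no Hölder on the $k$-side is needed at all. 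Combined with $\|\wh{\wt{g}_k}\|_{L^{p'}}\les 2^{-kd/p}\|g\|_{\fB^{d/p}_{p,\infty}}$ the summand becomes $2^{(j-k)(s+d/p)}c_k$, which is summable over $k\ge j-N$ exactly when $s>-d/p$, as the lemma requires for $p\ge 2$. With this correction your proof is complete.
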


\begin{lem}[{\it Product estimates in Chemin--Lerner spaces} \cite{Na}] \label{lem;prod_est}
Let $s \in \R$, $1 \le p,\s \le \infty$, $I=(0,T)$ with $T \in \R_+$, 
$1 \le r,r_i \le \infty$ $(i=1,2,3,4)$ satisfying 
$\frac{1}{r}=\frac{1}{r_1}+\frac{1}{r_2}=\frac{1}{r_3}+\frac{1}{r_4}$.  
\begin{enumerate}
\item If $s>0$, there exists some constant $C>0$ such that 
\begin{equation} \label{est;prod_1-1}
\|fg\|_{\wt{L^r(I};\fB_{p,\s}^{s})}
\le C\left(\|f\|_{\wt{L^{r_1}(I};\fB_{p,\s}^s)} 
           \|g\|_{L^{r_2}(I;\wh{L}^{\infty})}
         +\|f\|_{L^{r_3}(I;\wh{L}^\infty)} 
          \|g\|_{\wt{L^{r_4}(I};\fB_{p,\s}^s)}\right). 
\end{equation}
\item 
If $s \in \R$ satisfying $|s|<\frac{d}{p}$ for $2 \le p$ and $-\frac{d}{p'}<s<\frac{d}{p}$ for $1\le p<2$, 
then there exists some constant $C>0$ such that 
\begin{equation} \label{est;prod_1-2}
\|fg\|_{\wt{L^r(I};\fB_{p,\s}^{s})}
\le C\|f\|_{\wt{L^{r_1}(I};\fB_{p,\s}^s)} 
     \|g\|_{ L^{r_2}(I;\wh{L}^{\infty}) \cap \wt{L^{r_2}(I};\fB_{p,\infty}^\frac{d}{p})}. 
\end{equation}
\end{enumerate}
\end{lem}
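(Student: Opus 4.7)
The plan is to reduce both estimates to Bony's paraproduct decomposition performed in frequency space, and then lift to the Chemin--Lerner setting by H\"older's inequality in time. Write
\[
fg = T_f g + T_g f + R(f,g),
\]
with the usual paraproducts
\[
T_f g := \sum_{j \in \Z} S_{j-1}f \cdot \phi_j * g, \qquad R(f,g) := \sum_{j \in \Z} \phi_j * f \cdot \wt{\phi}_j * g,
\]
and estimate each piece separately at the Littlewood--Paley block level before summing in $j$ and lifting in $t$.

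For the paraproduct $T_f g$, the Fourier support of the $j$-th summand lies in an annulus of radius $\sim 2^j$, so only indices with $|j-k|\le N_0$ contribute to the $k$-th block. Using H\"older's inequality in the frequency convolution and the trivial bound $\|S_{j-1}f\|_{\wh L^\infty}\le\|f\|_{\wh L^\infty}$,
\[
\|\wh\phi_k \wh{T_f g}\|_{L^{p'}} \les \|f\|_{\wh L^\infty} \sum_{|j-k|\le N_0} \|\phi_j * g\|_{\wh L^{p'}}.
\]
Multiplying by $2^{sk}$ and taking the $\ell^\s$-norm in $k$ yields $\|T_f g\|_{\fB^s_{p,\s}}\les \|f\|_{\wh L^\infty}\|g\|_{\fB^s_{p,\s}}$, valid for every $s\in\R$. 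The analogous bound for $T_g f$ follows by swapping $f,g$.

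The remainder $R(f,g)$ is the sole source of the constraint on $s$. Its $j$-th term is Fourier supported in a ball of radius $\sim 2^{j+2}$, so only $j\ge k-N_1$ contribute to block $k$. For part (1), bounding $\|\wt\phi_j*g\|_{\wh L^\infty}\le \|g\|_{\wh L^\infty}$ (or the symmetric variant) gives
\[
\|\wh\phi_k \wh{R(f,g)}\|_{L^{p'}}\les \|g\|_{\wh L^\infty} \sum_{j\ge k-N_1}\|\phi_j * f\|_{\wh L^{p'}},
\]
and the geometric factor $\sum_{j\ge k}2^{s(k-j)}$ converges precisely when $s>0$; an $\ell^\s$ Young-type convolution then closes the estimate. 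For part (2) one cannot place $g$ in $\wh L^\infty$ at the remainder. Instead, one invokes Bernstein's inequality from Lemma \ref{lem;Bern} to trade $L^{p'}$ for $L^1$ at the cost of $2^{kd/p'}$, and puts $g$ in $\fB^{d/p}_{p,\infty}$. The upper bound $s<d/p$ ensures convergence of the low--high pairings, while the lower bound ($-d/p$ for $p\ge 2$, the sharper $-d/p'$ for $1\le p<2$) controls the high--high pairings; the asymmetry for $p<2$ reflects the failure of $\fB^0_{p,1}\hr\wh L^\infty$ in that regime, which is exactly why the $g$-norm must be enlarged by $\wh L^\infty$ in the statement.

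Finally, to pass from each $t$-pointwise block inequality to the Chemin--Lerner norm, take the $L^r_t$-norm inside the $\ell^\s_j$ (which is the definition of $\wt{L^r_T}(\fB^s_{p,\s})$) and apply H\"older in time with $1/r=1/r_1+1/r_2$ (resp.\ $1/r_3+1/r_4$) to distribute integrability between the two factors. Because $j$-summation is performed last, this exchange is loss-free. The main obstacle will be securing the sharp range for $s$ in part (2) when $1\le p<2$: retaining the lower endpoint $-d/p'$ without losing any regularity requires combining the Bernstein trick above with a careful $\ell^1_j$ summation along the high--high diagonal, rather than the naive geometric-series argument that suffices in part (1).
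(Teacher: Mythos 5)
The paper itself offers no proof of this lemma: it is quoted from \cite{Na}, and the remark following it notes that part (1) is proved as Lemma 2.4 of \cite{Ma-Na-Og}. Your strategy --- Bony decomposition $fg=T_fg+T_gf+R(f,g)$ estimated blockwise on the Fourier side via Young's inequality for $\wh{f}*\wh{g}$, followed by H\"older in time to lift to the Chemin--Lerner norm --- is exactly the approach of those references, and your treatment of the paraproducts and of part (1) is sound (modulo the harmless notational slip $\|\phi_j*g\|_{\wh{L}^{p'}}$, which should be the $\wh{L}^p$-norm, i.e.\ $\|\wh{\phi}_j\wh{g}\|_{L^{p'}}$).

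The genuine gap is in part (2): the remainder estimate, which is precisely where the stated range of $s$ and the dichotomy between $p\ge 2$ and $p<2$ are decided, is asserted rather than proved, and you yourself flag it as ``the main obstacle.'' Concretely, for the block $\wh{\phi}_k\bigl(\wh{\Delta_jf}*\wh{\wt{\Delta}_jg}\bigr)$ with $j\ge k-N_1$ one must choose the Young/H\"older pairing according to $p$: for $p\ge2$ one may use $L^{p'}*L^{p'}\to L^r$ with $1+\frac1r=\frac2{p'}$ and then H\"older on the output block, gaining a factor $2^{kd/p}$ and hence convergence of $\sum_{j\ge k}2^{(k-j)(s+d/p)}$ for $s>-d/p$; for $p<2$ that pairing is unavailable, and one instead uses $L^{p'}*L^{p}\to L^\infty$ together with H\"older on the annulus to return to $L^{p'}$, which gains only $2^{kd/p'}$ against a loss $2^{jd(\frac1p-\frac1{p'})}$ and yields the condition $s>-d/p'$. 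Your proposed explanation of the asymmetry (``failure of $\fB^0_{p,1}\hr\wh{L}^\infty$'') is not the operative mechanism --- the relevant embedding $\fB^{d/p}_{p,1}\hr\wh{L}^\infty$ holds for every $p$ --- so without the exponent bookkeeping above the claimed range of $s$ in \eqref{est;prod_1-2} is not actually established.
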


\begin{rem}
The proof of Lemma \ref{lem;prod_est} (1) is same as the one of Lemma 2.4 in \cite{Ma-Na-Og}. 
By \eqref{est;prod_bl}, \eqref{est;prod_1-1} and embeddings 
$\cfB^{d/p}(\R^d) \hr \fB_{\infty,1}^0(\R^d) \simeq \wh{L}^\infty(\R^d)$, 
we obtain 
\begin{align} \label{est;Banach_ring}
\|fg\|_{\cfB^{\frac{d}{p}}}
\les \|f\|_{\cfB^\frac{d}{p}}\|g\|_{\cfB^\frac{d}{p}}, \quad 
\|fg\|_{\wt{L^\infty(I};\cfB^\frac{d}{p})}
\les \|f\|_{\wt{L^\infty(I};\cfB^\frac{d}{p})}\|g\|_{\wt{L^\infty(I};\cfB^\frac{d}{p})}. 
\end{align}
\end{rem}

\subsection{Smoothing properties in Chemin--Lerner spaces}
Let us consider the following inhomogeneous heat equation with a diffusive coefficient $\nu > 0$:
\begin{equation} \label{eqn;heat}
\left\{
\begin{aligned}
  &\pt_t u - \nu \Delta u = f, &t>0, \,x \in \R^d, \\
  &u|_{t=0} = u_0, &x \in \R^d.  
\end{aligned}
\right.
\end{equation}

\begin{lem}[{\it Smoothing estimates for} \eqref{eqn;heat} \cite{Ma-Na-Og,Na}] \label{lem;MR}
Let $s \in \R$, $d \ge 1$, $1 \le p,\s \le \infty$, $1 \le r_1 \le r \le \infty$ and $I:=(0,T)$ with $T \in \R_+$. 
Suppose that $u_0 \in \fB^s_{p,\s}$ and $f \in \wt{\,L^{r_1}(I};\wh{\dB}_{p,\s}^{s-2+2/r_1})$. 
The inhomogeneous heat equation \eqref{eqn;heat} has a unique solution $u$ and 
there exists some constant $C = C(r)>0$ such that 
\begin{equation*} 
\nu^\frac{1}{r} \|u\|_{\wt{L^r(I;}\wh{\dB}_{p,\s}^{s+\frac{2}{r}})}
\le C \left(\|u_0\|_{\wh{\dB}_{p,\s}^s} 
      + \nu^{-1+\frac{1}{r_1}} \|f\|_{\wt{L^{r_1}(I};\wh{\dB}_{p,\s}^{s-2+\frac{2}{r_1}})} \right). 
\end{equation*} 
%If $\s$ is finite, $u$ belongs to $C([0,T];\fB^s_{p,\s})$.  
\end{lem}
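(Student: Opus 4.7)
\textbf{Proof plan for Lemma \ref{lem;MR}.} The strategy is to perform a Littlewood-Paley localization in frequency, solve the resulting family of ODEs via Duhamel's formula, and then integrate in time with the help of Young's inequality, finally reassembling the pieces by the $\ell^{\s}$ summation that defines the Chemin-Lerner-Fourier-Besov norm.

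First I would apply $\phi_j\,*$ to \eqref{eqn;heat} and then take the Fourier transform. Writing $u_j := \phi_j*u$, $u_{0,j} := \phi_j*u_0$ and $f_j := \phi_j*f$, Duhamel's formula gives
\begin{equation*}
 \wh{u}_j(t,\xi) = e^{-\nu t |\xi|^2}\wh{u}_{0,j}(\xi) + \int_0^t e^{-\nu (t-\t)|\xi|^2}\wh{f}_j(\t,\xi)\,d\t.
\end{equation*}
Since $\supp \wh{\phi}_j\subset\{\xi:\,2^{j-1}\le|\xi|\le 2^{j+1}\}$, there exists $c_0>0$ with $e^{-\nu t |\xi|^2}\le e^{-c_0 \nu t 2^{2j}}$ on the support of $\wh{u}_j$. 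Taking $L^{p'}_\xi$ norms and using this pointwise bound, I get
\begin{equation*}
 \|\wh{u}_j(t)\|_{L^{p'}} \le e^{-c_0\nu t 2^{2j}}\|\wh{u}_{0,j}\|_{L^{p'}} + \int_0^t e^{-c_0\nu (t-\t)2^{2j}}\|\wh{f}_j(\t)\|_{L^{p'}}\,d\t.
\end{equation*}

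Next I would take the $L^r(0,T)$ norm in time of the right-hand side. For the homogeneous term, the elementary computation
\begin{equation*}
 \|e^{-c_0\nu t 2^{2j}}\|_{L^r(0,T)}\le C(c_0 r)^{-1/r}(\nu 2^{2j})^{-1/r}
\end{equation*}
produces the dyadic factor $2^{-2j/r}$ that combines with the prefactor $2^{sj}$ into the right power $2^{(s+2/r)j}$, together with the $\nu^{-1/r}$ weight matching the statement. For the Duhamel term, I apply Young's convolution inequality in time with $1+1/r=1/r_1+1/\a$, which holds since $r_1\le r$; then
\begin{equation*}
 \Big\|\int_0^{\cdot} e^{-c_0\nu(\cdot -\t)2^{2j}}\|\wh{f}_j(\t)\|_{L^{p'}}\,d\t\Big\|_{L^r(0,T)}
  \le \|e^{-c_0\nu t 2^{2j}}\|_{L^{\a}(0,T)}\,\|\wh{f}_j\|_{L^{r_1}(0,T;L^{p'})},
\end{equation*}
and the same exponential-integration lemma gives the factor $(\nu 2^{2j})^{-1/\a}=(\nu 2^{2j})^{-1+1/r_1-1/r}$, which accounts for exactly the missing regularity $2-2/r_1+2/r$ in the source term together with the prescribed $\nu$-weight.

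Finally I would multiply by $2^{(s+2/r)j}$ and take the $\ell^{\s}(\Z)$ norm in $j$. The homogeneous contribution is bounded by $\|u_0\|_{\wh{\dB}^{s}_{p,\s}}$, while for the Duhamel contribution the factor $\|\wh{f}_j\|_{L^{r_1}(0,T;L^{p'})}$ weighted by $2^{(s-2+2/r_1)j}$ yields exactly $\nu^{-1+1/r_1}\|f\|_{\wt{L^{r_1}(I};\wh{\dB}^{s-2+2/r_1}_{p,\s})}$ after applying Minkowski's inequality for $\ell^{\s}$ and $L^r$. Uniqueness follows from the standard energy/contraction argument applied to the difference of two solutions, which is a homogeneous heat equation with zero data. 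The only slightly delicate point, and where I expect the bookkeeping to be most careful, is tracking the $\nu$-dependence and the exponent $1+1/r=1/r_1+1/\a$ in Young's inequality so that the dyadic factors from the two time integrations combine to give precisely the regularity loss and $\nu$-power stated in the conclusion.
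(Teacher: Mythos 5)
Your argument is correct and is exactly the standard proof of this smoothing estimate: dyadic localization, Duhamel, the bound $e^{-\nu t|\xi|^2}\le e^{-c_0\nu t2^{2j}}$ on $\supp\wh{\phi}_j$, the $L^r$-in-time integration of the exponential, and Young's convolution inequality with $1+1/r=1/r_1+1/\alpha$, with the $\nu$- and dyadic bookkeeping working out as you describe. The paper does not prove Lemma \ref{lem;MR} itself but cites \cite{Ma-Na-Og,Na}, where the proof proceeds along precisely these lines, so your proposal matches the intended argument.
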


%The following lemma plays a crucial role in the proof of time-decay estimates. 
\subsection{Culculus facts}
The following two inequalities play crucial role in the proof of decay estimates 
in Theorem \ref{thm;Lp-L1} (for the proofs, see e.g., \cite{Ch-Da}). 
\begin{lem} \label{lem;ce}
For any $a$, $b > 0$ with $\max (a,b)>1$, there exists a positive constant $C$ such that 
$$
\int_0^t
\langle t - \t \rangle^{-a} \langle \t \rangle^{-b}
d\t
\le C \langle t \rangle^{- \min(a,b)} \quad \text{for all } t \ge 0. 
$$
\end{lem}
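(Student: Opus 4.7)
The inequality is the standard convolution estimate between two polynomially decaying weights, and the plan is to split the integration interval at $\tau=t/2$, where exactly one of the two Japanese brackets $\langle t-\tau\rangle$ and $\langle\tau\rangle$ becomes comparable to $\langle t\rangle$, and then handle each half separately. By the symmetry of the integrand under the change of variables $\tau\mapsto t-\tau$ (which swaps $a$ and $b$), I may assume without loss of generality $a\ge b$, so that $\min(a,b)=b$ and the assumption $\max(a,b)>1$ becomes $a>1$.

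On the low-$\tau$ piece $[0,t/2]$, I would use $t-\tau\ge t/2$, hence $\langle t-\tau\rangle\gtrsim \langle t\rangle$, to pull out a factor $\langle t\rangle^{-a}$ and reduce matters to
\[
\int_0^{t/2}\langle t-\tau\rangle^{-a}\langle\tau\rangle^{-b}\,d\tau
\;\lesssim\; \langle t\rangle^{-a}\int_0^{t/2}\langle\tau\rangle^{-b}\,d\tau.
\]
The remaining integral is bounded by a constant if $b>1$, by $\log\langle t\rangle$ if $b=1$, and by $\langle t\rangle^{1-b}$ if $b<1$. In every case, since $a>1\ge b$, the full product is controlled by $\langle t\rangle^{-b}$: indeed in the first case this is immediate, in the third case the exponent comes out to $1-a-b\le -b$ thanks to $a\ge 1$, and the logarithmic borderline $b=1$ is absorbed by the strict inequality $a>1$.

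On the high-$\tau$ piece $[t/2,t]$, I would symmetrically use $\tau\ge t/2$, hence $\langle\tau\rangle\gtrsim\langle t\rangle$, to pull out $\langle t\rangle^{-b}$, and then perform the change of variables $\sigma=t-\tau$ to obtain
\[
\int_{t/2}^t\langle t-\tau\rangle^{-a}\langle\tau\rangle^{-b}\,d\tau
\;\lesssim\; \langle t\rangle^{-b}\int_0^{t/2}\langle\sigma\rangle^{-a}\,d\sigma
\;\lesssim\; \langle t\rangle^{-b},
\]
where the last step uses the integrability of $\langle\sigma\rangle^{-a}$ at infinity, i.e.\ precisely $a>1$. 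Adding the two contributions gives the desired estimate $\lesssim \langle t\rangle^{-\min(a,b)}$.

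The only mildly delicate point is keeping track of the three subcases $b>1$, $b=1$, $b<1$ in the first piece, and in particular seeing that the extra factor of $\log\langle t\rangle$ in the borderline case and the factor $\langle t\rangle^{1-b}$ in the sub-critical case are both completely absorbed by the strictly stronger decay $\langle t\rangle^{-a}$ with $a>1$. This is exactly where the hypothesis $\max(a,b)>1$ is used, and without it the bound would fail by a logarithm at $a=b=1$.
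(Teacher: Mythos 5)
Your proof is correct and is the standard argument for this type of weighted-convolution bound. Note that the paper itself does not supply a proof for this lemma but cites Chikami--Danchin (\cite{Ch-Da}); the symmetric split at $\tau=t/2$, pulling out $\langle t\rangle^{-a}$ on the low-$\tau$ piece and $\langle t\rangle^{-b}$ on the high-$\tau$ piece, and then integrating the remaining single bracket, is precisely the usual route to this estimate, and your case analysis in $b>1$, $b=1$, $b<1$ correctly isolates where the hypothesis $\max(a,b)>1$ (i.e.\ $a>1$ after the WLOG reduction) is needed to absorb the borderline logarithm.
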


\begin{lem}[{\it Page 73 in} \cite{B-C-D}] \label{lem;cineq}
Let $\delta_0>0$. For all $\s>0$, then there exists a constant $C_\s>0$ depending only on $\s$ such that 
$$
\sup_{t\ge 0}\sum_{j\in\Z}\big(t^\frac{1}{2}2^j\big)^\s e^{-t\delta_02^{2j}} \le C_\s. 
$$
\end{lem}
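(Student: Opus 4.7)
The plan is to decompose the quantity on the left of \eqref{est;curl_free} into a linear \emph{compressible} contribution and a nonlinear remainder. Let $U_L(t):=G(t,\cdot)*(\rho_0-\rho_*,m_0)^{\mathrm T}$ denote the full linearised evolution of the initial data. On the linearised level the Helmholtz projector $\mathcal P_\sigma$ annihilates every gradient in the momentum equation---both the linearised pressure $\gamma^2\nabla(\rho-\rho_*)$ and the Korteweg correction $\kappa\rho_*\nabla\Delta(\rho-\rho_*)$ are pure gradients---so the solenoidal part of the momentum decouples and satisfies the plain Lam\'e heat equation, contributing exactly ${}^t(0,e^{t\mu\Delta}\mathcal P_\sigma m_0)$ to $U_L(t)$. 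Setting $U_{c,L}(t):=U_L(t)-{}^t(0,e^{t\mu\Delta}\mathcal P_\sigma m_0)$ therefore yields
$$
\begin{pmatrix}\rho-\rho_*\\ m\end{pmatrix}(t)-\begin{pmatrix}0\\ e^{t\mu\Delta}\mathcal P_\sigma m_0\end{pmatrix}
= U_{c,L}(t) + \bigl((\rho-\rho_*,m)(t)-U_L(t)\bigr),
$$
and the task reduces to estimating each summand in $L^\infty$.

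For the nonlinear remainder I would invoke the linear-approximation estimate \eqref{est;lin_approx} of Theorem \ref{thm;Lp-L1} at the endpoint $p=2$, $s=d/2$:
$$
\|(\rho-\rho_*,m)(t)-U_L(t)\|_{\widehat{\dot B}^{d/2}_{2,1}}=O\bigl(t^{-(d+1)/2}\delta(t)\bigr).
$$
Lemma \ref{lem;sm} gives $\widehat{\dot B}^{d/2}_{2,1}\hookrightarrow\widehat{\dot B}^{0}_{\infty,1}$, and a direct application of Fourier inversion combined with a Cauchy--Schwarz on each dyadic annulus gives $\widehat{\dot B}^{0}_{\infty,1}\hookrightarrow L^\infty$; hence the nonlinear remainder is $O(t^{-(d+1)/2}\delta(t))$ in $L^\infty$. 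In $d=2$ the resulting $t^{-3/2}\log t$ is absorbed by the slower linear rate $t^{-5/4}=t^{-(3d-1)/4}$ produced in the next step; for $d\ge 3$ one has $\delta(t)\equiv 1$ and the rate $(d+1)/2=d/2+1/2$ enters the minimum directly.

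The heart of the argument is the $L^\infty$ bound on $U_{c,L}(t)$. I would split at the Littlewood--Paley threshold $2^{j_0}$. For $|\xi|\ge 2^{j_0-1}$ the two eigenvalues of the reduced $2\times 2$ system on $(\widehat{\rho-\rho_*},\xi\cdot\widehat m/|\xi|)$ have real part bounded above by a strictly negative constant, and Young's convolution inequality on the Fourier side gives $\|U_{c,L}^h(t)\|_{L^\infty}\lesssim e^{-ct}\|(\rho_0-\rho_*,m_0)\|_{L^1}$. For the low-frequency piece one uses the Taylor expansion
$$
\lambda_\pm(\xi)=-\tfrac{\nu}{2}|\xi|^2\pm i\gamma|\xi|+O(|\xi|^3),\qquad \gamma=\sqrt{P'(\rho_*)},
$$
together with the fact (verifiable from the characteristic polynomial, whose discriminant $\nu^2|\xi|^2-4\gamma^2-4\kappa\rho_*|\xi|^2$ is strictly negative near the origin) that $\mathrm{Re}\,\lambda_\pm(\xi)=-(\nu/2)|\xi|^2$ \emph{exactly} on the low-frequency support, so that one may write $U_{c,L}^\ell(t)$ as a finite sum of terms whose Fourier multipliers take the form $\chi_\ell(\xi)\,a_\pm(\xi)\,e^{-(\nu/2)t|\xi|^2}\,e^{\pm it\gamma|\xi|(1+g(|\xi|^2))}$ with $a_\pm$ and $g$ smooth and bounded near $\xi=0$ and $g(0)=0$. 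The parabolic rescaling $\xi=\eta/\sqrt t$ then produces a Jacobian $t^{-d/2}$, converts the heat factor to the $t$-independent Gaussian $e^{-(\nu/2)|\eta|^2}$, and turns the oscillation into $e^{\pm i\sqrt t\,\gamma|\eta|}$ modulo a phase remainder of order $O(|\eta|^3/\sqrt t)$. Brenner's $L^\infty$--$L^1$ estimate \cite{Br} for the Gaussian-cut-off wave kernel with frequency parameter $\sqrt t\,\gamma$ delivers
$$
\bigl\|\mathcal F^{-1}\bigl[e^{-(\nu/2)|\eta|^2}\,e^{\pm i\sqrt t\,\gamma|\eta|}\bigr]\bigr\|_{L^\infty}\lesssim \langle\sqrt t\rangle^{-(d-1)/2}=t^{-(d-1)/4}\quad(d\ge 2),
$$
and therefore $\|U_{c,L}^\ell(t)\|_{L^\infty}\lesssim t^{-d/2-(d-1)/4}\|(\rho_0-\rho_*,m_0)\|_{L^1}=t^{-(3d-1)/4}\|(\rho_0-\rho_*,m_0)\|_{L^1}$.

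The principal technical hurdle is the treatment of the $O(|\eta|^3/\sqrt t)$ phase remainder after rescaling: it is uniformly small on the effective Gaussian support of the integrand but grows toward the outer edge $|\eta|\sim\sqrt t$ of the rescaled cutoff $\chi_\ell(\eta/\sqrt t)$, so one must either carry out a direct stationary-phase argument on the true phase $\sqrt t\,\gamma|\eta|(1+g(|\eta|^2/t))$ or, more robustly, extract a small fraction of the Gaussian damping $e^{-(\nu/2)|\eta|^2}$ as a ``budget'' that dominates the cubic correction uniformly in $t\ge 1$ and on the rescaled support. Once this is in place, adding the $O(t^{-(d+1)/2}\delta(t))$ nonlinear bound and the $O(t^{-(3d-1)/4})$ linear bound and taking the minimum of the two exponents yields \eqref{est;curl_free}.
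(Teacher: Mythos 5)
Your proposal does not address the statement at hand. The statement is Lemma \ref{lem;cineq}, an elementary summability estimate: for $\delta_0>0$ and $\s>0$ one must show $\sup_{t\ge0}\sum_{j\in\Z}\big(t^{1/2}2^j\big)^\s e^{-t\delta_02^{2j}}\le C_\s$. What you have written instead is an outline of a proof of Theorem \ref{thm;asympt} (the diffusion wave estimate \eqref{est;curl_free}): the decomposition into $U_{c,L}$ and the nonlinear remainder, the Taylor expansion of $\lam_\pm(\xi)$, Brenner's $L^\infty$--$L^1$ estimate for the wave kernel, and so on. None of that machinery has any bearing on the lemma, so as a proof of the stated result the proposal is vacuous.

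For the record, the lemma is a one-paragraph calculus fact (it is quoted from page 73 of \cite{B-C-D} and used in \eqref{est;heat} and similar places). Fix $t>0$ and set $x_j:=t^{1/2}2^j$, so the sum is $\sum_{j\in\Z} x_j^\s e^{-\delta_0 x_j^2}$ with the points $x_j$ forming a geometric sequence of ratio $2$. Splitting at $x_j\le 1$ and $x_j>1$: the first part is dominated by the geometric series $\sum_{k\ge0}2^{-k\s}=(1-2^{-\s})^{-1}$ since $e^{-\delta_0 x^2}\le 1$; for the second part, $x_j^\s e^{-\delta_0 x_j^2}\le C_{\s,\delta_0} e^{-\delta_0 x_j^2/2}$ and $x_j^2\ge 4^{k}$ for the $k$-th point beyond $1$, so that tail is summable uniformly in $t$. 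The case $t=0$ is trivial. This is the argument you needed to supply.
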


\sect{Linear analysis} \label{sect;linear}

In what follows, we assume $\r_*=1$ and $a:=\r-1$. We rapidly obtain the following equations for the 
perturbation denoted by $(a, m)$:  
\begin{equation} \label{eqn;mNSK2} 
\left\{
\begin{aligned}
& \pt_ta+\div m=0, 
&t>0, x \in \R^d, \\
&\pt_t m -\mathcal{L}m+\gm^2 \N a -\kappa \N \Delta a = N(a,m)
&t>0, x \in \R^d, \\
&(a, m)|_{t=0}=(a_0,m_0), &x \in \R^d,   
\end{aligned}
\right.
\end{equation}
where $\gm:=\sqrt{P'(1)}$ and the nonlinear part $N(a,m)$ is given by 
\begin{gather} 
N(a,m)=\text{div} \left((I(a)-1) m \otimes m\right)-I_P(a)\N a 
+ \mathcal{L}(I(a) m)+ \div \mathcal{K}(a), \label{eqn;nonlin} \\ 
I(a):=\frac{a}{1+a}, \quad 
I_P(a):=P'(1+a)-1.    \label{eqn;I}
\end{gather}

In this section, we present the pointwise behavior and regularizing estimates for the solution to the following linearized system 
at the Fourier side:  
\begin{equation} \label{eqn;L} 
\left\{
\begin{aligned}
& \pt_t \wh{a}+i \xi \cdot \wh{m}=\wh{f}, 
&t>0, x \in \R^d, \\
&\pt_t \wh{m} +\mu |\xi|^2 \wh{m}+(\lam+\mu)\xi (\xi \cdot \wh{m}) +i \gm^2 \xi \wh{a} +i \kappa \xi |\xi|^2 \wh{a} = \wh{g}
&t>0, x \in \R^d, \\
&(\wh{a}, \wh{m})|_{t=0}=(\wh{a}_0,\wh{m}_0), &x \in \R^d.   
\end{aligned}
\right.
\end{equation}

\begin{prop}[{\it Pointwise estimate in the Fourier space} \cite{Ch-Da-Xu,Ch-Ko,Na2}\hspace{0mm}] \label{lem;pw_L}
Let $(a,m)$ be a solution to the problem \eqref{eqn;L}. 
There exists some positive constant $C>0$ and $\delta_0>0$ 
such that   
\begin{align*}
\big|\big(\langle\xi\rangle\wh{a},\wh{m}\big)(t,\xi)\big| 
\le Ce^{-t\delta_0|\xi|^2}\big|\big(\langle\xi\rangle\wh{a},\wh{m}\big)(0,\xi)\big| 
+C\int_0^t e^{-(t-\t)\delta_0|\xi|^2}\big|\big(\langle\xi\rangle\wh{f},\wh{g}\big)\big|\,d\t. 
\end{align*}
\end{prop}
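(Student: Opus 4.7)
The plan is to split \eqref{eqn;L} into two decoupled blocks via the Helmholtz decomposition of $\widehat{m}$. Writing $\widehat{m}=\widehat{m}_c+\widehat{m}_\s$ with $\widehat{m}_c:=|\xi|^{-2}(\xi\cdot\widehat{m})\xi$ and $\widehat{m}_\s:=\widehat{m}-\widehat{m}_c$, I would apply the solenoidal projector $\Id-\xi\otimes\xi/|\xi|^2$ to the momentum equation. This kills the pressure term $i\gm^2\xi\widehat{a}$, the Korteweg term $i\kappa\xi|\xi|^2\widehat{a}$, and the gradient contribution $(\lam+\mu)\xi(\xi\cdot\widehat{m})$, reducing the solenoidal block to the pure heat equation $\pt_t\widehat{m}_\s+\mu|\xi|^2\widehat{m}_\s=\widehat{g}_\s$, whose Duhamel formula immediately yields the claimed pointwise bound with $\delta_0=\mu$.

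The essential work lies in the compressible block. Introducing the scalar $c:=|\xi|^{-1}\xi\cdot\widehat{m}$ (so $|c|=|\widehat{m}_c|$), the pair $(\widehat{a},c)$ satisfies
\[
\pt_t\widehat{a}+i|\xi|c=\widehat{f},\qquad \pt_t c+\nu|\xi|^2 c+i(\gm^2+\kappa|\xi|^2)|\xi|\widehat{a}=|\xi|^{-1}\xi\cdot\widehat{g},
\]
with $\nu:=\lam+2\mu$. The eigenvalues $\lam_\pm(\xi)$ of the associated matrix satisfy $\Re\lam_\pm(\xi)\le -\delta_0|\xi|^2$ uniformly in $\xi$: when the discriminant $(\nu^2-4\kappa)|\xi|^4-4\gm^2|\xi|^2$ is nonpositive the roots form a complex conjugate pair with real part $-\nu|\xi|^2/2$, while when it is positive both roots are real negative of order $|\xi|^2$. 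This is the spectral source of the heat-type decay. Since the eigenvectors become ill-conditioned at frequencies where the matrix approaches a non-trivial Jordan block, however, I would avoid direct diagonalization and instead work with a Lyapunov functional in the Fourier variable.

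Specifically, I would use the natural energy
\[
E_0(t,\xi):=(\gm^2+\kappa|\xi|^2)|\widehat{a}|^2+|c|^2\simeq \langle\xi\rangle^2|\widehat{a}|^2+|\widehat{m}_c|^2,
\]
whose coupling terms cancel thanks to the symmetric first-order interaction, yielding the identity $\tfrac12\pt_t E_0+\nu|\xi|^2|c|^2=(\gm^2+\kappa|\xi|^2)\Re(\widehat{f}\,\overline{\widehat{a}})+\Re(\widehat{g}_c\,\overline{c})$, with dissipation only on $c$. To recover dissipation on $\widehat{a}$, I would add the cross-term correction $2\eta|\xi|\,\mathrm{Im}(c\overline{\widehat{a}})$ for $\eta>0$ small: its time derivative contributes the crucial damping $-2\eta(\gm^2+\kappa|\xi|^2)|\xi|^2|\widehat{a}|^2$ together with a controllable $+2\eta|\xi|^2|c|^2$ (absorbed into the existing $2\nu|\xi|^2|c|^2$) and an indefinite piece $-2\eta\nu|\xi|^3\mathrm{Im}(c\overline{\widehat{a}})$ controllable by weighted Cauchy--Schwarz using $|\xi|^4\le\kappa^{-1}(\gm^2+\kappa|\xi|^2)|\xi|^2$. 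The main obstacle is precisely the uniform-in-$\xi$ calibration of $\eta$: one must arrange that $E:=E_0+2\eta|\xi|\mathrm{Im}(c\overline{\widehat{a}})$ remains equivalent to $E_0$ (which is automatic once $\eta\,|\xi|/\sqrt{\gm^2+\kappa|\xi|^2}\le \eta\,\kappa^{-1/2}$ is small) while the good damping $\eta(\gm^2+\kappa|\xi|^2)|\xi|^2|\widehat{a}|^2+(\nu-\eta)|\xi|^2|c|^2$ absorbs the indefinite cross-term at all frequencies simultaneously, which forces $\eta$ small in terms of $\nu,\gm,\kappa$. Once this is done, the resulting Lyapunov inequality
\[
\pt_t E+2\delta_0|\xi|^2 E \lesssim \sqrt{E}\,\big(\langle\xi\rangle|\widehat{f}|+|\widehat{g}_c|\big)
\]
combined with the elementary equivalence $\sqrt{E}\simeq |(\langle\xi\rangle\widehat{a},\widehat{m}_c)|$ and a standard Gronwall argument delivers the stated bound on the compressible block; together with the solenoidal heat estimate from the first step, this completes the proof.
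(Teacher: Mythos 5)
Your proposal is correct and follows the standard approach used in the cited references \cite{Ch-Da-Xu,Ch-Ko,Na2} (the paper itself does not reproduce a proof of this proposition, only cites it). The Helmholtz decomposition of $\widehat m$ plus a Fourier-side Lyapunov functional with a cross-term correction is exactly the technique employed there, originating in Danchin's energy method for the linearized barotropic Navier--Stokes system. Your bookkeeping checks out: the coupling terms in $E_0=(\gm^2+\kappa|\xi|^2)|\widehat a|^2+|c|^2$ do cancel identically by the symmetric structure of the first-order coupling; the correction $2\eta|\xi|\,\mathrm{Im}(c\overline{\widehat a})$ produces the damping $-2\eta(\gm^2+\kappa|\xi|^2)|\xi|^2|\widehat a|^2$ together with the benign $+2\eta|\xi|^2|c|^2$ and the cross term $-2\eta\nu|\xi|^3\mathrm{Im}(c\overline{\widehat a})$; the latter is absorbed uniformly in $\xi$ via $\kappa|\xi|^2\le\gm^2+\kappa|\xi|^2$, which forces the condition $\eta\lesssim\kappa/\nu$; and the equivalence $E\simeq E_0$ is guaranteed once $\eta\kappa^{-1/2}$ is small. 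Dividing the differential inequality by $\sqrt E$ and applying Gronwall produces the claimed Duhamel bound with $\delta_0=\min(\mu,\nu-2\eta,\eta)$, and since $\gm^2+\kappa|\xi|^2\simeq\langle\xi\rangle^2$ (both $\gm,\kappa$ being positive constants), $\sqrt{E_0}\simeq|(\langle\xi\rangle\widehat a,\widehat m_c)|$, so recombining with the solenoidal heat estimate gives the stated inequality. In short, this is the correct argument and there is no gap, though a polished write-up would spell out the weighted Cauchy--Schwarz absorption explicitly rather than in outline form.
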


\begin{prop}[{\it Regularizing estimates for \eqref{eqn;L}} \cite{Na2}\hspace{0mm}] \label{prop;MR3}
Let $s \in \R$, $1 \le p,\s \le \infty$, $1 \le r_1 \le r \le \infty$ and $I=(0,T)$ with $T \in \R_+$. 
%Suppose that $u_0 \in \fB^s_{p,q}$ and $f \in \wt{L^{r_1}(0},T;\wh{\dB}_{p,q}^{s-2+\frac{2}{r_1}})$. 
The inhomogeneous problem \eqref{eqn;L} has a unique solution $(\r,u,B)$ and 
there exists some constant $C >0$ such that 
\begin{equation*} 
\|(\langle\N\rangle a,m)\|_{\wt{L^r(I;}\wh{\dB}_{p,\s}^{s+\frac{2}{r}})}
\le C \left(\|\langle\N\rangle a_0,m_0)\|_{\wh{\dB}_{p,\s}^s} 
      +\|(\langle\N\rangle f,g)\|_{\wt{L^{r_1}(I;}\wh{\dB}_{p,\s}^{s-2+\frac{2}{r_1}})} \right). 
\end{equation*} 
\end{prop}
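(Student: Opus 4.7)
The plan is to deduce the Chemin--Lerner estimate from the frequency-pointwise bound of Proposition~\ref{lem;pw_L} by mimicking the proof of heat maximal regularity (Lemma~\ref{lem;MR}). The pointwise bound already contains every ingredient specific to the Korteweg linearization: the weighted vector $(\langle\xi\rangle\wh a,\wh m)$ is dominated by the convolution of a heat-type kernel with effective diffusion $\delta_0$, acting both on $(\langle\xi\rangle\wh a_0,\wh m_0)$ and on the source $(\langle\xi\rangle\wh f,\wh g)$. Setting $U:=(\langle\N\rangle a,m)$, $U_0:=U|_{t=0}$ and $F:=(\langle\N\rangle f,g)$, the remaining work is a routine Littlewood--Paley plus Young's-inequality scheme.

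First, I would apply the multiplier $\wh\phi_j$ to the pointwise inequality. On $\supp\wh\phi_j$ one has $|\xi|^2\sim 2^{2j}$, so $e^{-t\delta_0|\xi|^2}\wh\phi_j(\xi)\le C\,e^{-ct\delta_0 2^{2j}}\wh\phi_j(\xi)$ for a suitable $c>0$. Taking the $L^{p'}_\xi$ norm yields the $j$-th block bound
\[
\|\wh\phi_j\wh U(t)\|_{L^{p'}}\le C\,e^{-ct\delta_0 2^{2j}}\|\wh\phi_j\wh U_0\|_{L^{p'}}+C\int_0^t e^{-c(t-\t)\delta_0 2^{2j}}\|\wh\phi_j\wh F(\t)\|_{L^{p'}}\,d\t.
\]
Next, take the $L^r(0,T)$ norm in time. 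The homogeneous piece gives $\|e^{-ct\delta_0 2^{2j}}\|_{L^r(0,T)}\le C\,2^{-2j/r}$, while the inhomogeneous piece is estimated via Young's convolution inequality with exponents $1+1/r=1/\rho+1/r_1$ (admissible precisely because $r_1\le r$), producing a gain $2^{-2j/\rho}$. The arithmetic identity $2/r-2/\rho=-2+2/r_1$ aligns the Fourier--Besov exponents: multiplying by $2^{(s+2/r)j}$ and then taking the $\ell^{\s}(\Z)$ norm recovers
\[
\|U\|_{\wt{L^r(I;}\wh{\dB}^{s+2/r}_{p,\s})}\le C\bigl(\|U_0\|_{\wh{\dB}^{s}_{p,\s}}+\|F\|_{\wt{L^{r_1}(I;}\wh{\dB}^{s-2+2/r_1}_{p,\s})}\bigr),
\]
which is the claimed estimate.

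Uniqueness follows at once by applying the same estimate to the difference of two solutions with vanishing data and source, while existence at each frequency is already encoded in the fiberwise $2\times 2$ ODE representation underlying Proposition~\ref{lem;pw_L}. I do not expect a serious obstacle here: Proposition~\ref{lem;pw_L} carries all the genuinely system-specific information—in particular the spectral analysis of the linearized symbol matrix which explains why the Bessel weight $\langle\xi\rangle$ must sit on $\wh a$ (accounting uniformly for the low-frequency regime where $\langle\xi\rangle\sim 1$ and the high-frequency regime where $\langle\xi\rangle\sim|\xi|$). After that, the argument is frequency-by-frequency independent of the Korteweg structure. The only mild points requiring care are the endpoint cases $r_1=1$ or $r=\infty$ in Young's inequality and the fact that the $\ell^{\s}$ summation sits \emph{outside} the time norm; the latter is exactly the defining feature of the Chemin--Lerner spaces and is automatically compatible with the blockwise estimation above.
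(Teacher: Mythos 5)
Your argument is correct and is the standard reduction used for such propositions: Proposition~\ref{lem;pw_L} supplies a heat-type pointwise damping bound for the vector $(\langle\xi\rangle\wh a,\wh m)$, and the Littlewood--Paley blockwise $L^{p'}_\xi$ estimate combined with Young's inequality in time (exponent relation $1+1/r=1/\rho+1/r_1$, admissible since $r_1\le r$, giving the gain $2^{-2j/\rho}$ that aligns with the exponent $s-2+2/r_1$) then yields the Chemin--Lerner bound after the $\ell^\sigma$ summation. The paper itself states the proposition without proof, citing \cite{Na2}; your filling-in is a faithful reconstruction of that argument and matches the parallel treatment given for the heat equation in Lemma~\ref{lem;MR}.
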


On the other hand, by solving the wave equation with strong damping obtained by the homogeneous problem, 
namely \eqref{eqn;L} with $f$, $g\equiv 0$ (see, e.g. Lemma 3.1 in \cite{Ho-Zu} or \S\,3 in \cite{Ko-Ts}), we see that 
the solution $(a,m)$ of the homogeneous problem can be written down explicitly as followings: 
\begin{align*}
&\wh{a}(t,\xi) 
=\frac{\lam_+(\xi)e^{\lam_-(\xi)t}-\lam_-(\xi)e^{\lam_+(\xi)t}}{\lam_+(\xi)-\lam_-(\xi)} \wh{a}_0(\xi)
  -i\,{}^t\xi \frac{e^{\lam_+(\xi)t}-e^{\lam_-(\xi)t}}{\lam_+(\xi)-\lam_-(\xi)} \wh{m}_0(\xi), \\
&\wh{m}(t,\xi)
= e^{-\mu|\xi|^2t}\wh{m}_0(\xi)-i\xi(\gm^2+\kappa|\xi|^2)\frac{e^{\lam_+(\xi)t}-e^{\lam_-(\xi)t}}{\lam_+(\xi)-\lam_-(\xi)}\wh{a}_0(\xi) \\
&\qquad \qquad \qquad \qquad \quad  +\left(\frac{\lam_+(\xi)e^{\lam_+(\xi)t}-\lam_-(\xi)e^{\lam_-(\xi)t}}{\lam_+(\xi)-\lam_-(\xi)}-e^{-\mu|\xi|^2t}\right) 
  \frac{\xi(\xi\cdot\wh{m}_0(\xi))}{|\xi|^2}, 
\end{align*}
where $\lam_\pm(\xi)=-\frac{\nu}{2}|\xi|^2(1 \pm \sqrt{1-\frac{4\kappa}{\nu^2}-\frac{4 \gm^2}{\nu^2|\xi|^2}})$ are the solution of the corresponding characteristic equation 
$\lam^2+\nu|\xi|^2\lam+|\xi|^2(\gm^2+\kappa |\xi|^2)=0$.  
Now, we set $\mathcal{G}^{i,j}=\mG^{i,j}(t,\xi)$ $(i,j=1,2)$ as followings: 
%We denote $\mG=\mG(t,\xi)$ by 
\begin{equation}  \label{eqn;mg}
\begin{aligned}
&\mG^{1,1}(t,\xi):=\frac{\lam_+(\xi)e^{\lam_-(\xi)t}-\lam_-(\xi)e^{\lam_+(\xi)t}}{\lam_+(\xi)-\lam_-(\xi)}, \quad 
\mG^{1,2}(t,\xi):=i\,{}^t\xi \frac{e^{\lam_+(\xi)t}-e^{\lam_-(\xi)t}}{\lam_+(\xi)-\lam_-(\xi)}, \\
&\mG^{2,1}(t,\xi):={-i\xi(\gm^2+\kappa|\xi|^2)\frac{e^{\lam_+(\xi)t}-e^{\lam_-(\xi)t}}{\lam_+(\xi)-\lam_-(\xi)}}, \\
&\mG^{2,2}(t,\xi):={\frac{\lam_+(\xi)e^{\lam_+(\xi)t}-\lam_-(\xi)e^{\lam_-(\xi)t}}{\lam_+(\xi)-\lam_-(\xi)} \cdot \frac{\xi{}^t\xi}{|\xi|^2}
                             +e^{-\mu|\xi|^2t}\left(\text{Id}-\frac{\xi{}^t\xi}{|\xi|^2}\right)},   
\end{aligned}
\end{equation}
where ${}^t \xi$ is the transpose of $\xi$. 
Hereafter, we also set $G^{i,j}(t,x):=\F^{-1}[\mG^{i,j}(t,\cdot)]$ $(i,j=1,2)$ and 
\begin{align*}
\mG(t,\xi)
:=
\left[
\begin{array}{@{\,}cc@{\,}}  
\mG^{1,1}(t,\xi) &\mG^{1,2}(t,\xi) \\
\mG^{2,1}(t,\xi) &\mG^{2,2}(t,\xi)
\end{array}
\right], \quad 
G(t,x):=
\left[
\begin{array}{@{\,}cc@{\,}}  
G^{1,1}(t,x) &G^{1,2}(t,x) \\
G^{2,1}(t,x) &G^{2,2}(t,x)
\end{array}
\right].  
%=&
%\left[
%\begin{array}{@{\,}cc@{\,}}  
%\frac{\lam_+(\xi)e^{\lam_-(\xi)t}-\lam_-(\xi)e^{\lam_+(\xi)t}}{\lam_+(\xi)-\lam_-(\xi)}
%&i\,{}^t\xi \frac{e^{\lam_+(\xi)t}-e^{\lam_-(\xi)t}}{\lam_+(\xi)-\lam_-(\xi)}\\
%{-i\xi(\gm^2+\kappa|\xi|^2)\frac{e^{\lam_+(\xi)t}-e^{\lam_-(\xi)t}}{\lam_+(\xi)-\lam_-(\xi)}}
%&{\frac{\lam_+(\xi)e^{\lam_+(\xi)t}-\lam_-(\xi)e^{\lam_-(\xi)t}}{\lam_+(\xi)-\lam_-(\xi)}+e^{-\mu|\xi|^2t}\left(1-\frac{\xi{}^t\xi}{|\xi|^2}\right)}
%\end{array}
%\right]. 
%\left[
%\begin{array}{@{\,}cc@{\,}}  
%\displaystyle{\frac{\lam_+(\xi)e^{\lam_-(\xi)t}-\lam_-(\xi)e^{\lam_+(\xi)t}}{\lam_+(\xi)-\lam_-(\xi)}}
%&\displaystyle{i\,{}^t\xi \frac{e^{\lam_+(\xi)t}-e^{\lam_-(\xi)t}}{\lam_+(\xi)-\lam_-(\xi)}}\\
%\displaystyle{-i\xi(\gm^2+\kappa|\xi|^2)\frac{e^{\lam_+(\xi)t}-e^{\lam_-(\xi)t}}{\lam_+(\xi)-\lam_-(\xi)}}
%&\displaystyle{\frac{\lam_+(\xi)e^{\lam_+(\xi)t}-\lam_-(\xi)e^{\lam_-(\xi)t}}{\lam_+(\xi)-\lam_-(\xi)}+e^{-\mu|\xi|^2t}\left(1-\frac{\xi{}^t\xi}{|\xi|^2}\right)}
%\end{array}
%\right]. 
\end{align*}
%and the solution $(a,m)$ of \eqref{eqn;L} is 
%\begin{align*}
%\left(
%\begin{array}{@{\,}c@{\,}}
%a\\
%m
%\end{array}
%\right)(t,x)
%= G(t,\cdot)*
%\left(
%\begin{array}{@{\,}c@{\,}}
%a_0\\
%m_0
%\end{array}
%\right)
%:= \mathcal{F}^{-1}[]
%\end{align*}
Let $U(t):={}^t(a,m)$ be solution to \eqref{eqn;mNSK2} with $U_0:={}^t(a_0,m_0)$. 
Since the solution of \eqref{eqn;L} is given by Green matrix $G$, we see that $U(t)$ is represented by 
\begin{equation} \label{eqn;IE}
\begin{aligned}
U(t)=&G(t,\cdot)*U_0
+\int_0^t 
G(t-\t,\cdot)* 
\left(
\begin{array}{@{\,}c@{\,}}
0\\
N(a,m)
\end{array}
\right)
d\t \\ 
=& 
\left(
\begin{array}{@{\,}c@{\,}}
G^{1,1}(t,\cdot)*a_0+G^{1,2}(t,\cdot)*m_0+\dsp \int_0^t G^{1,2}(t-\t,\cdot)*N(a,m)\,d\t\\
\dsp G^{2,1}(t,\cdot)*a_0+G^{2,2}(t,\cdot)*m_0+\int_0^t G^{2,2}(t-\t,\cdot)*N(a,m)\,d\t
\end{array}
\right). 
\end{aligned}
\end{equation}

The following lemma is shown by Kobayashi-Tsuda \cite{Ko-Ts}. The strategy of the proof is based on 
the {\it stationary phase methods} on the sphere $\mathbb{S}^{d-1}$ and the representation formula of the wave equation as in Shibata \cite{Sh} and 
Kobayashi-Shibata \cite{Ko-Sh}: 
\begin{lem}[The pointwise estimate in the low frequencies \cite{Ko-Ts}] \label{lem;DW}
Let $k=0,1,2,\cdots$,  $\al \in (\mathbb{N} \cup \{0\})^d$ 
and $\psi \in C^\infty(\mathbb{S}^{d-1})$.  The function $K_{\psi, L}(t,\xi)$ is defined by 
\begin{equation} \label{eqn;kpsi}
K_{\psi,L}(t,x)
:=\F^{-1}\left[\chi_L(\xi) \wh{K_\psi(t,\cdot)} \right] \;\; with \;\; 
K_{\psi}(t,x):=\F^{-1}\left[\frac{e^{\lam_+(\xi)t}-e^{\lam_-(\xi)t}}{\lam_+(\xi)-\lam_-(\xi)}\psi(\xi)\right]. 
\end{equation}
%\quad with \quad 
%\chi_L(\xi):=
%\begin{cases} 
%1, 
%\end{cases}
Then there exists a positive constant $C$ such that for all $t>0$, 
\begin{align*}
\|\pt_t^k \pt_x^\al K_{\psi,L}(t,\cdot)\|_{L^\infty}
\le C(1+t)^{-(\frac{3d-3}{4}+\frac{k+|\al|}{2})} 
\end{align*}
(for the definition of the low-frequency cut-off $\chi_L(\xi)$, see \S\,3 in Kobayashi-Tsuda \cite{Ko-Ts}). 
\end{lem}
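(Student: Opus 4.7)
The plan is to exploit the product structure of the symbol, which splits into a parabolic damping and a dispersive oscillation, and then combine stationary phase on $\mathbb{S}^{d-1}$ with the Gaussian decay along the radial direction. Derivatives in $t$ and $x$ pull down factors of $\lam_\pm(\xi)$ and $\xi^{\al}$ respectively; each such factor is paired against the Gaussian $e^{-\delta_0|\xi|^2 t}$ to produce $t^{-(k+|\al|)/2}$ (by Lemma \ref{lem;cineq}-type bounds), so I would reduce matters to the base case $k=|\al|=0$.

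First, on the low-frequency support of $\chi_L$ the discriminant in the formula for $\lam_\pm$ is negative, so $\lam_\pm(\xi)=-\tfrac{\nu}{2}|\xi|^2\pm i\,\omega(|\xi|)$ with $\omega(r):=r\sqrt{\gm^2+(\kappa-\nu^2/4)r^2}=\gm r+O(r^3)$; in particular
\[
\frac{e^{\lam_+(\xi)t}-e^{\lam_-(\xi)t}}{\lam_+(\xi)-\lam_-(\xi)}=\frac{\sin(\omega(|\xi|)t)}{\omega(|\xi|)}\,e^{-\frac{\nu}{2}|\xi|^2 t}
\]
is smooth across $\xi=0$, and the $\sin/\omega$ factor is dominated by $\min(t,|\xi|^{-1})$. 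Passing to polar coordinates $\xi=r\sigma$, and writing $x=\rho\theta$ with $\rho=|x|$ and $\theta\in\mathbb{S}^{d-1}$, one obtains
\[
K_{\psi,L}(t,x)=C\!\int_0^\infty\!\chi_L(r)\,r^{d-1}\,\frac{\sin(\omega(r)t)}{\omega(r)}e^{-\frac{\nu}{2}r^2 t}\,I(r\rho;\theta)\,dr,\quad I(z;\theta):=\!\int_{\mathbb{S}^{d-1}}\!e^{iz\theta\cdot\sigma}\psi(\sigma)\,d\sigma.
\]
As in Shibata \cite{Sh} and Kobayashi-Shibata \cite{Ko-Sh}, the antipodal critical points $\sigma=\pm\theta$ of $\sigma\mapsto\theta\cdot\sigma$ are non-degenerate, so stationary phase on $\mathbb{S}^{d-1}$ yields $|I(z;\theta)|\lesssim\langle z\rangle^{-(d-1)/2}$, with principal contribution the pair of oscillations $e^{\pm iz}z^{-(d-1)/2}$ modulated by smooth symbols.

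It then remains to bound the resulting 1D radial integral. Off the wave cone, the total radial phase $\omega(r)t\pm r\rho$ is non-stationary, and repeated integration by parts against it produces arbitrary polynomial decay in $\langle t\rangle^{-1}+\langle\rho\rangle^{-1}$. On the wave cone $\rho\sim t$ the Gaussian $e^{-\nu r^2 t/2}$ pinches the effective support to the parabolic scale $r\sim t^{-1/2}$, and a direct majorization
\[
\|K_{\psi,L}(t,\cdot)\|_{L^\infty}\lesssim \int_0^{R_0}\! r^{d-1}\min(t,r^{-1})\langle r\rho\rangle^{-(d-1)/2}e^{-\nu r^2 t/2}\,dr
\]
is saturated at $r\sim t^{-1/2}$, $\rho\sim t$. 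There the three factors contribute respectively $t^{-d/2}$ (from the measure $r^{d-1}\,dr$ on the parabolic scale), $t^{-(d-1)/4}$ (from the angular stationary-phase gain $\langle rt\rangle^{-(d-1)/2}$), and $t^{+1/2}$ (from $1/\omega(r)\sim r^{-1}\sim t^{1/2}$ in the same regime), producing $t^{-(3d-3)/4}$.

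The main obstacle is the wave-cone regime $\rho\sim t$: here all three decay mechanisms (parabolic scaling in $r$, angular stationary phase on $\mathbb{S}^{d-1}$, and the $1/\omega$ factor) must be orchestrated at the single scale $r\sim t^{-1/2}$. The crude envelope above does not always give a clean argument; one must insert the full asymptotic expansion of $I(z;\theta)$ to expose the $e^{\pm ir\rho}$ factors, combine them with $e^{\pm i\omega(r)t}$, and then perform a one-dimensional stationary-phase step in $r$. Controlling the $O(r^3)$ correction in $\omega(r)$ at the parabolic scale, so that it contributes only a negligible error to the phase $\omega(r)t\pm r\rho$, is the most delicate point.
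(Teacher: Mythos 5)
The paper does not prove Lemma \ref{lem;DW}: it imports it verbatim from Kobayashi--Tsuda \cite{Ko-Ts}, remarking only that the argument there rests on stationary phase over $\mathbb{S}^{d-1}$ and the representation formula for the wave equation as in Shibata \cite{Sh} and Kobayashi--Shibata \cite{Ko-Sh}. Your sketch follows exactly that announced strategy (split $\frac{\sin(\omega(|\xi|)t)}{\omega(|\xi|)}e^{-\nu|\xi|^2t/2}$, angular stationary phase giving $\langle r\rho\rangle^{-(d-1)/2}$, non-stationary radial IBP off the cone, and parabolic scaling $r\sim t^{-1/2}$ on the cone $\rho\sim t$), and your balancing correctly yields the claimed rate $t^{-(3d-3)/4}$.
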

%
%\begin{rem}
%decay rate of linearized solution. 
%\end{rem}
%

%%%%%%%%%%%%%%%%%%%%%%%%%%%%%%%%%%%%%%%%%%%
%  Section 4 GWP for Hall-MHD
%%%%%%%%%%%%%%%%%%%%%%%%%%%%%%%%%%%%%%%%%%%
\sect{Global well-posedness in the critical Fourier--Besov spaces} \label{sect;GWP}
In this section, we give the proof of Theorem \ref{thm;GWP_FLp}.  
In order to prove Theorem \ref{thm;GWP_FLp}, we apply Banach's fixed point argument. 
Let us define the complete metric space $CL_T$ and the solution mapping $\Phi=\Phi[(b,n)]$ as follows:  
Given initial data such that $(a_0,m_0)$ satisfy the assumptions as in Theorem \ref{thm;GWP_FLp}, 
the solution mapping $\Phi$ is defined by 
\begin{equation}
\Phi: (b,n) \longmapsto (a,m)
\end{equation} 
with $(a,m)$ the solution to 
\begin{equation} \label{eqn;AE} 
\left\{
\begin{aligned}
&\;\pt_t a+\div m=0, \\
%&(t,x) \in \R_+\times\R^3, \\
&\;\pt_t m-\L m +\gm^2 \N a-\kappa\N \Del a =N(b,n), \\
&\;(a, m)|_{t=0}=(a_0,m_0), %&x \in \R^3.  
\end{aligned}
\right.
\end{equation}
where $N(b,n)=N(a,m)|_{(a,m)=(b,n)}$ are defined in \eqref{eqn;nonlin}. 
%In what follows, we set 
%\begin{align} \label{eqn;I}
%I_m(\s):=\frac{1}{(1+\s)^m}-1 \;(m=1,2,3), \quad 
%I_P(\s):=\frac{P'(1+\s)}{1+\s}-1.   
%\end{align}

On the other hand, let us introduce the following Banach space $CL_T$ for any $T>0$: 
$$
CL_T:=\wt{\,L^\infty(0},T;\cfB^{-1+\frac{d}{p}})
      \cap L^1(0,T;\cfB^{1+\frac{d}{p}})
$$
And, let the above space $CL_T$ be equipped with the norm 
$$
\|(a,m)\|_{CL_T}
:=\|(\langle\N\rangle a,m)\|_{\wt{L^\infty_T(}\cfB^{-1+\frac{d}{p}})}
+\||\N|^2(\langle\N\rangle a,m)\|_{L^1_T(\cfB^{-1+\frac{d}{p}})}.  
$$

\begin{lem} \label{lem;nonlin_est}
Let $1 \le p \le \infty$ and $T>0$. Let $N$ be defined in \eqref{eqn;nonlin} and 
$P$ be a real analytic function in a neighborhood of $1$ such that $P'(1)>0$. 
There exists some positive constant $C>0$ such that 
\begin{equation} \label{est;nonlin_all}
\begin{aligned}
\|N(b,n)\|_{L^1_t(\cfB^{-1+\frac{d}{p}})}
\le C(\|(b,n)\|_{CL_T}^2+\|(b,n)\|_{CL_T}^3)
\end{aligned}
\end{equation}
for all $(b,n) \in CL_T$ and $0 \le t \le T$. 
\end{lem}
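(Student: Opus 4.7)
The plan is to split the nonlinearity into
\[
N(b,n) = \underbrace{\div\bigl((I(b)-1)\,n\otimes n\bigr)}_{N_1} + \underbrace{-I_P(b)\,\N b}_{N_2} + \underbrace{\mathcal L(I(b)\,n)}_{N_3} + \underbrace{\div\mathcal K(b)}_{N_4},
\]
and estimate each piece in $L^1_T(\cfB^{-1+d/p})$ by first absorbing the outer divergence or Laplacian (gaining one or two indices via Bernstein) so that the remaining product sits in $\cfB^{d/p}$ or $\cfB^{1+d/p}$, and then applying the Banach ring property \eqref{est;Banach_ring} together with the product estimates of Lemmas~\ref{lem;bil}--\ref{lem;prod_est}. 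Set $X:=\|(b,n)\|_{CL_T}$. Before starting, I record that the $\langle\N\rangle$-weight in $CL_T$ gives $b\in\wt{L^\infty_T}(\cfB^{d/p})$ together with $b\in L^1_T(\cfB^{1+d/p})$ and, at high frequency, $b\in L^1_T(\cfB^{2+d/p})$, while $n\in\wt{L^\infty_T}(\cfB^{-1+d/p})\cap L^1_T(\cfB^{1+d/p})$; Chemin--Lerner interpolation in time then yields $n\in L^2_T(\cfB^{d/p})$ and $b\in L^2_T(\cfB^{1+d/p})$ with norms $\le CX$. Since $I(0)=I_P(0)=0$ (using $\gm^2=P'(1)$) and both are real analytic near $0$, a power-series argument based on \eqref{est;Banach_ring} supplies the composition bound $\|I(b)\|_{\wt{L^\infty_T}(\cfB^{d/p})}+\|I_P(b)\|_{\wt{L^\infty_T}(\cfB^{d/p})}\le C\|b\|_{\wt{L^\infty_T}(\cfB^{d/p})}\le CX$, provided $\ve_0$ is small.

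For $N_1$, I write $I(b)-1=-1+I(b)$: the $-1$ piece gives the purely quadratic bound $\|\div(n\otimes n)\|_{L^1_T(\cfB^{-1+d/p})}\le C\|n\|_{L^2_T(\cfB^{d/p})}^2\le CX^2$, while the $I(b)$ piece contributes $CX^3$ after factoring out the composition estimate. For $N_2$, analyticity gives $I_P(b)=b\,\wt I_P(b)$ with $\wt I_P$ analytic, so $I_P(b)\N b=\tfrac12\wt I_P(b)\,\N(b^2)$ and
\[
\|I_P(b)\N b\|_{L^1_T(\cfB^{-1+d/p})}\le C\|\wt I_P(b)\|_{\wt{L^\infty_T}(\cfB^{d/p})}\|b\|_{L^2_T(\cfB^{d/p})}^2\le C(X^2+X^3).
\]
For $N_3$, since $\mathcal L$ is second order,
\[
\|\mathcal L(I(b)n)\|_{L^1_T(\cfB^{-1+d/p})}\le C\|I(b)n\|_{L^1_T(\cfB^{1+d/p})}\le C\bigl(\|b\|_{\wt{L^\infty_T}(\cfB^{d/p})}\|n\|_{L^1_T(\cfB^{1+d/p})}+\|b\|_{L^2_T(\cfB^{1+d/p})}\|n\|_{L^2_T(\cfB^{d/p})}\bigr)
\]
by \eqref{est;prod_1-1} and the composition bound, giving $CX^2$ plus a $CX^3$ correction from $I(b)-b$. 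For $N_4$, a direct computation using $\Del\r^2=2\r\Del\r+2|\N\r|^2$ collapses $\div\mathcal K(1+b)$ to $\kappa(1+b)\N\Del b$; subtracting the linear $\kappa\N\Del b$, the nonlinear residue $\kappa b\,\N\Del b$ satisfies
\[
\|b\,\N\Del b\|_{L^1_T(\cfB^{-1+d/p})}\le C\|b\|_{\wt{L^\infty_T}(\cfB^{d/p})}\|b\|_{L^1_T(\cfB^{2+d/p})}\le CX^2
\]
via \eqref{est;prod_A-P2}. Summing the four contributions delivers the claim $C(X^2+X^3)$.

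The main obstacle, as I see it, is not any individual estimate but the bookkeeping of norms: one must match the time-integrability and the spatial regularity in each bilinear and trilinear product so that the interpolated intermediate norms of $b$ and $n$ line up, which in turn requires systematic use of the high-frequency $\cfB^{2+d/p}$ control of $b$ for $N_4$, the interpolated $L^2_T(\cfB^{1+d/p})$ for $N_3$, and the interpolated $L^2_T(\cfB^{d/p})$ for $N_1$--$N_2$. One must also be careful that for large $p$ (including the endpoint $p=\infty$) Lemma~\ref{lem;A-P} at $s=-1+d/p$ is only applied \emph{after} the outer derivative has been moved, so that the remaining product sits in a non-negative regularity space where \eqref{est;Banach_ring} applies. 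A secondary technical point is staying inside the radius of analyticity of $I$ and $I_P$ when expanding them as power series, which is guaranteed by the smallness $\|b\|_{\wt{L^\infty_T}(\cfB^{d/p})}\le CX\le C\ve_0\ll 1$ that the fixed-point scheme provides.
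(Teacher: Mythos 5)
Your decomposition of $N$ and your toolkit (Bernstein to absorb outer derivatives, Chemin--Lerner interpolation, power-series composition bounds, the product laws of Lemmas~\ref{lem;bil}--\ref{lem;prod_est}) are the same as the paper's, and $N_1$, $N_3$ go through. Your identity $\div\mathcal K(b)=\kappa\,b\,\N\Delta b$ is correct and differs from the paper, which instead bounds $\Delta b^2$, $|\N b|^2$ and $\N b\otimes\N b$ separately after pulling the outer $\div$ off by Bernstein. However, two of the displayed inequalities fail as stated, and both fail precisely because you do \emph{not} follow the precaution you yourself announce at the end: move the outer derivative \emph{before} applying a product law in $\cfB^{-1+d/p}$, so that the remaining product lives in $\cfB^{d/p}$ where \eqref{est;Banach_ring} holds for every $1\le p\le\infty$.

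For $N_2$: with $\wt I_P$ defined by $I_P(b)=b\,\wt I_P(b)$ one has $\wt I_P(0)=I_P'(0)=P''(1)\neq 0$ in general, so $\wt I_P(b)$ carries a nonzero constant. A nonzero constant does not belong to the homogeneous space $\cfB^{d/p}$ (its Fourier transform is $\propto\delta_0\notin L^1_{\mathrm{loc}}$), nor to $\wh L^\infty$, so the factor $\|\wt I_P(b)\|_{\wt{L^\infty_T}(\cfB^{d/p})}$ in your display is infinite and the inequality says nothing. Also, $\tfrac12\wt I_P(b)\N(b^2)$ is not a total gradient, so you cannot recover a $\cfB^{d/p}$-level product by Bernstein. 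The paper avoids both problems by writing $I_P(b)\N b=\N\bigl(b^2\wt I_P(b)\bigr)$ with $\wt I_P(b)=\sum_{n\ge 2}a_nb^{n-2}$ (an exact gradient), removing the outer $\N$ by Bernstein, and estimating $\|b^2\wt I_P(b)\|_{\cfB^{d/p}}\lesssim\|b\,\wt I_P(b)\|_{\cfB^{d/p}}\|b\|_{\cfB^{d/p}}$, where $b\,\wt I_P(b)$ vanishes at $b=0$ and is controlled by the power-series bound \eqref{est;I_P}. For $N_4$: you invoke \eqref{est;prod_A-P2} to bound $\|b\,\N\Delta b\|_{\cfB^{-1+d/p}}$ directly, but Lemma~\ref{lem;A-P} at $s=-1+d/p$ requires $|s|<d/p$ when $p\ge 2$, and this fails for $p\ge 2d$ — including the endpoint $p=\infty$ which the lemma must cover. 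To use your identity you must first put $b\,\N\Delta b$ back in divergence form (for instance $b\,\N\Delta b=\div(b\,\N^2 b)-\tfrac12\N|\N b|^2$), or simply absorb the $\div$ that is already present in $\div\mathcal K(b)$ as the paper does, so that the surviving products are estimated in $\cfB^{d/p}$.
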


\begin{proof}[The proof of Lemma \ref{lem;nonlin_est}] 
Firstly, we notice that it follows from the interpolation inequality that 
\begin{align} \label{est;interpo}
 \wt{\,L^2(0},T;\cfB^{\frac{d}{p}}) 
 \hr \wt{\,L^\infty(0},T;\cfB^{-1+\frac{d}{p}}) 
     \cap L^1(0,T;\cfB^{1+\frac{d}{p}})
%\quad\text{for all}\;\;1 \le r \le \infty
. 
\end{align}
\noindent
{\it The estimate for $\div ((I(b)-1)\,n \otimes n)$}: By Lemma \ref{lem;Bern} and \eqref{est;Banach_ring}, 
we see 
\begin{align} \label{est;nonlin1}
\|\div ((I(b)-1)\,n \otimes n)\|_{L^1_t(\cfB^{-1+\frac{d}{p}})}
\les \|(I(b)-1)\,n\otimes n\|_{L^1_t(\cfB^{\frac{d}{p}})}.  
\end{align}  
Thanks to the Taylor expansion, we see $(1+b)^{-1}=\sum_{n=0}^\infty (-1)^n b^n$ if $|b|<1$. 
By using \eqref{est;Banach_ring} and $\|b\|_{\wt{L^\infty_t(}\cfB^\frac{d}{p})} \le \ve_0 \ll 1$, 
we see that 
\begin{equation} 
\begin{aligned} \label{est;1/1+r}
\left\|\frac{b}{1+b}\right\|_{\wt{L^\infty_t(}\cfB^\frac{d}{p})} 
\le& \|b\|_{\wt{L^\infty_t(}\cfB^\frac{d}{p})} \sum_{n=0}^\infty \bigg(C\|b\|_{\wt{L^\infty_t(}\cfB^\frac{d}{p})}\bigg)^n \\
\le& \|b\|_{\wt{L^\infty_t(}\cfB^\frac{d}{p})} \sum_{n=0}^\infty \left(C\ve_0\right)^n
=\frac{1}{1-C\ve_0} \|b\|_{\wt{L^\infty_t(}\cfB^\frac{d}{p})}.  
\end{aligned}
\end{equation}
Combination of \eqref{est;nonlin1} and \eqref{est;1/1+r} gives us that 
\begin{align*}
\|\div ((I(b)-1)\,n \otimes n)\|_{L^1_t(\cfB^{-1+\frac{d}{p}})}
\les& \left(1+\left\|\frac{b}{1+b}\right\|_{L^\infty_t(\cfB^\frac{d}{p})}\right)
      \|n\otimes n\|_{L^1_t(\cfB^{\frac{d}{p}})} \\
\les& 
(1+\|b\|_{L^\infty_t(\cfB^\frac{d}{p})})\|n\|_{L^2_t(\cfB^{\frac{d}{p}})}^2 \\
\les& \|(b,n)\|_{CL_T}^2+\|(b,n)\|_{CL_T}^3. 
\end{align*}

%It follows from the above estimate \eqref{est;1/1+r} and Lemma \ref{lem;A-P} that 
%\begin{align*}
%\|I_1(\s)\L v\|_{L^1_t(\cfB^{-1+\frac{3}{p}})}
%\les\left\|\frac{\s}{1+\s}\right\|_{L^\infty_t(\cfB^\frac{3}{p})}
%     \|\L v\|_{L^1_t(\cfB^{-1+\frac{3}{p}})} 
%\les%\left\|\frac{1}{1+\s}\right\|_{L^\infty_t(\cfB^\frac{3}{p})}
%     \|\s\|_{L^\infty_t(\cfB^\frac{3}{p})}\|v\|_{L^1_t(\cfB^{1+\frac{3}{p}})}. 
%\end{align*}
\noindent
{\it The estimate for $I_P(b)\N b$}: 
Since $P$ is real analytic in a neighborhood of $1$, there exists some constant $R_P>0$ such that 
if $|b| \le R_P$ then 
$$
  P(b+1)=\sum_{n=0}^\infty a_n b^n \;\;\text{with}\;\;a_n=\frac{P^{(n)}(1)}{n!}. 
$$
Noting that $I_P(b)\N b=\N (b^2 \wt{I}_P(b))$ with $\wt{I}_P(b)=\sum_{n=2}^\infty a_n b^{n-2}$ 
and using \eqref{est;Banach_ring} and \eqref{est;1/1+r}, one can see that the following estimate for 
$\wt{I}_P(b)$ holds true:   
\begin{equation} \label{est;tlI_P}
\begin{aligned}
\|\wt{I}_P(b) b\|_{\wt{L^\infty_t(}\cfB^\frac{d}{p})}
&=\left\|\sum_{n=2}^\infty a_n b^{n-1}\right\|_{\wt{L^\infty_t(}\cfB^\frac{d}{p})} 
\les \bar{P}\left(C\|b\|_{\wt{L^\infty_t(}\cfB^\frac{d}{p})}\right) \|b\|_{\wt{L^\infty_t(}\cfB^\frac{d}{p})}, 
\end{aligned}
\end{equation}
where $\bar{P}(z):=\sum_{n=2}^\infty |a_n|z^n$. If $\|b\|_{\wt{L^\infty_t(}\cfB^\frac{d}{p})} \le \frac{R_P}{2C}$, 
it follows that  
\begin{align} \label{est;I_P}
\|\wt{I}_P(b) b\|_{\wt{L^\infty_t(}\cfB^\frac{d}{p})} \le D \|b\|_{\wt{L^r_t(}\cfB^\frac{d}{p})}
\;\;\text{with}\;\;D=1+\sup_{|z|\le\frac{R_P}{2}}|\bar{P}(z)|.
\end{align}
For any $b \in L^r_t(\cfB^{d/p}) \cap L^\infty_t(\cfB^{d/p})$, 
the same estimate holds in Bochner space $L^r_t(\cfB^{d/p})$: 
\begin{align} \label{est;I_P2}
\|\wt{I}_P(b)b\|_{L^r_t(\cfB^\frac{d}{p})} \les D\|b\|_{L^r_t(\cfB^\frac{d}{p})}
\;\;\text{with}\;\;D=1+\sup_{|z|\le\frac{R_P}{2}}|\bar{P}(z)|.
\end{align}
By \eqref{est;Banach_ring} and \eqref{est;I_P2}, we immediately obtain that 
\begin{align*}
\|\N(b^2 \wt{I}_P(b))\|_{L^1_t(\cfB^{-1+\frac{d}{p}})}
\les \|b^2 \wt{I}_P(b)\|_{L^1_t(\cfB^{\frac{d}{p}})}
\les \|\wt{I}_P(b)b\|_{L^\infty_t(\cfB^\frac{d}{p})}\|b\|_{L^2_t(\cfB^{\frac{d}{p}})}
\les \|(b,n)\|_{CL_T}^2. 
%\les \mX_p(t)^2. 
\end{align*}
\noindent
{\it The estimate for $\mathcal{L}(I(b)n)$}: Since 
$\Delta (I(b)n)=\div (\N I(b)\otimes n+I(b)\N n)$, it holds that 
\begin{align*}
\|\Delta (I(b)n)\|_{L^1_t(\cfB^{-1+\frac{d}{p}})}
\les& \|\N I(b)\otimes n\|_{L^1_t(\cfB^{\frac{d}{p}})}+\|I(b) \N n\|_{L^1_t(\cfB^{\frac{d}{p}})} \\
\les& \|\N I(b)\|_{L^2_t(\cfB^{\frac{d}{p}})}\|n\|_{L^2_t(\cfB^{\frac{d}{p}})}
      +\|I(b)\|_{L^\infty_t(\cfB^{\frac{d}{p}})} \|\N n\|_{L^1_t(\cfB^{\frac{d}{p}})}. 
\end{align*}
For $\N I(b)$, it follows from Lemma \ref{lem;bil} with $\wh{L}^\infty(\R^d) \hr \cfB^{d/p}(\R^d)$, \eqref{est;Banach_ring} that 
\begin{equation} \label{est;Na}
\begin{aligned}
\|\N I(b)\|_{L^2_t(\cfB^{\frac{d}{p}})}
%=\|\wt{I}(b) \N b\|_{L^2_t(\cfB^{\frac{d}{p}})}
=& \left\|\sum_{n=0}^\infty (-1)^n b^{n+1}\right\|_{L^2_t(\cfB^{1+\frac{d}{p}})} \\
     %\|\N b\|_{L^2_t(\cfB^{\frac{d}{p}})} \\
\les&  \|b\|_{L^2_t(\cfB^{1+\frac{d}{p}})} \sum_{n=0}^\infty \bigg(C\|b\|_{L^\infty_t(\cfB^\frac{d}{p})}\bigg)^n
\les \|\N b\|_{L^2_t(\cfB^{\frac{d}{p}})},  
\end{aligned}
\end{equation}
which yields to 
\begin{align*}
\|\Delta (I(b)n)\|_{L^1_t(\cfB^{-1+\frac{d}{p}})}
\les& \|\N b\|_{L^2_t(\cfB^{\frac{d}{p}})}\|n\|_{L^2_t(\cfB^{\frac{d}{p}})} \\
      &+\|b\|_{L^\infty_t(\cfB^{\frac{d}{p}})}\|\N n\|_{L^1_t(\cfB^{\frac{d}{p}})} 
\les \|(b,n)\|_{CL_T}^2. 
\end{align*}
Performing the same calculation as above estimate, one can obtain that 
\begin{align*}
\|\N \div (I(b)n)\|_{L^1_t(\cfB^{-1+\frac{d}{p}})} \les \|(b,n)\|_{CL_T}^2. 
\end{align*}

\noindent
{\it The estimate for $\div \mathcal{K}(b)$}: 
By the definition of $\mathcal{K}(b)$ in \eqref{eqn;Korteweg}, 
\eqref{est;Banach_ring} and \eqref{est;1/1+r}, we have 
\begin{align*}
\|\div \mathcal{K}(b)\|_{L^1_t(\cfB^{-1+\frac{d}{p}})}
\les& \|\Delta b^2\|_{L^1_t(\cfB^{\frac{d}{p}})}+\||\N b|^2\|_{L^1_t(\cfB^{\frac{d}{p}})}
      +\|\N b \otimes \N b\|_{L^1_t(\cfB^{\frac{d}{p}})} \\
\les& \|b\|_{L^\infty_t(\cfB^\frac{d}{p})}\|b\|_{L^1_t(\cfB^{2+\frac{d}{p}})}
+\|b\|^2_{L^2_t(\cfB^{1+\frac{d}{p}})} \\
\les& \|(b,n)\|_{CL_T}^2.   
%\les \mX_p(t)^2
\end{align*}
Gathering all, we thus obtain the desired estimate \eqref{est;nonlin_all}. 
\end{proof}

\begin{proof}[The proof of Theorem \ref{thm;GWP_FLp}]  
The proof readily follows from Propositon \ref{prop;MR3}, Lemma \ref{lem;nonlin_est} and the following auxiliary lemma 
(for the proof, see e.g., \cite{Ch-Ko}):
\begin{lem}[\cite{Ch-Ko}] \label{lem;Banach_fixed_pt}
Let $(X,\|\cdot\|_X)$ be a Banach space. 
Let $B:X \times X \to X$ be a bilinear continuous operator with norm $K_2$ and 
$T:X \times X \times X \to X$ be a trilinear operators with norm $K_3$. 
Let further $L:X \to X$ be a continuous linear operator with norm $N<1$. 
Then for all $y \in X$ such that 
$$
\|y\|_X <\min \left(\frac{1-N}{2},\frac{(1-N)^2}{2(2K_2+3K_3)}\right), 
$$
the equation $x=y+L(x)+B(x,x)+T(x,x,x)$ has a unique solution $x$ in the ball $B_{\tilde{R}}^X(0)$ of center $0$ and 
radius $\tilde{R}=\min (1,\frac{1-N}{2K_2+3K_3})$. In addition, $x$ satisfies 
$$
\|x\|_X \le \frac{2}{1-N}\|y\|_X. 
$$
\end{lem}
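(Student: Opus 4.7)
The plan is to recast the equation as a fixed-point problem $x = \Phi(x)$ for the self-map
\[
\Phi: X \to X, \qquad \Phi(x) := y + L(x) + B(x,x) + T(x,x,x),
\]
and to invoke Banach's contraction principle on an appropriate closed ball. The natural radius is $R := 2\|y\|_X/(1-N)$, which under the two smallness assumptions on $\|y\|_X$ satisfies $R < 1$ and $R < (1-N)/(2K_2+3K_3)$, so that $R < \tilde R$ and any fixed point of $\Phi$ in $\bar B_R^X(0)$ automatically lies in $B_{\tilde R}^X(0)$.

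First I would verify the self-map property. For $\|x\|_X \le R$ the estimate
\[
\|\Phi(x)\|_X \le \|y\|_X + N\|x\|_X + K_2\|x\|_X^2 + K_3\|x\|_X^3 \le \|y\|_X + NR + (K_2+K_3)R^2
\]
(using $R \le 1$ to bound $K_3R^3$ by $K_3R^2$) combined with the substitution $R = 2\|y\|_X/(1-N)$ reduces $\|\Phi(x)\|_X \le R$ to the inequality $(K_2+K_3)R^2 \le \|y\|_X$, i.e., $\|y\|_X \le (1-N)^2/(4(K_2+K_3))$. Since $4(K_2+K_3) \le 2(2K_2+3K_3)$, this is implied by the stated hypothesis.

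Second, on $\bar B_R^X(0)$ the standard multilinear telescoping identity $B(x_1,x_1)-B(x_2,x_2) = B(x_1-x_2,x_1)+B(x_2,x_1-x_2)$ and its trilinear analogue for $T$ yield
\[
\|\Phi(x_1)-\Phi(x_2)\|_X \le \bigl(N + 2K_2 R + 3K_3 R^2\bigr)\|x_1-x_2\|_X,
\]
and the bounds $R<1$ together with $(2K_2+3K_3)R < 1-N$ force the coefficient to be strictly below $1$. Banach's theorem then produces a unique fixed point $x \in \bar B_R^X(0)$ satisfying the a priori bound $\|x\|_X \le R = 2\|y\|_X/(1-N)$, and uniqueness in the larger open ball $B_{\tilde R}^X(0)$ is obtained by applying the analogous estimate to any two hypothetical solutions located there, where the strict inequalities $\|x_i\|_X < \tilde R$ keep the contraction coefficient strictly below $1$.

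The argument is conceptually standard, essentially a multilinear Picard scheme in the spirit of \cite{B-C-D}; the only delicate point is the numerical bookkeeping that secures both the self-map property and the strict contractivity under a single smallness threshold on $\|y\|_X$. This is exactly why the hypothesis takes the precise form $\min((1-N)/2, (1-N)^2/(2(2K_2+3K_3)))$: the first entry guarantees $R<1$, while the second provides the quadratic absorption used in the self-map step.
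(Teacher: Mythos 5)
Your proof is correct and follows the standard multilinear Picard contraction argument; the paper itself does not prove the lemma but cites \cite{Ch-Ko}, and the route you take (closed ball of radius $R=2\|y\|_X/(1-N)$, self-map via the quadratic absorption $(K_2+K_3)R^2\le\|y\|_X$, contraction via the telescoping identities, and a separate strict-inequality argument for uniqueness in the larger open ball $B_{\tilde R}^X(0)$) is precisely the expected one. The bookkeeping you carry out — in particular $4(K_2+K_3)\le 2(2K_2+3K_3)$ to reconcile your self-map threshold with the stated hypothesis, and the observation that $R<1$ lets you trade $R^3$ for $R^2$ and $R^2$ for $R$ — all checks out.
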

By Proposition \ref{prop;MR3} and Lemma \ref{lem;nonlin_est}, we see that $(a,m)$ satisfies 
\begin{equation} \label{est;pre_unif}
\begin{aligned}
\|(a,m)\|_{CL_T} 
\les& \|(\langle\N\rangle a_0,m_0)\|_{\cfB^{-1+\frac{d}{p}}}
      +\|N(b,n)\|_{L^1_t(\cfB^{-1+\frac{d}{p}})}.  %\\
%\les& \mathcal{X}_{p,0}+\|(b,n)\|_{CL_T}^2+\|(b,n)\|_{CL_T}^3
\end{aligned}
\end{equation}
Lemma \ref{lem;nonlin_est} ensures that $N(b,n)$ can be regarded as a combination of 
bi-and-trilinear continuous operators in $CL_T$ for any $T>0$. We define a ball in $CL_T$ centered at the origin by 
$$
B_R^{CL_T}(0):=\{(a,m) \in CL_T;\|(a,m)\|_{CL_T} \le R\}, 
$$ 
where $R>0$. Lemma \ref{lem;Banach_fixed_pt} shows that the existence of a unique solution in $B_R^{CL_T}(0)$ for a sufficiently small data. 
Moreover, we are able to take $T=\infty$, thanks to the uniform estimate with respect to $t$ in Lemma \ref{lem;nonlin_est}. 
Therefore, we obtain a global-in-time solution satisfying \eqref{est;unif}. 
The time-consinuity of $(a,m)$ follows from Proposition \ref{prop;MR3} and the density 
of $\mathcal{S}_0$ in $\cfB^s$, where $\mathcal{S}_0$ is the set of functions in $\mathcal{S}$ whose Fourier transforms are supported away from $0$. 
\end{proof}

%%%%%%%%%%%%%%%%%%%%%%%%%%%%%%%%%%%%%%%%%%%
%  Section 5 Time-decay estimates
%%%%%%%%%%%%%%%%%%%%%%%%%%%%%%%%%%%%%%%%%%%
\sect{The $L^p$-$L^1$ type decay estimates for global solutions with $1 \le p \le \frac{2d}{d-1}$} 
\label{sect;Lp-L1}

The following estimate is immediately derived from Proposition \ref{prop;MR3}, 
Lemma \ref{lem;nonlin_est} and standard argument for the cubic equation.  
We would like to state the assertion without its proof.  
\begin{prop}[{\it Uniform estimate}\hspace{0mm}] \label{prop;unif}
  Let $(a_0,m_0)$ satisfy the assumption of Theorem \ref{thm;GWP_FLp} 
  and $(a,m)$ be a solution to the problem \eqref{eqn;mNSK2}. 
  There exists a threshold number $j_0 \in \Z$ and some positive constant $C$ such that 
$$
  \mathcal{X}_p(t) \le C\mathcal{X}_{p,0} \quad for \;any \;\; t>0, 
$$ 
where $\mathcal{X}_p(t)$ and $\mathcal{X}_{p,0}$ are defined in Theorem \ref{thm;GWP_FLp}. 
\end{prop}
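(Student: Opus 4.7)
The plan is to close a Gr\"onwall-type inequality on $\mathcal{X}_p(t)$ via the linear maximal regularity of Proposition \ref{prop;MR3} and the trilinear nonlinear bound of Lemma \ref{lem;nonlin_est}, and then to extract uniform smallness in the cubic inequality through a continuity argument.

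First, applying Proposition \ref{prop;MR3} to system \eqref{eqn;mNSK2} with the nonlinear term $N(a,m)$ in \eqref{eqn;nonlin} playing the role of the source $(f,g)$, simultaneously in the two endpoint cases $(r,r_1)=(\infty,1)$ and $(r,r_1)=(1,1)$ (with $s=-1+d/p$), I obtain
\begin{equation*}
\|(\langle\N\rangle a,m)\|_{\wt{L^\infty_t}(\cfB^{-1+\frac{d}{p}})}
+\|(\langle\N\rangle a,m)\|_{L^1_t(\cfB^{1+\frac{d}{p}})}
\le C\|(\langle\N\rangle a_0,m_0)\|_{\cfB^{-1+\frac{d}{p}}}
+C\|N(a,m)\|_{L^1_t(\cfB^{-1+\frac{d}{p}})}.
\end{equation*}
Note that $\||\N|^2(\langle\N\rangle a,m)\|_{L^1_t(\cfB^{-1+d/p})}\lesssim\|(\langle\N\rangle a,m)\|_{L^1_t(\cfB^{1+d/p})}$ since the high-frequency norm controls two derivatives. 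Hence the left-hand side dominates $\mathcal{X}_p(t)$, so that
\begin{equation*}
\mathcal{X}_p(t)\le C\mathcal{X}_{p,0}+C\|N(a,m)\|_{L^1_t(\cfB^{-1+\frac{d}{p}})}.
\end{equation*}

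Next, by Lemma \ref{lem;nonlin_est} applied with $(b,n)=(a,m)$ (here I use that $\mathcal{X}_p(t)$ is equivalent to $\|(a,m)\|_{CL_t}$ thanks to the Bessel potential $\langle\N\rangle$ dominating both the low- and high-frequency parts), the forcing term is controlled by
\begin{equation*}
\|N(a,m)\|_{L^1_t(\cfB^{-1+\frac{d}{p}})}\le C\bigl(\mathcal{X}_p(t)^2+\mathcal{X}_p(t)^3\bigr).
\end{equation*}
Combining the two displays yields the cubic inequality
\begin{equation*}
\mathcal{X}_p(t)\le C_0\mathcal{X}_{p,0}+C_0\mathcal{X}_p(t)^2+C_0\mathcal{X}_p(t)^3,\qquad t>0,
\end{equation*}
with an absolute constant $C_0>0$ independent of $t$.

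Finally, I close the argument by a standard bootstrap. Since $t\mapsto\mathcal{X}_p(t)$ is continuous and non-decreasing with $\mathcal{X}_p(0)\lesssim\mathcal{X}_{p,0}$, choosing $\varepsilon_0$ sufficiently small so that $2C_0\varepsilon_0(1+2C_0\varepsilon_0)\le 1/2$, the set $\{t\ge 0:\mathcal{X}_p(t)\le 2C_0\mathcal{X}_{p,0}\}$ is both open and closed in $[0,\infty)$, hence equals $[0,\infty)$. This yields $\mathcal{X}_p(t)\le 2C_0\mathcal{X}_{p,0}$ for all $t>0$. The threshold index $j_0\in\Z$ is fixed once and for all at this stage (depending only on $\nu,\mu,\kappa,\gamma$) as the frequency level separating the parabolic-dominant high regime from the dispersive-dominant low regime in \eqref{eqn;mg}; it plays no role in the bound itself but is recorded here to be reused in the low-high decomposition underpinning the $L^p$-$L^1$ decay analysis of Theorem \ref{thm;Lp-L1}.

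The only non-routine point is ensuring that the two endpoint applications of Proposition \ref{prop;MR3} are compatible (same regularity index $s=-1+d/p$, same source norm $\wt{L^1_t}(\cfB^{-1+d/p})$) so that one arrives at a single closed inequality in $\mathcal{X}_p(t)$ rather than in a pair of coupled quantities; this is guaranteed by the choice $r_1=1$ in both instances, which matches exactly the norm delivered by Lemma \ref{lem;nonlin_est}.
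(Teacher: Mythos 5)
Your proposal is correct and follows exactly the route the paper has in mind: the authors explicitly state that Proposition \ref{prop;unif} ``is immediately derived from Proposition \ref{prop;MR3}, Lemma \ref{lem;nonlin_est} and standard argument for the cubic equation,'' and your combination of the maximal regularity bound with $(r,r_1)=(\infty,1)$ and $(1,1)$, the trilinear forcing estimate of Lemma \ref{lem;nonlin_est}, and a smallness closure of the resulting cubic inequality is precisely that. The only cosmetic difference is that the paper packages the cubic closure inside the Banach fixed-point Lemma \ref{lem;Banach_fixed_pt} in the proof of Theorem \ref{thm;GWP_FLp}, whereas you run an a priori continuity/bootstrap argument on $\mathcal{X}_p(t)$; both are standard and interchangeable here since global existence is already secured. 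One small precision worth making: the relation $\||\N|^2(\langle\N\rangle a,m)\|_{L^1_t(\cfB^{-1+d/p})}\simeq\|(\langle\N\rangle a,m)\|_{L^1_t(\cfB^{1+d/p})}$ is an equivalence for all dyadic blocks by Bernstein (Lemma \ref{lem;Bern}), not a ``high-frequency'' fact, so $\mathcal{X}_p(t)$ and $\|(a,m)\|_{CL_t}$ are genuinely equivalent norms, which is what you need to invoke Lemma \ref{lem;nonlin_est} with $(b,n)=(a,m)$.
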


In this section, we give the proof of Theorem \ref{thm;Lp-L1}. Firstly, 
let us introduce the following functional concerning the time-decay of global solutions for any $t>0$:  
\begin{equation}
\begin{aligned}
\mD_p(t)
:=&\sup_{s \in (-\frac{d}{p'},1+\frac{d}{p}]}
   \|\langle\t\rangle^{\frac{d}{2p'}+\frac{s}{2}}(a,m)\|_{L^\infty(I;\cfB^s)}^\ell \\
   &+\|\langle\t\rangle^\al (\N a,m)\|_{\wt{L^\infty(I};\cfB^{-1+\frac{d}{p}})}^h 
    +\|\t^\al(\N a,m)\|_{\wt{L^\infty(I};\cfB^{1+\frac{d}{p}})}^h, 
\end{aligned}
\end{equation}
where $\al:=d-\ve$ with sufficiently small constant $\ve>0$, $I:=(0,t)$.  
By Lemma \ref{lem;pw_L}, we immediately see that the global solution $(a,m)$ to \eqref{eqn;mNSK2} satisfies the 
following estimate: 
\begin{equation} \label{est;F-L}
\begin{aligned}
\big\|\big(\langle\xi\rangle\wh{a}_j,\wh{m}_j\big)(t,\xi)\big\|_{L^{p'}} 
\les& e^{-t\frac{\delta_0}{4}2^{2j}}
     \big\|\wh{\phi}_j\big(\langle\xi\rangle\wh{a}_0,\wh{m}_0\big)\big\|_{L^{p'}} \\
&+\int_0^t
     e^{-(t-s)\frac{\delta_0}{4}2^{2j}}
     \big\|\wh{\phi}_j \wh{N(a,m)}\big\|_{L^{p'}}
    ds, 
\end{aligned}
\end{equation}
where $N(a,m)$ is already defined in \eqref{eqn;nonlin}. 

\subsection{The time-weighted estimates for the low-frequency part}
Let us consider the boundedness for the first term of $\mD_p(t)$.   
For any $s>-d/{p'}$, we obtain by $|\xi| \les 2^{j_0}$ if $\xi \in \supp \wh{\phi}_{j_0}$ and Lemma \ref{lem;cineq} that 
\begin{equation} \label{est;heat}
\begin{aligned}
\langle t\rangle^{\frac{d}{2p'}+\frac{s}{2}}
&\sum_{j \le j_0}2^{sj}e^{-t\frac{\dl_0}{4}2^{2j}}\|\wh{\phi}_j(\langle\xi\rangle\wh{a}_0,\wh{m}_0)\|_{L^{p'}} \\
\les& \sum_{j \le j_0}
      (1+t^{\frac{d}{2p'}+\frac{s}{2}})2^{(s+\frac{d}{p'})j}
      2^{-\frac{d}{p'}j}e^{-tc_02^{2j}}\|\wh{\phi}_j(\wh{a}_0,\wh{m}_0)\|_{L^{p'}}\\
\les& \|(a_0,m_0)\|_{\fB_{p,\infty}^{-\frac{d}{p'}}}^\ell
      \left(1+\sum_{j\le j_0}(t^{\frac{1}{2}}2^j)^{s+\frac{d}{p'}}e^{-tc_02^{2j}}\right)
\les \mD_{p,0}. %\|(\r_0,u_0,B_0)\|_{\fB_{p,\infty}^{-\frac{3}{p}}}^\ell. 
\end{aligned}
\end{equation}
Here we set $c_0=\dl_0/4$.  
Therefore, we see that for any $s \in (-d/p',1+d/p]$, 
\begin{equation} \label{decay;low_ruB}
\begin{aligned}
\|(a,m)(t)\|_{\cfB^s}^\ell
\les \langle t\rangle^{-(\frac{d}{2p'}+\frac{s}{2})} \mD_{p,0}
    +\int_0^t\langle t-\t\rangle^{-(\frac{d}{2p'}+\frac{s}{2})}\|N(a,m)(\t)\|_{\fB_{p,\infty}^{-\frac{d}{p'}}}^\ell d\t. 
\end{aligned}
\end{equation}

In what follows, let us consider the estimate for the time-integral part. 
%\subsubsection{The time-weighted estimates for the low-frequency part
In order to complete such an estimation, we would like to introduce the following lemma: 
%As for the proof of the following lemma, see e.g. Lemma 5.2 in \cite{Na2}. 
\begin{lem}[Lemma 5.2 in \cite{Na2}] \label{lem;cA-P}
Let $d \ge 1$, $s_i \in \R$, $1 \le p,p_i \le \infty$ $(i=1,2)$ satisfying 
$$
\frac{1}{p}\le\frac{1}{p_1}+\frac{1}{p_2}, \quad s_1 \le \frac{d}{p_1}, \quad s_2 <\frac{d}{p_2}, \quad 
s_1+s_2+d\min\left(0,1-\frac{1}{p_1}-\frac{1}{p_2}\right) \ge 0. 
$$
Then there exists some constant $C>0$ such that 
\begin{align} \label{est;prod_diag}
\left\|fg\right\|_{\fB_{p,\infty}^{s_1+s_2+\frac{d}{p}-\frac{d}{p_1}-\frac{d}{p_2}}}
\le C\|f\|_{\fB_{p_1,1}^{s_1}}\|g\|_{\fB_{p_2,\infty}^{s_2}}. 
\end{align}
\end{lem}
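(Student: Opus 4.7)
The plan is to prove \eqref{est;prod_diag} via Bony's paraproduct decomposition transferred to the Fourier side, thinking of the physical product $fg$ as the convolution $\wh{f}*\wh{g}$. Precisely, I split
\[
\widehat{fg} \;=\; c_d\sum_j \widehat{S_{j-1}f}*\widehat{g_j} \;+\; c_d\sum_j \widehat{S_{j-1}g}*\widehat{f_j} \;+\; c_d\sum_j \widehat{f_j}*\widehat{\tilde g_j} \;=:\; \pi_1(f,g) + \pi_2(f,g) + \pi_3(f,g),
\]
and estimate the dyadic block $\wh\phi_j(fg)$ in $L^{p'}$ term by term, exploiting the usual spectral localization (the $k$-th summand in $\pi_1$ and $\pi_2$ is localized around $|\xi|\sim 2^k$, while the $k$-th summand in $\pi_3$ is localized in $|\xi|\lesssim 2^k$). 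The main analytic tool at the block level is Young's inequality on the Fourier side, $\|\wh u*\wh v\|_{L^{r'}} \lesssim \|\wh u\|_{L^{p_1'}}\|\wh v\|_{L^{p_2'}}$ valid for $1/r = 1/p_1 + 1/p_2$, combined with Bernstein's inequality (Lemma~\ref{lem;Bern}) to convert $\wh{L}^{r}$ into $\wh{L}^{p}$ at the cost of a favorable frequency factor $2^{d(1/p_1+1/p_2-1/p)k}$ whenever $1/p < 1/p_1+1/p_2$.

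For the low-high paraproduct $\pi_1(f,g)$, only $|k-j|\le N$ contributes. Using Bernstein to control $\|\wh{S_{k-1}f}\|_{L^{p_1'}}$ by a geometric sum of the dyadic pieces of $f$ weighted by $2^{(d/p_1 - s_1)\ell}$, the hypothesis $s_1 \le d/p_1$ is exactly what ensures that this tail is bounded by $\|f\|_{\fB_{p_1,1}^{s_1}}$, the endpoint $s_1=d/p_1$ being absorbed by the $\ell^1$ index in the norm on $f$. Multiplying by $2^{s_2 k}\|\wh{g_k}\|_{L^{p_2'}}$ and weighting by $2^{sj}$ with $s = s_1+s_2+d/p - d/p_1 - d/p_2$ produces, after collecting the Bernstein factors coming from the gap $1/p < 1/p_1+1/p_2$, a quantity pointwise dominated by $\|f\|_{\fB_{p_1,1}^{s_1}}$ times the $\ell^\infty$-sequence controlling $\|g\|_{\fB_{p_2,\infty}^{s_2}}$. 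Taking $\ell^\infty_j$ closes this piece. The high-low term $\pi_2(f,g)$ is treated symmetrically, except that the roles swap and the convergence of the geometric series over low frequencies of $g$ requires the strict inequality $s_2 < d/p_2$, which is exactly what the hypotheses supply.

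The remainder $\pi_3(f,g)$ is the most delicate step and is where I expect the main obstacle. The support condition forces $k \ge j - N$, so the output block at frequency $2^j$ gathers contributions from \emph{all} higher frequencies simultaneously. After Young's and Bernstein's inequalities, one obtains the block bound
\[
2^{sj}\|\wh\phi_j\,\pi_3(f,g)\|_{L^{p'}} \;\lesssim\; \sum_{k \ge j-N} 2^{[s_1+s_2 + d\min(0,\,1 - 1/p_1 - 1/p_2)](j-k)}\cdot 2^{s_1 k}\|\wh{f_k}\|_{L^{p_1'}} \cdot 2^{s_2 k}\|\wh{\tilde g_k}\|_{L^{p_2'}},
\]
and the sign condition $s_1 + s_2 + d\min(0, 1 - 1/p_1 - 1/p_2) \ge 0$ is precisely what guarantees that the exponent of $2^{(j-k)}$ is nonpositive so that the sum in $k$ is summable. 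A discrete convolution inequality $\ell^1 * \ell^\infty \hookrightarrow \ell^\infty$ applied to the $\ell^1$-sequence coming from $f$ and the $\ell^\infty$-sequence coming from $g$ then delivers the conclusion. The subtle bookkeeping of the Bernstein correction $d(1/p_1+1/p_2-1/p)$ and its interplay with the $d\min(0, 1-1/p_1-1/p_2)$ factor is the only technical hurdle; once those factors are tracked carefully, all three pieces fit together and the estimate closes.
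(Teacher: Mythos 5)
The paper does not prove this lemma itself; it simply cites Lemma~5.2 of~\cite{Na2}. So I cannot compare you against the paper's own proof, only against the argument one would expect~\cite{Na2} to run, which is the standard one and is exactly what you propose: Bony's paraproduct read on the Fourier side as a decomposition of $\wh{f}*\wh{g}$, with Young's inequality at the block level, Bernstein/H\"older corrections to shuffle between the Lebesgue exponents $p_1'$, $p_2'$, $p'$, and geometric tail sums controlled by the regularity assumptions. Your proposal is correct in both structure and detail, and in particular the block bound you write for the remainder $\pi_3$,
\begin{equation*}
2^{sj}\|\wh\phi_j\,\pi_3(f,g)\|_{L^{p'}} \lesssim \sum_{k \ge j-N} 2^{\left[s_1+s_2 + d\min\left(0,\,1 - \frac{1}{p_1} - \frac{1}{p_2}\right)\right](j-k)}\, 2^{s_1 k}\|\wh{f_k}\|_{L^{p_1'}} \, 2^{s_2 k}\|\wh{\tilde g_k}\|_{L^{p_2'}},
\end{equation*}
is the sharp one. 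Since you flag the bookkeeping as a possible obstacle, let me confirm that it does close: the way to see it is to insert a free intermediate exponent $c$ in the chain, apply H\"older on the dyadic supports (at scale $2^k$) to move $\wh{f_k},\wh{\tilde g_k}$ into $L^a,L^b$ with $a\le p_1'$, $b\le p_2'$, apply Young to land in $L^c$ with $\frac{1}{c}=\frac{1}{a}+\frac{1}{b}-1$, and finally H\"older on the output block (at scale $2^j$) to reach $L^{p'}$. The coefficients of $j$ and $k$ then sum to zero identically for any admissible $c$, and the exponent of $(j-k)$ equals $s_1+s_2+d\left(1-\frac{1}{p_1}-\frac{1}{p_2}-\frac{1}{c}\right)$. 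Minimizing $1/c$ subject to $1/c\ge 1-1/p_1-1/p_2$, $1/c\ge 0$ and $1/c\le 1-1/p$ (the last one is exactly where the hypothesis $1/p\le 1/p_1+1/p_2$ is used) gives $1/c=\max\bigl(0,\,1-1/p_1-1/p_2\bigr)$, which produces precisely the $d\min(0,1-1/p_1-1/p_2)$ correction in your display and hence reduces summability to the stated sign hypothesis. Your treatment of $\pi_1$ (using the $\ell^1$ norm on $f$ to absorb the endpoint $s_1=d/p_1$) and $\pi_2$ (needing the strict $s_2<d/p_2$ because $g$ only sits in $\ell^\infty$) is likewise correct; the only small imprecision is that the geometric weight $2^{\ell(d/p_1-s_1)}$ appears after converting the low-frequency tail into $\wh{L}^\infty$ (equivalently $L^1$ on the Fourier side), not while keeping it in $L^{p_1'}$, but this is exactly the Bernstein step you invoke and it does not affect the conclusion.
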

For all $1 \le p \le 2d/(d-1)$, Lemma \ref{lem;cA-P} gives us the following estimates:    
\begin{align}
&\|fg\|_{\fB_{p,\infty}^{1-\frac{d}{p'}}}
\les \|f\|_{\cfB^\frac{d}{p}}\|g\|_{\fB^{1-\frac{d}{p'}}_{p,\infty}} \label{est;p-1}\\
&\|fg\|_{\fB_{p,\infty}^{1-\frac{d}{p'}}}
\les \|f\|_{\cfB^{-1+\frac{d}{p}}}\|g\|_{\fB_{p,\infty}^{2-\frac{d}{p'}}}. \label{est;p-2}
\end{align}

\begin{prop}%[{\it The estimate for} $F^\ell$] 
\label{prop;decay_low_N} 
Let $s \in \R$ with $-d/p'<s\le 1+d/p$ and $1 \le p \le 2d/(d-1)$. 
Then there exists some constant $C>0$ such that for any $t > 0$, 
\begin{align*}
\int_0^t\langle t-\t\rangle^{-(\frac{d}{2p'}+\frac{s}{2})}\||\N|^{-1}N(a,m)(s)\|_{\fB_{p,\infty}^{1-\frac{d}{p'}}}^{\ell} d\t
\le C\langle t\rangle^{-(\frac{d}{2p'}+\frac{s}{2})}(1+\mX_p(t))(\mX_p(t)^2+\mD_p(t)^2). 
\end{align*}
\end{prop}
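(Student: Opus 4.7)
The plan is to apply the calculus Lemma~\ref{lem;ce} at the end and thereby reduce the proposition to a pointwise-in-$\tau$ bound of the form
\begin{equation*}
\||\N|^{-1}N(a,m)(\tau)\|_{\fB_{p,\infty}^{1-d/p'}}^{\ell}
\lesssim \langle\tau\rangle^{-b}\,(1+\mX_p(t))\bigl(\mX_p(t)^2+\mD_p(t)^2\bigr),
\end{equation*}
with $b$ independent of $\tau$ and satisfying $b\ge d/(2p')+s/2$ over the whole range $-d/p'<s\le 1+d/p$. The worst case $s=1+d/p$ forces $b\ge (d+1)/2$; the standing assumption $d\ge 2$ then secures $\max(d/(2p')+s/2,b)>1$, so Lemma~\ref{lem;ce} delivers the claimed rate $\langle t\rangle^{-(d/(2p')+s/2)}$.

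The body of the argument would decompose $N(a,m)$ into its four pieces from (2.6): the convective $\div((I(a)-1)m\otimes m)$, the pressure $\N(a^2\wt{I}_P(a))$, the viscous $\L(I(a)m)$ and the capillary $\div\mathcal K(a)$. Each carries an outer $\div$ or $\N$ that cancels $|\N|^{-1}$, so the task reduces to an $\fB_{p,\infty}^{1-d/p'}$-bound on the underlying tensor. I would treat each with the product inequalities~(5.6)-(5.7) (consequences of Lemma~\ref{lem;cA-P}), splitting every factor into its low- and high-frequency pieces at the threshold $j_0$ of Proposition~\ref{prop;unif}. The composite terms $I(a)-1$, $I_P(a)$, $\wt{I}_P(a)$ are controlled by their convergent power series in $a$, as in Section~4, under the smallness $\|a\|_{\cfB^{d/p}}\lesssim\mX_p(t)\ll 1$, which is what produces the overall factor $(1+\mX_p(t))$. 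High-frequency factors are then bounded uniformly by $\mX_p(t)$ or by the high-frequency part of $\mD_p(t)$ with its fast rate $\langle\tau\rangle^{-\alpha}$, $\alpha=d-\varepsilon$; low-frequency factors by the heat-like rate $\langle\tau\rangle^{-d/(2p')-s/2}\mD_p(t)$ built into $\mD_p$.

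The decisive case is the low-low interaction. Applied to the convective term (the others are analogous), (5.7) gives
\begin{equation*}
\|((I(a)-1)m\otimes m)^\ell\|_{\fB_{p,\infty}^{1-d/p'}}^\ell
\lesssim \|((I(a)-1)m)^\ell\|_{\cfB^{-1+d/p}}\,\|m^\ell\|_{\fB_{p,\infty}^{2-d/p'}}
\lesssim \langle\tau\rangle^{-\frac{d-1}{2}}\langle\tau\rangle^{-1}\mD_p(t)^2,
\end{equation*}
matching exactly the critical rate $(d+1)/2$. This borderline matching is the main obstacle: at the worst exponent $s=1+d/p$ the decay we extract coincides with the one required and leaves no margin, and the admissible range $p\le 2d/(d-1)$ is forced precisely by the hypothesis $s_2<d/p_2$ of Lemma~\ref{lem;cA-P} with $s_2=2-d/p'$ and $p_2=p$. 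The Korteweg term $\div\mathcal K(a)$ is the other delicate piece, since it is genuinely quadratic in the derivatives of $a$; to close it at the same rate one combines the low-frequency heat decay with the high-frequency $\langle\tau\rangle^{-\alpha}$ decay encoded in $\mD_p$. Everything beyond these two verifications reduces to bookkeeping of exponents across the Bony decomposition.
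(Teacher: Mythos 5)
Your plan correctly identifies the decomposition of $N(a,m)$, the product laws \eqref{est;p-1}--\eqref{est;p-2} from Lemma~\ref{lem;cA-P}, and the use of Lemma~\ref{lem;ce}, and your low--low computation for the convective term matches the paper's. But the core strategy --- establish one pointwise-in-$\tau$ bound $\||\N|^{-1}N(a,m)(\tau)\|_{\fB_{p,\infty}^{1-d/p'}}^{\ell}\lesssim\langle\tau\rangle^{-b}(\ldots)$ and then invoke Lemma~\ref{lem;ce} once --- does not close, and the gap is precisely at the high-frequency factors you dismiss as ``bounded uniformly by $\mX_p(t)$.'' The norm $\mX_p(t)$ controls $m^h$ pointwise in $\tau$ only in $\cfB^{-1+d/p}$ (through the $\wt{L^\infty}$ component); in the higher norm $\cfB^{2-d/p'}$ (needed by \eqref{est;p-2}), $\mX_p(t)$ controls only the $L^1_\tau$ integral, not the sup. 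The alternative control from $\mD_p(t)$ on $m^h$ in $\cfB^{1+d/p}$ carries the weight $\t^{\al}$, not $\langle\t\rangle^{\al}$; hence the resulting pointwise rate is $\t^{-\al}$, which for $\al=d-\ve\ge1$ is not integrable near $\t=0$ and is not of the form $\langle\t\rangle^{-b}$ required by Lemma~\ref{lem;ce}. A single pointwise bound therefore cannot exist with the constants you claim.

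The paper resolves this by \emph{not} reducing to a pointwise estimate for the terms containing a high-frequency factor. It separates the cases $t\le 2$ and $t>2$. For $t\le 2$ it bounds the time integral directly by $\int_0^t\|m\|_{\cfB^{-1+d/p}}\|m^h\|_{\cfB^{2-d/p'}}\,d\t\lesssim\|m\|_{\wt L^\infty_t(\cfB^{-1+d/p})}\|m\|_{L^1_t(\cfB^{1+d/p})}^{h}\lesssim\mX_p(t)^2$, exploiting the $L^1$-in-time integrability built into $\mX_p$ and the harmless prefactor $\langle t\rangle^{-(d/(2p')+s/2)}\lesssim1$. For $t>2$ it splits $\int_0^t=\int_0^1+\int_1^t$: on $[0,1]$ it again uses the $L^1_\tau$ bound together with $\langle t\rangle\langle t-\t\rangle^{-1}\lesssim1$, and on $[1,t]$ it uses the $\t^{-\al}$ rate from $\mD_p$, where $\langle\t\rangle^\al/\t^\al\lesssim1$ is available. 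This case-splitting is exactly what generates the $\mX_p(t)^2$ term on the right-hand side of the proposition --- a term your scheme, which only ever extracts powers of $\mD_p(t)$ from the pointwise decay, would not naturally produce. To repair your argument you would need to replace the single Lemma~\ref{lem;ce} application by this two-stage time splitting on every piece that involves a high-frequency factor paired with the higher-regularity product inequality \eqref{est;p-2}.
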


\begin{proof}[The proof of Proposition \ref{prop;decay_low_N}]
By \eqref{est;1/1+r}, \eqref{est;p-1}, \eqref{est;p-2}, Lemma \ref{lem;ce} and the embedding 
$\cfB^s(\R^d) \hr \fB_{p,\infty}^s(\R^d)$, we obtain that for any $1 \le p \le 2d/(d-1)$ and $t>0$,  
\begin{align*}
\int_0^t\langle t-\t\rangle^{-(\frac{d}{2p'}+\frac{s}{2})}&
  \|(I(a)-1)m \otimes m^\ell\|_{\fB_{p,\infty}^{1-\frac{d}{p'}}}^\ell d\t \\
&\les \int_0^t
        \langle t-\t\rangle^{-(\frac{d}{2p'}+\frac{s}{2})}
        \Big(1+\|a\|_{\cfB^\frac{d}{p}}\Big) \|m\|_{\cfB^{-1+\frac{d}{p}}} \|m^\ell\|_{\cfB^{2-\frac{d}{p'}}}d\t \\
&\les (1+\mX_p(t)) \int_0^t
        \langle t-\t\rangle^{-(\frac{d}{2p'}+\frac{s}{2})}
        \Big(\|m\|_{\cfB^{-1+\frac{d}{p}}}^\ell+\|m\|_{\cfB^{-1+\frac{d}{p}}}^h\Big) 
        \|m\|_{\cfB^{2-\frac{d}{p'}}}^\ell d\t\\
&\les (1+\mX_p(t))\mD_p(t)^2
      \int_0^t
        \langle t-\t\rangle^{-(\frac{d}{2p'}+\frac{s}{2})}
        \langle\t\rangle^{-\min(\frac{d}{2}-\frac{1}{2},\al)}\langle \t \rangle^{-1}
      d\t \\ 
&\les \langle t\rangle^{-(\frac{d}{2p'}+\frac{s}{2})}(1+\mX_p(t)) \mD_p(t)^2. 
\end{align*}
For the high-frequency part $m \otimes m^h$, we consider the cases $t \le 2$ and $t>2$, separately.  
In the case $t \le 2$, we obtain by \eqref{est;1/1+r}, \eqref{est;p-1}, \eqref{est;p-2}, Proposition \ref{prop;unif} and noting that $\langle t\rangle \les 1$ 
and $2-d/p' \le 1+d/p$ for $d \ge 2$, 
\begin{align*}
\int_0^t\langle t-\t\rangle^{-(\frac{d}{2p'}+\frac{s}{2})}\|(I(a)-1) m \otimes m^h\|_{\fB_{p,\infty}^{1-\frac{d}{p'}}}^\ell d\t
\les& (1+\mX_p(t)) \int_0^t \|m\|_{\cfB^{-1+\frac{d}{p}}}\|m^h\|_{\cfB^{2-\frac{d}{p'}}}d\t \\
\les& \langle t\rangle^{-(\frac{d}{2p'}+\frac{s}{2})}\langle t\rangle^{\frac{d}{2p'}+\frac{s}{2}} (1+\mX_p(t)) \mX_p(t)^2 \\
\les& \langle t\rangle^{-(\frac{d}{2p'}+\frac{s}{2})} (1+\mX_p(t)) \mX_p(t)^2. 
\end{align*}
On the other hand, in the case $t>2$, we decompose as 
\begin{align*}
\int_0^t\langle t-\t\rangle^{-(\frac{d}{2p'}+\frac{s}{2})}&
\|(I(a)-1)m\otimes m^h\|_{\fB_{p,\infty}^{1-\frac{d}{p'}}}^\ell d\t \\
&\les (1+\mX_p(t)) \int_0^t\langle t-\t\rangle^{-(\frac{d}{2p'}+\frac{s}{2})}
\|m\otimes m^h\|_{\fB_{p,\infty}^{1-\frac{d}{p'}}}^\ell d\t \\
&\les (1+\mX_p(t)) \left(\int_0^1+\int_1^t\right)
  \langle t-\t\rangle^{-(\frac{d}{2p'}+\frac{s}{2})}\|m\otimes m^h\|_{\fB_{p,\infty}^{1-\frac{d}{p'}}}^\ell d\t. 
\end{align*}
Notice that $\langle t\rangle \langle t-\t\rangle \les 1$ if $0\le\t\le1$, it follows from \eqref{est;p-2} that 
\begin{align*}
  \int_0^1&
  \langle t-\t\rangle^{-(\frac{d}{2p'}+\frac{s}{2})}\|m\otimes m^h\|_{\fB_{p,\infty}^{1-\frac{d}{p'}}}^\ell 
 d\t \\ 
 &\les \langle t\rangle^{-(\frac{d}{2p'}+\frac{s}{2})}
     \int_0^1 
      \left(\frac{\langle t\rangle}{\langle t-\t\rangle}\right)^{\frac{d}{2p'}+\frac{s}{2}}
      \|m\|_{\cfB^{-1+\frac{d}{p}}}\|m^h\|_{\cfB^{2-\frac{d}{p'}}}
     d\t
\les \langle t\rangle^{-(\frac{d}{2p'}+\frac{s}{2})}\mX_p(1)^2. 
\end{align*}
Since $\langle\t\rangle\t^{-1} \les 1$ if $\t >1$, we obtain by \eqref{est;p-2} and Lemma \ref{lem;ce} that 
\begin{align*}
  \int_1^t
  \langle t-\t\rangle^{-(\frac{d}{2p'}+\frac{s}{2})} & \|m\otimes m^h\|_{\fB_{p,\infty}^{1-\frac{d}{p'}}}^\ell 
 d\t \\
 &\les \int_1^t
            \langle t-\t\rangle^{-(\frac{d}{2p'}+\frac{s}{2})}
            \|m\|_{\cfB^{-1+\frac{d}{p}}}\|m^h\|_{\cfB^{2-\frac{d}{p'}}}
           d\t \\
&\les \int_1^t
        \langle t-\t\rangle^{-(\frac{d}{2p'}+\frac{s}{2})}
        \Big(\|m\|_{\cfB^{-1+\frac{d}{p}}}^\ell+\|m\|_{\cfB^{-1+\frac{d}{p}}}^h\Big)
        \|m\|_{\cfB^{1+\frac{d}{p}}}^h d\t \\
&\les \mD_p(t)^2
      \int_1^t
      \langle t-\t\rangle^{-(\frac{d}{2p'}+\frac{s}{2})}
      \langle\t\rangle^{-\min(\frac{d}{2}-\frac{1}{2}, \al)}
      \langle\t\rangle^{-\al}\cdot \frac{\langle\t\rangle^\al}{\t^\al}
      d\t \\ 
&\les \langle t\rangle^{-(\frac{d}{2p'}+\frac{s}{2})}\mD_p(t)^2. 
\end{align*}
Combining the above estimates for the low-and high-frequency part, we thus obtain 
\begin{equation}
\begin{aligned} \label{decay;N_1}
\int_0^t\langle t-\t\rangle^{-(\frac{d}{2p'}+\frac{s}{2})}&\|(I(a)-1)m \otimes m\|_{\fB_{p,\infty}^{1-\frac{d}{p'}}}^\ell d\t \\
&\les \langle t\rangle^{-(\frac{d}{2p'}+\frac{s}{2})}(1+\mX_p(t)) (\mX_p(t)^2+\mD_p(t)^2). 
\end{aligned}
\end{equation}

%\noindent
%$\bullet$ {\it The estimate for $\r\,\div u$}: 
Since $I_P(a)\N a= \N(a^2 \wt{I}_P(a))$, 
we have by applying \eqref{est;I_P}, \eqref{est;p-1} and Lemma \ref{lem;ce} to the estimate for 
$\N(a^2\wt{I}_P(a))$ that 
\begin{equation} \label{decay;N_2}
\begin{aligned}
\int_0^t 
   \langle t-\t&\rangle^{-(\frac{d}{2p'}+\frac{s}{2})}
  \|a^2 \wt{I}_P(a)\|_{\fB_{p,\infty}^{1-\frac{d}{p'}}}^\ell d\t \\
  &\les \int_0^t 
       \langle t-\t\rangle^{-(\frac{d}{2p'}+\frac{s}{2})}
       \Big(\|a\|_{\cfB^\frac{d}{p}}^\ell+\|a\|_{\cfB^\frac{d}{p}}^h\Big)
       \Big(\|a\|_{\fB_{p,\infty}^{1-\frac{d}{p'}}}^\ell+\|a\|_{\fB_{p,\infty}^{1-\frac{d}{p'}}}^h\Big) 
      d\t \\
&\les \mD_p(t)^2
      \int_0^t
       \langle t-\t\rangle^{-(\frac{d}{2p'}+\frac{s}{2})}
       \langle\t\rangle^{-\min(\frac{d}{2},\al)}\langle\t\rangle^{-\min(\frac{1}{2},\al)} 
      d\t \\
&\les \langle t\rangle^{-(\frac{d}{2p'}+\frac{s}{2})} \mD_p(t)^2. 
\end{aligned}
\end{equation}

In a similar way to the estimate for $(I(a)-1)m \otimes m^\ell$,  
we obtain by using \eqref{est;1/1+r}, \eqref{est;p-1} and Lemma \ref{lem;ce} that for any $1 \le p \le 2d/(d-1)$ and $t>0$, 
\begin{align*}
\int_0^t\langle t-\t\rangle^{-(\frac{d}{2p'}+\frac{s}{2})}
  \||\N|^{-1}\Delta (I(a) m^\ell)\|_{\fB_{p,\infty}^{1-\frac{d}{p'}}}^\ell d\t 
\les& \int_0^t
        \langle t-\t\rangle^{-(\frac{d}{2p'}+\frac{s}{2})}
        \|I(a) m^\ell\|_{\fB_{p,\infty}^{1-\frac{d}{p'}}}^\ell d\t \\
\les& \int_0^t
        \langle t-\t\rangle^{-(\frac{d}{2p'}+\frac{s}{2})}
        \|a\|_{\cfB^{\frac{d}{p}}} \|m^\ell\|_{\cfB^{1-\frac{d}{p'}}}d\t \\
\les& \mD_p(t)^2
      \int_0^t
        \langle t-\t\rangle^{-(\frac{d}{2p'}+\frac{s}{2})}
        \langle\t\rangle^{-\min(\frac{d}{2},\al)}\langle\t\rangle^{-\frac{1}{2}}
      d\t \\
\les& \langle t\rangle^{-(\frac{d}{2p'}+\frac{s}{2})}\mD_p(t)^2. 
\end{align*}
For the high-frequency part of $\Delta(I(a)m^h)$, in the case $t \le 2$, we see that 
\begin{align*}
\int_0^t\langle t-\t\rangle^{-(\frac{d}{2p'}+\frac{s}{2})}
  \|I(a) m^h\|_{\fB_{p,\infty}^{1-\frac{d}{p'}}}^\ell d\t 
\les& \int_0^t\langle t-\t\rangle^{-(\frac{d}{2p'}+\frac{s}{2})}
       \|a\|_{\cfB^\frac{d}{p}}\|m\|_{\cfB^{1-\frac{d}{p'}}}^hd\t \\
\les& \langle t\rangle^{-(\frac{d}{2p'}+\frac{s}{2})}\mX_p(t)^2. 
\end{align*} 
On the other hand, we see that if $t>2$,  
\begin{align*}
\int_0^t\langle t-\t\rangle^{-(\frac{d}{2p'}+\frac{s}{2})}
  \|I(a) m^h\|_{\fB_{p,\infty}^{1-\frac{d}{p'}}}^\ell d\t 
\les& \left(\int_0^1+\int_1^t\right)\langle t-\t\rangle^{-(\frac{d}{2p'}+\frac{s}{2})}
       \|a\|_{\cfB^\frac{d}{p}}\|m\|_{\cfB^{1-\frac{d}{p'}}}^hd\t \\
\les& \langle t\rangle^{-(\frac{d}{2p'}+\frac{s}{2})}\mX_p(1)^2\\
     &+\int_1^t 
        \langle t-\t\rangle^{-(\frac{d}{2p'}+\frac{s}{2})}
        \langle\t\rangle^{-\min(\frac{d}{2},\al)}
        \langle\t\rangle^{-\al}\cdot \frac{\langle\t\rangle^\al}{\t^\al} 
      d\t \\
\les& \langle t\rangle^{-(\frac{d}{2p'}+\frac{s}{2})}(\mX_p(t)^2+\mD_p(t)^2). 
\end{align*}
Gathering all, we thus obtain 
\begin{equation} \label{decay;N_3}
\begin{aligned}
\int_0^t\langle t-\t\rangle^{-(\frac{d}{2p'}+\frac{s}{2})}
 \||\N|^{-1}\Delta (I(a) m)\|_{\fB_{p,\infty}^{1-\frac{d}{p'}}}^\ell d\t
\les \langle t\rangle^{-(\frac{d}{2p'}+\frac{s}{2})}(\mX_p(t)^2+\mD_p(t)^2). 
\end{aligned}
\end{equation}
Analogously, it follows that 
\begin{equation} \label{decay;N_4}
\begin{aligned}
\int_0^t\langle t-\t\rangle^{-(\frac{d}{2p'}+\frac{s}{2})}
 \||\N|^{-1} \N \div (I(a) m)\|_{\fB_{p,\infty}^{1-\frac{d}{p'}}}^{\ell} d\t
\les \langle t\rangle^{-(\frac{d}{2p'}+\frac{s}{2})}(\mX_p(t)^2+\mD_p(t)^2). 
\end{aligned}
\end{equation}

Lastly, we give the estimate for $\div \mathcal{K}(a)$. 
Since it holds that 
$$
 \int_0^t
 \langle t-\t\rangle^{-(\frac{d}{2p'}+\frac{s}{2})}\||\N|^{-1}\N\Delta a^2\|_{\fB_{p,\infty}^{1-\frac{d}{p'}}}^\ell 
 d\t
 \les \int_0^t
 \langle t-\t\rangle^{-(\frac{d}{2p'}+\frac{s}{2})}\|a^2\|_{\fB_{p,\infty}^{1-\frac{d}{p'}}}^\ell 
 d\t, 
$$
we obtain by similar arguments to \eqref{decay;N_2} that 
\begin{align}
 \int_0^t
 \langle t-\t\rangle^{-(\frac{d}{2p'}+\frac{s}{2})}\||\N|^{-1}\N\Delta a^2\|_{\fB_{p,\infty}^{1-\frac{d}{p'}}}^\ell 
 d\t
 \les \langle t\rangle^{-(\frac{d}{2p'}+\frac{s}{2})}(\mX_p(t)^2+\mD_p(t)^2). 
\end{align}

Applying \eqref{est;p-1} and Lemma \ref{lem;ce} to the estimation of $\N |\N a|^2$, we have 
\begin{align*}
\int_0^t
 \langle t-\t\rangle^{-(\frac{d}{2p'}+\frac{s}{2})}\|\N a \cdot \N a^\ell\|_{\fB_{p,\infty}^{1-\frac{d}{p'}}}^\ell 
 d\t
 \les& \int_0^t
 \langle t-\t\rangle^{-(\frac{d}{2p'}+\frac{s}{2})}
 \|\N a\|_{\cfB^\frac{d}{p}}\|\N a\|_{\fB_{p,\infty}^{1-\frac{d}{p'}}}^\ell
 d\t \\
 \les& \mD_p(t)^2
 \int_0^t
 \langle t-\t\rangle^{-(\frac{d}{2p'}+\frac{s}{2})}
 \langle\t\rangle^{-\min(\frac{d}{2}+\frac{1}{2},\al)}\langle\t\rangle^{-1}
 d\t \\
 \les& \langle t\rangle^{-(\frac{d}{2p'}+\frac{s}{2})} \mD_p(t)^2. 
\end{align*}
For the high-frequency part of $\N |\N a|^2$, in the case $t \le 2$, we obtain by noting that 
$3-d/p' \le 1+d/p$ and by using \eqref{est;p-2} that 
\begin{align*}
\int_0^t\langle t-\t\rangle^{-(\frac{d}{2p'}+\frac{s}{2})}
  \|\N a \cdot \N a^h\|_{\fB_{p,\infty}^{1-\frac{d}{p'}}}^\ell d\t 
\les& \int_0^t\langle t-\t\rangle^{-(\frac{d}{2p'}+\frac{s}{2})}
       \|a\|_{\cfB^\frac{d}{p}}\|\N a^h\|_{\cfB^{2-\frac{d}{p'}}}d\t \\
\les& \int_0^t %\langle t-\t\rangle^{-(\frac{d}{2p'}+\frac{s}{2})}
       \|a\|_{\cfB^\frac{d}{p}}\|a\|_{\cfB^{2+\frac{d}{p}}}^h d\t \\
\les& \langle t\rangle^{-(\frac{d}{2p'}+\frac{s}{2})}\mX_p(t)^2. 
\end{align*} 
On the other hand, we also see that if $t>2$,  
\begin{align*}
\int_0^t\langle t-\t\rangle^{-(\frac{d}{2p'}+\frac{s}{2})}
  \|\N a \cdot \N a^h\|_{\fB_{p,\infty}^{1-\frac{d}{p'}}}^\ell d\t 
\les& \left(\int_0^1+\int_1^t\right)\langle t-\t\rangle^{-(\frac{d}{2p'}+\frac{s}{2})}
       \|\N a\|_{\cfB^{-1+\frac{d}{p}}}\|a\|_{\cfB^{3-\frac{d}{p'}}}^hd\t \\
\les& \langle t\rangle^{-(\frac{d}{2p'}+\frac{s}{2})}\mX_p(1)^2\\
     &+\mD_p(t)^2 \int_1^t 
        \langle t-\t\rangle^{-(\frac{d}{2p'}+\frac{s}{2})}
        \langle\t\rangle^{-\min(\frac{d}{2},\al)}
        \langle\t\rangle^{-\al}\cdot \frac{\langle\t\rangle^\al}{\t^\al} 
      d\t \\
\les& \langle t\rangle^{-(\frac{d}{2p'}+\frac{s}{2})}(\mX_p(t)^2+\mD_p(t)^2). 
\end{align*}
Gathering all, we thus obtain 
\begin{equation} \label{decay;N_5}
\begin{aligned}
\int_0^t\langle t-\t\rangle^{-(\frac{d}{2p'}+\frac{s}{2})}
 \||\N|^{-1}\N |\N a|^2\|_{\fB_{p,\infty}^{1-\frac{d}{p'}}}^\ell d\t
\les \langle t\rangle^{-(\frac{d}{2p'}+\frac{s}{2})}(\mX_p(t)^2+\mD_p(t)^2). 
\end{aligned}
\end{equation}
In a similar way to obtaining \eqref{decay;N_5}, we immediately obtain that 
\begin{equation} \label{decay;N_6}
\begin{aligned}
\int_0^t\langle t-\t\rangle^{-(\frac{d}{2p'}+\frac{s}{2})}
 \||\N|^{-1}\div (\N a \otimes \N a)\|_{\fB_{p,\infty}^{1-\frac{d}{p'}}}^\ell d\t
\les \langle t\rangle^{-(\frac{d}{2p'}+\frac{s}{2})}(\mX_p(t)^2+\mD_p(t)^2). 
\end{aligned}
\end{equation}

Combining \eqref{decay;N_1}-\eqref{decay;N_4}, \eqref{decay;N_5}
 and \eqref{decay;N_6}, we obtain the desired estimate. 
\end{proof}

\subsection{The time-weighted estimates for the high-frequency part of $(\N a,m)$} \label{subsect;tw_high} 

By using \eqref{est;F-L} and mimicking the arguments as in \cite{Ka-Sh-Xu, Na2} (for details, see the proof of Lemma 4.3 in \cite{Ka-Sh-Xu} or \S 5.2 in \cite{Na2}),
 we are able to obtain that  for all $t>0$, 
\begin{equation} \label{decay;high_am}
\begin{aligned}
 \|\langle\t\rangle^\al(|\N|a,m)\|_{\wt{L^\infty(I};\cfB^{-1+\frac{d}{p}})}^h
 &+\|\t^\al(|\N|a,m)\|_{\wt{L^\infty(I};\cfB^{-1+\frac{d}{p}})}^h \\
 &\les \mX_{p,0} +\|\t^\al N(a,m)\|_{\wt{L^\infty(I};\cfB^{-1+\frac{d}{p}})}^h. 
\end{aligned}
\end{equation}
Here we recall that $\al=d-\ve$ and $I=(0,t)$.

In order to complete the nonlinear estimate, 
we introduce the Kozono-Shimada type bilinear estimate (cf. \cite{Ka-Sh-Xu, Ko-Sh}). 
The proof of the following lemma is given by Lemma 5.6 in \cite{Na2}. 
  
\begin{lem}[\cite{Na2}] \label{lem;KS_type}
Let $d \ge 1$, $1 \le p,r,r_i \le \infty$ $(i=1,2,3,4)$ and $T>0$. 
Assume that $1/r=1/r_1+1/r_2=1/r_3+1/r_4$, then it holds that there exists some $C>0$ such that 
\begin{align} \label{est;KS_type}
\|fg\|_{\wt{L^r_T(}\cfB^\frac{d}{p})}
\le C\left(\|f\|_{\wt{L^{r_1}_T(}\fB_{\infty,\infty}^{-1})}\|g\|_{\wt{L^{r_2}_T(}\cfB^{1+\frac{d}{p}})}
     +\|f\|_{\wt{L^{r_3}_T(}\cfB^{1+\frac{d}{p}})}\|g\|_{\wt{L^{r_4}_T(}\fB_{\infty,\infty}^{-1})}\right). 
\end{align}
\end{lem}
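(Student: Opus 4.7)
The plan is to prove \eqref{est;KS_type} by Bony's paraproduct decomposition performed on the Fourier side, which is the natural setting for the Fourier--Besov norms $\cfB^s$ since the Littlewood--Paley cut-offs act on $\wh f$. Writing $\wh{fg}=\wh f*\wh g$, the core tools are (i) Young's convolution inequality $\|\wh u*\wh v\|_{L^{p'}}\les\|\wh u\|_{L^1}\|\wh v\|_{L^{p'}}$ applied dyadic block by dyadic block, and (ii) the low-frequency bound
\begin{equation*}
\|\wh{S_{j-1}f}\|_{L^1}\le\sum_{k\le j-2}\|\wh\phi_k\wh f\|_{L^1}\le\sum_{k\le j-2}2^k\|f\|_{\fB_{\infty,\infty}^{-1}}\les 2^j\|f\|_{\fB_{\infty,\infty}^{-1}},
\end{equation*}
which is the mechanism by which the weakest-possible norm $\fB_{\infty,\infty}^{-1}$ enters.

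First I would decompose $fg=T_fg+T_gf+R(f,g)$, with paraproduct $T_fg:=\sum_{j\in\Z}S_{j-1}f\cdot(\phi_j*g)$ and remainder $R(f,g):=\sum_{j\in\Z}(\phi_j*f)\cdot(\wt\phi_j*g)$. For the paraproduct $T_fg$, each summand has spectral support in the annulus $\{|\xi|\sim 2^j\}$, so $\wh\phi_{j_0}\wh{T_fg}$ receives contributions only from $|j-j_0|\le 2$. Young's inequality together with the $\|\wh{S_{j-1}f}\|_{L^1}$-bound gives, pointwise in time,
\begin{equation*}
\|\wh\phi_{j_0}\wh{T_fg}\|_{L^{p'}}\les\|f\|_{\fB_{\infty,\infty}^{-1}}\sum_{|j-j_0|\le 2}2^j\|\wh\phi_j\wh g\|_{L^{p'}}.
\end{equation*}
Taking $L^r(0,T)$ via Hölder with $1/r=1/r_1+1/r_2$ (using $\sup_k\|\cdot\|_{L^{r_1}}\le\|\sup_k\cdot\|_{L^{r_1}}$ to land in the Chemin--Lerner norm $\wt{L^{r_1}_T}(\fB^{-1}_{\infty,\infty})$), then multiplying by $2^{(d/p)j_0}$ and $\ell^1$-summing in $j_0$, yields the first term of \eqref{est;KS_type}. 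By the symmetric decomposition of the product, $T_gf$ contributes the second term with exponents $(r_3,r_4)$.

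The delicate piece is the remainder $R(f,g)$: each summand has spectral support only in the ball $\{|\xi|\les 2^{j+2}\}$, so $\wh\phi_{j_0}\wh{R(f,g)}$ collects contributions from all $j\ge j_0-N$ for some universal $N$. Applying Young's inequality and the bound $\|\wh\phi_j\wh f\|_{L^1}\les 2^j\|f\|_{\fB_{\infty,\infty}^{-1}}$ followed by Hölder in time gives
\begin{equation*}
2^{(d/p)j_0}\|\wh\phi_{j_0}\wh{R(f,g)}\|_{L^r(0,T;L^{p'})}\les\|f\|_{\wt{L^{r_1}_T}(\fB_{\infty,\infty}^{-1})}\sum_{j\ge j_0-N}2^{(d/p)j_0}\cdot 2^j\|\wh{\wt\phi}_j\wh g\|_{L^{r_2}(0,T;L^{p'})}.
\end{equation*}
Interchanging the $\ell^1$-sum over $j_0$ with that over $j$ and using the convergent geometric series $\sum_{j_0\le j+N}2^{(d/p)(j_0-j)}\les 1$ (valid for $p<\infty$; the borderline case $p=\infty$ requires a standard refinement treating the tail on the annular support of $\wh\phi_{j_0}$ separately) reduces the remainder to $\|g\|_{\wt{L^{r_2}_T}(\cfB^{1+d/p})}$, producing the first term of \eqref{est;KS_type} once more.

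The principal difficulty lies in the remainder: since its spectrum is a ball rather than an annulus, the $\ell^1$-summability in $j_0$ against the weight $2^{(d/p)j_0}$ is borderline. It is saved precisely by the optimal pairing of the regularity exponents: the scaling $-1$ on $f$ and $1+d/p$ on $g$ combine to the critical scaling $d/p$ on the left-hand side, and any weaker regularity on $g$ would break the geometric series used in the interchange.
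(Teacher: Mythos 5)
The paper does not reprove this lemma; it cites Lemma~5.6 of \cite{Na2}, so your attempt is being judged on its own merits. The overall Bony strategy performed on the Fourier side — Young's inequality block by block, the dyadic bound $\|\wh\phi_k\wh f\|_{L^1}\le 2^k\|f\|_{\fB_{\infty,\infty}^{-1}}$, the spectral localizations and the geometric series for the remainder — is indeed the natural way to prove this Kozono--Shimada-type estimate. However, there is a genuine gap in how you reach the Chemin--Lerner norm on the right-hand side.

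You first establish the pointwise-in-time bound $\|\wh{S_{j-1}f}(t)\|_{L^1}\les 2^j\|f(t)\|_{\fB_{\infty,\infty}^{-1}}$ and only afterwards take $L^r(0,T)$ and apply H\"older. That order produces the Bochner norm $\|f\|_{L^{r_1}(0,T;\fB_{\infty,\infty}^{-1})}=\big\|\sup_{k}2^{-k}\|\wh\phi_k\wh f\|_{L^1}\big\|_{L^{r_1}(0,T)}$, not the Chemin--Lerner norm $\|f\|_{\wt{L^{r_1}_T}(\fB_{\infty,\infty}^{-1})}=\sup_{k}2^{-k}\|\wh\phi_k\wh f\|_{L^{r_1}(0,T;L^1)}$ appearing in \eqref{est;KS_type}. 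The inequality you invoke, $\sup_k\|\cdot\|_{L^{r_1}}\le\|\sup_k\cdot\|_{L^{r_1}}$, says precisely that the Chemin--Lerner norm is the \emph{smaller} of the two (this is the case $r\le\s=\infty$ of Remark~2.4), so it goes the wrong way: it does not let you pass from the Bochner norm, which H\"older handed you, to the Chemin--Lerner norm the lemma requires. As written, your argument proves a weaker estimate with $\|f\|_{L^{r_1}_T(\fB_{\infty,\infty}^{-1})}$ in place of $\|f\|_{\wt{L^{r_1}_T}(\fB_{\infty,\infty}^{-1})}$.

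The repair is the standard Chemin--Lerner manipulation: apply H\"older in time \emph{before} summing over the low frequencies $k$. From $\|\wh\phi_{j_0}\wh{T_fg}(t)\|_{L^{p'}}\les\sum_{|j-j_0|\le2}\|\wh{S_{j-1}f}(t)\|_{L^1}\|\wh\phi_j\wh g(t)\|_{L^{p'}}$, take $L^r(0,T)$, pull out the finite $j$-sum by Minkowski, apply H\"older to get $\sum_{|j-j_0|\le2}\|\wh{S_{j-1}f}\|_{L^{r_1}(0,T;L^1)}\|\wh\phi_j\wh g\|_{L^{r_2}(0,T;L^{p'})}$, and \emph{then} expand $\|\wh{S_{j-1}f}\|_{L^{r_1}(0,T;L^1)}\le\sum_{k\le j-2}\|\wh\phi_k\wh f\|_{L^{r_1}(0,T;L^1)}\le\sum_{k\le j-2}2^k\|f\|_{\wt{L^{r_1}_T}(\fB_{\infty,\infty}^{-1})}\les 2^j\|f\|_{\wt{L^{r_1}_T}(\fB_{\infty,\infty}^{-1})}$. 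The same reordering fixes the remainder. Separately, your hand-wave at $p=\infty$ is not innocuous: when $d/p=0$ the geometric series $\sum_{j_0\le j+N}2^{(d/p)j_0}$ diverges, and the remainder maps to $\fB_{\infty,\infty}^0$ rather than $\fB_{\infty,1}^0=\cfB^0$ by the usual $s_1+s_2>0$ criterion; I do not see the claimed ``standard refinement''. You should either restrict to $p<\infty$ (which covers every use of the lemma in this paper) or actually supply the endpoint argument.
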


\begin{prop}[{\it The estimate for the high frequencies}] \label{prop;high_N}
Let $1 \le p \le 2d/(d-1)$ and $\al=d-\ve$ with some small $\ve>0$. There exists some constant $C>0$ such that 
for any $t>0$, 
\begin{align*}
\|\t^\al N(a,m)\|_{\wt{L^\infty(I};\cfB^{-1+\frac{d}{p}})}^h
\le C(1+\mX_p(t))(\mX_p(t)^2+\mD_p(t)^2). 
\end{align*}
\end{prop}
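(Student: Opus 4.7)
The plan is to decompose $N(a,m)$ into the four pieces $\div((I(a)-1)m\otimes m)$, $I_P(a)\N a$, $\mathcal{L}(I(a)m)$, and $\div\mathcal{K}(a)$ listed in \eqref{eqn;nonlin}, and estimate each in $\wt{L^\infty_T}(\cfB^{-1+\frac{d}{p}})^h$ with the time weight $\t^\al$. The common first step is to move the derivative(s) through the norm via the high-frequency Bernstein inequality (Lemma \ref{lem;Bern}): $\|\partial F\|^h_{\cfB^{-1+\frac{d}{p}}}\lesssim \|F\|^h_{\cfB^{\frac{d}{p}}}$, which reduces each term to estimating a bilinear (or higher-order) building block in the Banach ring $\cfB^{\frac{d}{p}}$, cf.\ \eqref{est;Banach_ring}. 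The analytic factors $I(a)$ and $\wt{I}_P(a)$ are peeled off via the Taylor-series estimates \eqref{est;1/1+r} and \eqref{est;I_P}, their norms being absorbed into a prefactor $1+\mX_p(t)$.

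The second step handles the weight $\t^\al$, $\al=d-\ve$, by writing $\t^\al=\t^{\al_1}\t^{\al_2}$ and distributing the pieces onto the two arguments of each bilinear. For the convective term I would apply Lemma \ref{lem;KS_type} with $r=r_1=r_2=\infty$, then split each factor of $m$ (and of $\N a$ for the Korteweg piece) into low- and high-frequency parts: the high-frequency parts are controlled by the last two summands of $\mD_p(t)$, while the low-frequency parts are controlled by the first summand of $\mD_p(t)$ at the appropriate regularity index $s\in(-d/p',1+d/p]$. For the viscous term $\mathcal{L}(I(a)m)$ the Leibniz rule distributes the two derivatives onto the factors as in \eqref{est;Na}, and the same low/high splitting applies. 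The Korteweg pieces $\Delta a^2$, $|\N a|^2$, $\N a\otimes\N a$ are quadratic in $a$ or $\N a$ and are handled analogously. For short times $t\le 2$ the argument reduces to the uniform bound of Theorem \ref{thm;GWP_FLp}, exactly as in the short-time part of the proof of Proposition \ref{prop;decay_low_N}.

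The main obstacle will be the bookkeeping of the decay exponents. Since $\al=d-\ve$ is substantial, the combined decay of the two factors in every frequency combination must reach $\al$. The high$\times$high combination is immediate, as each factor alone already yields $\langle\t\rangle^{-\al}$. The low$\times$high cross terms combine the nonnegative low-frequency decay $\langle\t\rangle^{-d/(2p')-s/2}$ with $\t^{-\al}$ from the high-frequency factor, and are therefore automatic. The low$\times$low combination is the most delicate: the best low-frequency decay of $\|m\|^\ell_{\cfB^{\frac{d}{p}}}$ occurs at $s=d/p$ and equals $\langle\t\rangle^{-d/2}$, so a product of two such factors decays like $\langle\t\rangle^{-d}$, which beats $\t^{-\al}=\t^{-d+\ve}$ with the margin $\ve$ provided precisely by the choice $\al=d-\ve$. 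Once these balances are verified uniformly in $\t\in(0,t)$, summing over the four nonlinear contributions yields the stated bound $(1+\mX_p(t))(\mX_p(t)^2+\mD_p(t)^2)$.
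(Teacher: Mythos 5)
Your plan matches the paper's proof of Proposition \ref{prop;high_N}: decompose $N(a,m)$ into its four pieces, reduce to $\cfB^{d/p}$ via Bernstein, peel off the analytic factors with \eqref{est;1/1+r} and \eqref{est;I_P}, split each quadratic factor into low/high frequencies, and use Lemma \ref{lem;KS_type} with the embedding $\cfB^{-1+d/p}\hookrightarrow\fB_{\infty,\infty}^{-1}$ to close the high$\times$high term. One subtlety in your bookkeeping for the low$\times$low piece: the norm you need to bound, $\|\t^{\al/2}m^\ell\|_{\wt{L^\infty_t}(\cfB^{d/p})}$, is a Chemin--Lerner norm, while the first summand of $\mD_p(t)$ is a Bochner $L^\infty_t$ norm; since $\s=1<r=\infty$ the na\"ive inequality $\wt{L^\infty}\lesssim L^\infty$ goes the wrong way. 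The paper resolves this (cf.\ \eqref{est;komakai}) by evaluating the low-frequency factor at regularity $s=d/p-\ve$ rather than $d/p$, so the extra $2^{j\ve}$ makes $\sum_{j\le j_0}2^{j\ve}$ converge and permits the $\sup$--$\sum$ interchange; thus the $\ve$ in $\al=d-\ve$ is needed for this spectral-side absorption as well as for the time-decay margin you describe. Also note that the paper does not split into short/long time here --- that structure belongs to Proposition \ref{prop;decay_low_N}; for Proposition \ref{prop;high_N} the weighted Chemin--Lerner estimate is handled uniformly in $t$.
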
 

\begin{proof}[The proof of Proposition \ref{prop;high_N}] 
Firstly, we consider the estimation of $\div ((I(a)-1)m \otimes m)$. 
Thanks to \eqref{est;Banach_ring} and \eqref{est;1/1+r}, we obtain that  %and Lemma \ref{lem;KS_type}, 
\begin{align*}
\|\t^\al \div ((I(a)-1)m \otimes m)\|_{\wt{L^\infty_t(}\cfB^{-1+\frac{d}{p}})}^h
\les& \|\t^\al (I(a)-1)m \otimes m\|_{\wt{L^\infty_t(}\cfB^\frac{d}{p})}^h \\
\les& (1+\mX_p(t)) \times \\
     &\left(\|\t^{\frac{d}{2}-\frac{\ve}{2}}m^\ell\|_{\wt{L^\infty_t(}\cfB^\frac{d}{p})}
      \|\t^{\frac{d}{2}-\frac{\ve}{2}}m^\ell\|_{\wt{L^\infty_t(}\cfB^\frac{d}{p})} \right. \\
    &\left. +\|m^\ell\|_{\wt{L^\infty_t(}\cfB^\frac{d}{p})}
      \|\t^\al m^h\|_{\wt{L^\infty_t(}\cfB^\frac{d}{p})} \right. \\
    &\left. +\|\t^\al m^h\otimes m^h\|_{\wt{L^\infty_t(}\cfB^\frac{d}{p})} \right). 
\end{align*}
Applying \eqref{est;KS_type} in Lemma \ref{lem;KS_type} to the last term and using the embedding 
$\cfB^{-1+d/p}(\R^d) \hr \fB_{\infty,\infty}^{-1}(\R^d)$ obtained by Lemma \ref{lem;sm}, we see that 
\begin{align*}
\|\t^\al m^h \otimes m^h\|_{\wt{L^\infty_t(}\cfB^\frac{d}{p})}
\les \|m^h\|_{\wt{L^\infty_t(}\fB_{\infty,\infty}^{-1})}
      \|\t^\al m^h\|_{\wt{L^\infty_t(}\cfB^{1+\frac{d}{p}})} 
\les \mX_p(t)^2+\mD_p(t)^2. 
\end{align*}
Combining the above estimates and 
\begin{equation}
\begin{aligned} \label{est;komakai}
&\|\t^{\frac{d}{2}-\frac{\ve}{2}}m^\ell\|_{\wt{L^\infty_t(}\cfB^\frac{d}{p})}
\les \|\langle\t\rangle^{\frac{d}{2}-\frac{\ve}{2}} m\|_{L^\infty(I;\cfB^{\frac{d}{p}-\ve})}^\ell
\les \mD_p(t), \\ 
&\|\t^\al m^h\|_{\wt{L^\infty_t(}\cfB^\frac{d}{p})}+\|\t^\al m^h\|_{\wt{L^\infty_t(}\cfB^{1+\frac{d}{p}})} \les \|\t^\al m\|_{\wt{L^\infty_t(}\cfB^{1+\frac{d}{p}})}^h \les \mD_p(t), 
\end{aligned}
\end{equation}
we obtain that 
\begin{align} \label{est;high_N1}
\|\t^\al \div ((I(a)-1)m \otimes m)\|_{\wt{L^\infty_t(}\cfB^{-1+\frac{d}{p}})}^h 
\les \mX_p(t)^2+\mD_p(t)^2. 
\end{align}

On the other hand, it follows from \eqref{est;Banach_ring}, \eqref{est;I_P} and Young's inequality that 
\begin{equation} \label{est;high_N2} 
\begin{aligned}
\|\t^\al \N (\wt{I}_P(a) a^2)\|_{\wt{L^\infty_t(}\cfB^{-1+\frac{d}{p}})}^h
%\les& \|\wt{I}_P(a)\|_{\wt{L^\infty_t(}\cfB^{\frac{d}{p}})} \|\t^\al a^2\|_{\wt{L^\infty_t(}\cfB^{\frac{d}{p}})} \\
\les&\|\t^{\frac{d}{2}-\frac{\ve}{2}}a^\ell\|_{\wt{L^\infty_t(}\cfB^{\frac{d}{p}})}
      \|\t^{\frac{d}{2}-\frac{\ve}{2}}a^\ell\|_{\wt{L^\infty_t(}\cfB^{\frac{d}{p}})} \\
    &+\|\t^\al a^h\|_{\wt{L^\infty_t(}\cfB^{\frac{d}{p}})}
      \|a^\ell\|_{\wt{L^\infty_t(}\cfB^{\frac{d}{p}})} \\
    &+\|a\|_{\wt{L^\infty_t(}\cfB^{\frac{d}{p}})}
      \|\t^\al a^h\|_{\wt{L^\infty_t(}\cfB^{\frac{d}{p}})} \\
\les& \mX_p(t)^2+\mD_p(t)^2,  
\end{aligned}
\end{equation}
where we applied the same argument as \eqref{est;I_P} to 
$
\wt{I}_P(a)a=\wt{I}_P(a) a^\ell+ \wt{I}_P(a)a^h. 
$

As for the estimation of $\mathcal{L}(I(a) m)$, we obtain by noting that 
$\N(I(a)m)= (\N I(a)) \otimes m+I(a) \N m$ and using \eqref{est;Banach_ring}, \eqref{est;1/1+r} that 
\begin{align*}
\|\t^\al \Delta (I(a)m)\|_{\wt{L^\infty_t(}\cfB^{-1+\frac{d}{p}})}^h
\les& \|\t^\al \N (I(a) m)\|_{\wt{L^\infty_t(}\cfB^{\frac{d}{p}})}^h \\
\les& \|\t^\al (\N I(a)) \otimes m\|_{\wt{L^\infty_t(}\cfB^{\frac{d}{p}})}^h
      +\|\t^\al I(a) \N m\|_{\wt{L^\infty_t(}\cfB^{\frac{d}{p}})}^h. 
%\les& \|\t^\al \N a \otimes m\|_{\wt{L^\infty_t(}\cfB^{\frac{d}{p}})}
%      +\|\t^\al a \N m\|_{\wt{L^\infty_t(}\cfB^{\frac{d}{p}})}. 
\end{align*}
By the similar arguments as in \eqref{est;Na}, we have for $\al=0$ or $1$, 
\begin{equation} \label{est;Na2}
 \left\|\frac{a^\ell}{1+a}\right\|_{\wt{L^\infty_t(}\cfB^{\al+\frac{d}{p}})} 
 \les \||\N|^\al a^\ell\|_{\wt{L^\infty_t(}\cfB^{\frac{d}{p}})}, \quad 
 \left\|\frac{a^h}{1+a}\right\|_{\wt{L^\infty_t(}\cfB^{\al+\frac{d}{p}})}
  \les \||\N|^\al a^h\|_{\wt{L^\infty_t(}\cfB^{\frac{d}{p}})}. 
\end{equation}
In a similar way to obtaining \eqref{est;high_N1} and \eqref{est;high_N2}, it follows from \eqref{est;Banach_ring} and \eqref{est;Na2} with $\al=1$ that 
\begin{align*}
\|\t^\al (\N I(a)) \otimes m\|_{\wt{L^\infty_t(}\cfB^{\frac{d}{p}})}
\les& \|\t^{\frac{d}{2}-\frac{\ve}{2}}\N a^\ell\|_{\wt{L^\infty_t(}\cfB^{\frac{d}{p}})}
      \|\t^{\frac{d}{2}-\frac{\ve}{2}} m^\ell \|_{\wt{L^\infty_t(}\cfB^{\frac{d}{p}})} \\
    &+\|\t^\al \N a^h\|_{\wt{L^\infty_t(}\cfB^{\frac{d}{p}})}
      \|m^\ell\|_{\wt{L^\infty_t(}\cfB^{\frac{d}{p}})} \\
    &+\|\N a^\ell\|_{\wt{L^\infty_t(}\cfB^{\frac{d}{p}})}
      \|\t^\al m^h\|_{\wt{L^\infty_t(}\cfB^{\frac{d}{p}})} \\
    &+\|\t^\al \N a^h \otimes m^h\|_{\wt{L^\infty_t(}\cfB^{\frac{d}{p}})}, 
\end{align*}
%where we used 
Applying Lemma \ref{lem;KS_type} and $\cfB^{-1+d/p}(\R^d) \hr \fB_{\infty,\infty}^{-1}(\R^d)$ to the last term, it holds that 
\begin{align*}
\|\t^\al \N a^h \otimes m^h\|_{\wt{L^\infty_t(}\cfB^{\frac{d}{p}})}
\les& \|\N a^h\|_{\wt{L^\infty_t(}\fB_{\infty,\infty}^{-1})}
      \|\t^\al m^h\|_{\wt{L^\infty_t(}\cfB^{1+\frac{d}{p}})} \\
   &+\|\t^\al \N a^h\|_{\wt{L^\infty_t(}\cfB^{1+\frac{d}{p}})}
     \|m^h\|_{\wt{L^\infty_t(}\fB_{\infty,\infty}^{-1})} 
\les \mX_p(t)^2+\mD_p(t)^2. 
\end{align*}
On the other hand, in a similar way to obtaining \eqref{est;high_N2}, we obtain by \eqref{est;Na2} with $\al=0$ that  
\begin{align*}
\|\t^\al I(a) \N m\|_{\wt{L^\infty_t(}\cfB^{\frac{d}{p}})}
\les& \|\t^{\frac{d}{2}-\frac{\ve}{2}}a^\ell\|_{\wt{L^\infty_t(}\cfB^{\frac{d}{p}})}
      \|\t^{\frac{d}{2}-\frac{\ve}{2}} \N m^\ell \|_{\wt{L^\infty_t(}\cfB^{\frac{d}{p}})} \\
    &+\|\t^\al a^h\|_{\wt{L^\infty_t(}\cfB^{\frac{d}{p}})}
      \|\N m^\ell\|_{\wt{L^\infty_t(}\cfB^{\frac{d}{p}})} \\
    &+\|a\|_{\wt{L^\infty_t(}\cfB^{\frac{d}{p}})}
      \|\t^\al m^h\|_{\wt{L^\infty_t(}\cfB^{\frac{d}{p}})} 
\les \mX_p(t)^2+\mD_p(t)^2. 
\end{align*}
Therefore we obtain from combining the above estimates that 
\begin{align} \label{est;high_N3}
\|\t^\al \Delta (I(a)m)\|_{\wt{L^\infty_t(}\cfB^{-1+\frac{d}{p}})}^h
\les \mX_p(t)^2+\mD_p(t)^2.
\end{align}
Analogously, it holds that 
\begin{align} \label{est;high_N4}
\|\t^\al \N \div (I(a)m)\|_{\wt{L^\infty_t(}\cfB^{-1+\frac{d}{p}})}^h
\les \mX_p(t)^2+\mD_p(t)^2.
\end{align}
Lastly, let us consider the estimation of $\div \mathcal{K}(a)$. 
Since $\Delta a^2=2|\N a|^2+2a \Delta a$, it holds that 
\begin{align*}
\|\t^\al \N \Delta a^2\|_{\wt{L^\infty_t(}\cfB^{-1+\frac{d}{p}})}^h
\les \|\t^\al \Delta a^2\|_{\wt{L^\infty_t(}\cfB^{\frac{d}{p}})}^h 
\les \|\t^\al |\N a|^2\|_{\wt{L^\infty_t(}\cfB^{\frac{d}{p}})}
      +\|\t^\al a \Delta a\|_{\wt{L^\infty_t(}\cfB^{\frac{d}{p}})}. 
\end{align*}
Applying \eqref{est;Banach_ring} and Lemma \ref{lem;KS_type} to the first term $|\N a|^2$, we see that 
\begin{align*}
\|\t^\al |\N a|^2\|_{\wt{L^\infty_t(}\cfB^{\frac{d}{p}})}
\les& \|\t^{\frac{d}{2}-\frac{\ve}{2}}\N a^\ell\|_{\wt{L^\infty_t(}\cfB^{\frac{d}{p}})}^2
      +\|\t^\al \N a^h\|_{\wt{L^\infty_t(}\cfB^{\frac{d}{p}})}
       \|\N a^\ell\|_{\wt{L^\infty_t(}\cfB^{\frac{d}{p}})} \\
    &+\|\t^\al \N a^h\|_{\wt{L^\infty_t(}\fB^{-1}_{\infty,\infty})}
       \|\N a^h\|_{\wt{L^\infty_t(}\cfB^{1+\frac{d}{p}})} \\
\les& \mX_p(t)^2+\mD_p(t)^2. 
\end{align*} 
On the other hand, it follows from \eqref{est;Banach_ring} that  
\begin{align*}
\|\t^\al a \Delta a\|_{\wt{L^\infty_t(}\cfB^{\frac{d}{p}})}
\les& \|\t^{\frac{d}{2}-\frac{\ve}{2}} a^\ell\|_{\wt{L^\infty_t(}\cfB^{\frac{d}{p}})}
      \|\t^{\frac{d}{2}-\frac{\ve}{2}} \Delta a^\ell\|_{\wt{L^\infty_t(}\cfB^{\frac{d}{p}})} \\
    &+\|\t^\al a^h\|_{\wt{L^\infty_t(}\cfB^{\frac{d}{p}})}
       \|\Delta a^\ell\|_{\wt{L^\infty_t(}\cfB^{\frac{d}{p}})} 
     +\|\t^\al a\|_{\wt{L^\infty_t(}\cfB^{\frac{d}{p}})}
       \|\t^\al \Delta a^h\|_{\wt{L^\infty_t(}\cfB^{\frac{d}{p}})} \\
\les& \mX_p(t)^2+\mD_p(t)^2. 
\end{align*} 
Gathering the above estimates, we also obtain that 
\begin{equation} \label{est;high_N5}
\|\t^\al \N \Delta a^2\|_{\wt{L^\infty_t(}\cfB^{-1+\frac{d}{p}})}^h
\les \mX_p(t)^2+\mD_p(t)^2. 
\end{equation}
Analogously, it holds that 
\begin{align}
&\|\t^\al \N |\N a|^2\|_{\wt{L^\infty_t(}\cfB^{-1+\frac{d}{p}})}^h
\les \mX_p(t)^2+\mD_p(t)^2. \label{est;high_N6} \\
&\|\t^\al \div (\N a \otimes \N a)\|_{\wt{L^\infty_t(}\cfB^{-1+\frac{d}{p}})}^h
\les \mX_p(t)^2+\mD_p(t)^2. \label{est;high_N7}
\end{align}
Combining \eqref{est;high_N1}-\eqref{est;high_N7}, we thus obtain the desired estimate. 
\end{proof}

\subsection{The proof of \eqref{est;decay} in Theorem \ref{thm;Lp-L1}} \label{subsect;comp}
Applying Propositions \ref{prop;decay_low_N} to \eqref{decay;low_ruB} and Proposition \ref{prop;high_N} to \eqref{decay;high_am} respectively, 
we obtain that 
\begin{align*}
\mD_p(t) \les \mX_{p,0}+\mD_{p,0}+(1+\mX_p(t))(\mX_p(t)^2+\mD_p(t)^2). 
\end{align*}
Therefore, the above estimate together with $\mX_{p,0}+\mD_{p,0} \ll 1$ and 
Propostion \ref{prop;unif} leads to our assertion, 
namely, $\mD_p(t) \les 1$. 
Lastly, we would like to discuss the derivation of \eqref{est;decay} 
in Theorem \ref{thm;Lp-L1} by using the boundedness of $\mD_p(t)$. 
For all $t \ge 1$ and $-d/p'<s \le 1+d/p$, 
\begin{equation} \label{est;asert_decay}
\begin{aligned}
t^{\frac{d}{2p'}+\frac{s}{2}}\||\N|^s a(t)\|_{\cfB^0}
\les& t^{\frac{d}{2p'}+\frac{s}{2}}\left(\||\N|^s a^\ell(t)\|_{\cfB^0}+\||\N|^s a^h(t)\|_{\cfB^0}\right) \\
\les& \langle t\rangle^{\frac{d}{2p'}+\frac{s}{2}}\|a(t)\|_{\cfB^s}^\ell
      +t^\al \|a(t)\|_{\cfB^{2+\frac{d}{p}}}^h\\
\les& \|\langle\t\rangle^{\frac{d}{2p'}+\frac{s}{2}}a\|_{L^\infty(1,t;\cfB^s)}^\ell
      +\|\t^\al a\|_{L^\infty(1,t;\cfB^{2+\frac{3}{p}})}^h 
\les \mD_p(t),   
\end{aligned}
\end{equation}
where we used the relation $\t^{\frac{d}{2p'}+\frac{s}{2}} \les \t^\al$ for all $\t \ge 1$. 
Therefore, it follows from the above estimate that $\||\N|^sa(t)\|_{\cfB^0}=O(t^{-(\frac{d}{2p'}+\frac{s}{2})})$ $(t \to \infty)$. 
Similarly, we also obtain $\||\N|^sm(t)\|_{\cfB^0}=O(t^{-(\frac{d}{2p'}+\frac{s}{2})})$ $(t \to \infty)$. 
%Moreover, it follows from H\"older's inequality and embeddings 
%$\cfB^0 \hr \wh{L}^p$ with $\cfB^{d/p}(\R^d) \hr \wh{L}^\infty(\R^d)$ that for all $p \le r \le \infty$, 
%\begin{align*}
%\|\r(t)\|_{\wh{L}^r}
%\les \|\r(t)\|_{\wh{L}^p}^{\frac{p}{r}}\|\r(t)\|_{\wh{L}^\infty}^{1-\frac{p}{r}} 
%\les t^{-\frac{3}{2r}}t^{-\frac{3}{p}(1-\frac{p}{r})}
%\les t^{-\frac{3}{2}(\frac{2}{p}-\frac{1}{r})}. 
%\end{align*}
%In a similar way, we also see 
%$\|(u,B)(t)\|_{\wh{L}^r}=O(t^{-\frac{3}{2}(\frac{2}{p}-\frac{1}{r})})$   
%$(t \to \infty)$. 
%Since $\wh{L}^r \hr L^r$ holds if $r \ge 2$, 
Therefore, we complete the proof of Theorem \ref{thm;Lp-L1}. 

\subsection{The proof of the linear approximation \eqref{est;lin_approx} in Theorem \ref{thm;Lp-L1}}
To obtain \eqref{est;lin_approx}, we need to handle the low-frequencies and high-frequencies of 
\begin{align} \label{eqn;lin_approx}
\left(
\begin{array}{@{\,}c@{\,}}
a\\
m
\end{array}
\right)(t)-G(t,\cdot)*
\left(
\begin{array}{@{\,}c@{\,}}
a_0\\
m_0
\end{array}
\right)
=\int_0^t
G(t-\t,\cdot)*
\left(
\begin{array}{@{\,}c@{\,}}
0\\
N(a,m)
\end{array}
\right)
d\t. 
\end{align}
Hereafter, we denote the linearized solution $G(t,\cdot)*{}^t(a_0,m_0)$ by ${}^t(a^{lin},m^{lin})$. Notice that 
${}^t(a^{lin},m^{lin})$ satisfies the problem \eqref{eqn;mNSK2} with $N(a,m) \equiv 0$, 
the differences $(\wt{a},\wt{m}):=(a-a^{lin},m-m^{lin})$ which is identified as the left hand side of 
\eqref{eqn;lin_approx} fulfills the followings:
\begin{equation} \label{eqn;til}
\left\{
\begin{aligned}
&\pt_t \wt{a}+\div \wt{m}=0, \\
&\pt_t \wt{m}-\mathcal{L} \wt{m}+\gm^2 \N \wt{a} -\kappa \N \Delta \wt{a}=N(a,m), \\
&(\wt{a},\wt{m})|_{t=0}=(0,0). 
\end{aligned}
\right. 
\end{equation}
Performing the same calculation as we show \eqref{decay;high_am} and applying Proposition \ref{prop;high_N}, 
one can obtain the following estimate for the 
high frequencies of ${}^t(\wt{a},\wt{m})$:
\begin{equation} \label{decay;high_til}
\begin{aligned}
 \|\langle\t\rangle^\al(|\N|\wt{a},\wt{m})\|_{\wt{L^\infty_t}(\cfB^{-1+\frac{d}{p}})}^h
 +\|\t^\al(|\N|\wt{a},\wt{m})\|_{\wt{L^\infty_t}(\cfB^{1+\frac{d}{p}})}^h 
 \les& \|\t^\al N(a,m)\|_{\wt{L^\infty_t}(\cfB^{-1+\frac{d}{p}})}^h \\
 \les& \mX_p(t)^2+\mD_p(t)^2. 
\end{aligned}
\end{equation}
Since $\mD_p(t) \les 1$ is obtained by the proof of \eqref{est;decay} in \S \ref{subsect;comp} and 
$d/2p'+(s+1)/2<\al$, it follows from 
\eqref{decay;high_til} that for all $t>1$ and $-d/p'<s\le d/p$,  
\begin{equation} \label{est;diff_high}
\begin{aligned}
\|(\wt{a},\wt{m})(t)\|_{\cfB^s}^h
\les& t^{-(\frac{d}{2p'}+\frac{s+1}{2})}t^{(\frac{d}{2p'}+\frac{s+1}{2})}
      \|(|\N|\wt{a},\wt{m})(t)\|_{\cfB^{1+\frac{d}{p}}}^h \\
\les& t^{-(\frac{d}{2p'}+\frac{s+1}{2})}
      \|\t^\al(|\N|\wt{a},\wt{m})\|_{\wt{L^\infty_t}(\cfB^{1+\frac{d}{p}})}^h 
\les t^{-(\frac{d}{2p'}+\frac{s+1}{2})}. 
%\left\|
%\left(
%\begin{array}{@{\,}c@{\,}}
%\wt{a}\\
%\wt{m}
%\end{array}
%\right)\hspace{-1mm}(t)
%\right\|_{\cfB^s}^h
\end{aligned}
\end{equation}
Therefore, it suffices to lead the estimate for the low-frequency part of ${}^t(\wt{a},\wt{m})$. 
For the identity \eqref{eqn;lin_approx}, 
it follows from the triangle inequality that 
\begin{equation} \label{est;diff}
\begin{aligned}
\left\|
\left(
\begin{array}{@{\,}c@{\,}}
\wt{a}\\
\wt{m}
\end{array}
\right)\hspace{-0.6mm}(t)
\right\|_{\cfB^s}^\ell 
\les& \left\|
      \int_0^{t/2}
      G(t-\t,\cdot)*
      \left(
      \begin{array}{@{\,}c@{\,}}
       0\\
       N(a,m)
      \end{array}
      \right)
      d\t
      \right\|_{\cfB^s}^\ell \\
   &+\left\|
      \int_{t/2}^{t}
      G(t-\t,\cdot)*
      \left(
      \begin{array}{@{\,}c@{\,}}
       0\\
       N(a,m)
      \end{array}
      \right)
      d\t
      \right\|_{\cfB^s}^\ell
=:\wt{I}_1+\wt{I}_2. 
\end{aligned}
\end{equation}
Thanks to Proposition \ref{lem;pw_L} with $f,g \equiv 0$, it holds that 
\begin{align} \label{est;pw_N}
\left|
\mathcal{G}(t,\xi)
\left(
\begin{array}{@{\,}c@{\,}}
0\\
\wh{\phi}_j \wh{N(a,m)}
\end{array}
\right)
\right|
\les e^{-\dl_0t|\xi|^2}|\wh{\phi}_j\wh{N(a,m)}|
\end{align}
which yields to 
\begin{align*}
\langle t\rangle^{\frac{d}{2p'}+\frac{s+1}{2}}
\left\|
G(t,\cdot)*
\left(
\begin{array}{@{\,}c@{\,}}
0\\
N(a,m)
\end{array}
\right)
\right\|_{\cfB^s}^\ell 
\les& \langle t\rangle^{\frac{d}{2p'}+\frac{s+1}{2}} 
      \sum_{j \le j_0} 2^{sj} e^{-\dl_0t2^{2j}}\|\wh{\phi}_j\wh{N(a,m)}\|_{L^{p'}} \\
\les& \||\N|^{-1}N(a,m)\|_{\fB_{p,\infty}^{-\frac{d}{p'}}}^\ell
      \left(1+\sum_{j\le j_0}(t^\frac{1}{2}2^{j})^{\frac{d}{p'}+s+1} e^{-\dl_0t2^{2j}}\right). %\\
%\les& \||\N|^{-1}N(a,m)\|_{\fB_{p,\infty}^{-\frac{d}{p'}}}^\ell. 
\end{align*}
It follows from Lemma \ref{lem;ce} that 
\begin{align} \label{est;G*U_0}
\left\|
G(t,\cdot)*
\left(
\begin{array}{@{\,}c@{\,}}
0\\
N(a,m)
\end{array}
\right)
\right\|_{\cfB^s}^\ell 
\les \langle t\rangle^{-(\frac{d}{2p'}+\frac{s+1}{2})}\||\N|^{-1}N(a,m)\|_{\fB_{p,\infty}^{-\frac{d}{p'}}}^\ell. 
\end{align}
%\begin{align*}
%\wt{I}_1 
%\les& \int_0^{t/2} \sum_{j\le j_0}2^{sj}e^{-\dl_0(t-\t)2^{2j}}\|\wh{\phi}_j\wh{N(a,m)}\|_{L^{p'}} d\t \\
%\les& \int_0^{t/2} 
%        \langle t-\t\rangle^{\frac{d}{2p'}+\frac{s+1}{2}}
%        \|N(a,m)\|_{\fB_{p,\infty}^{-\frac{d}{p'}}}^\ell
%        \left(1+\sum_{j\le j_0}(2^{j}(t-\t))^{\frac{d}{p'}+s+1} e^{-\dl_0(t-\t)2^{2j}}\right)
%         d\t
%\end{align*}
As for $\wt{I}_1$ in \eqref{est;diff}, we obtain by using \eqref{est;G*U_0} that
\begin{align} \label{est;tl_I1}
\wt{I}_1
\les \int_0^{t/2}
       \langle t-\t\rangle^{-(\frac{d}{2p'}+\frac{s+1}{2})}
       \||\N|^{-1}N(a,m)\|_{\fB_{p,\infty}^{-\frac{d}{p'}}}^\ell
      d\t. 
\end{align}  
On the other hand, it follows from \eqref{est;pw_N} that
\begin{align} \label{est;tl_I2}
\wt{I}_2
\les \int_{t/2}^t \sum_{j\le j_0}2^{sj}e^{-\dl_0(t-\t)2^{2j}}\|\wh{\phi}_j\wh{N(a,m)}\|_{L^{p'}} d\t
\les \int_{t/2}^t \|N(a,m)\|_{\cfB^s}^\ell d\t. 
\end{align}
Combining \eqref{est;diff}, \eqref{est;tl_I1} and \eqref{est;tl_I2}, we see that 
\begin{equation} \label{est;diff2}
\begin{aligned}
\left\|
\left(
\begin{array}{@{\,}c@{\,}}
\wt{a}\\
\wt{m}
\end{array}
\right) \hspace{-0.5mm}(t)
\right\|_{\cfB^s}^\ell 
\les& \int_0^{t/2}
       \langle t-\t\rangle^{-(\frac{d}{2p'}+\frac{s+1}{2})}
       \||\N|^{-1}N(a,m)\|_{\fB_{p,\infty}^{-\frac{d}{p'}}}^\ell
      d\t \\
   &+\int_{t/2}^t \|N(a,m)\|_{\cfB^s}^\ell d\t. 
\end{aligned}
\end{equation}

In order to show \eqref{est;lin_approx}, we need to prove the following Propositions: 
\begin{prop} \label{prop;0_t/2} 
Let $d \ge 2$, $1 \le p \le 2$ and $s \in \R$ with $-d/p'<s\le d/p$. 
There exists some constant $C>0$ such that for all $t>2$, 
\begin{equation*}
\int_0^{t/2}
       \langle t-\t\rangle^{-(\frac{d}{2p'}+\frac{s+1}{2})}
       \||\N|^{-1}N(a,m)\|_{\fB_{p,\infty}^{-\frac{d}{p'}}}^\ell
      d\t
\le C t^{-(\frac{d}{2p'}+\frac{s+1}{2})} \delta(t),  
\end{equation*}
where $\delta(t)$ is given by $\log t$ if $d=2$ and $1$ if $d \ge 3$.  
\end{prop}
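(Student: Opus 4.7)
The plan is as follows. First, since $\tau \in [0, t/2]$ implies $\langle t - \tau \rangle \gtrsim t$, the time-weight factor can be extracted from the integral, reducing the claim to
\begin{equation*}
\int_0^{t/2} \||\N|^{-1}N(a,m)(\tau)\|_{\fB^{-d/p'}_{p,\infty}}^\ell\,d\tau \lesssim \delta(t).
\end{equation*}
Since every summand of $N(a,m)$ (see \eqref{eqn;nonlin}) carries at least one outer derivative, $|\N|^{-1}$ cancels one of them at the Fourier-multiplier level, so each term reduces to estimating a bilinear (or cubic) product of factors drawn from $\{a, m, \N a\}$ in an appropriate Fourier--Besov norm at low frequencies. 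At the output, the restriction to $|\xi| \lesssim 2^{j_0}$ allows any additional outer derivatives (notably the two extra ones in $\mathcal{L}$ or $\div\mathcal{K}$) to be absorbed via a Bernstein-type factor $2^{2 j_0}$.

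Next, for each quadratic product $fg$ I would apply Lemma \ref{lem;cA-P} with the choice $p_1 = p_2 = p$ (admissible since $p \le 2$) and the symmetric indices $s_1 = s_2 = (d/p - d/p')/2$. A direct check (using $1/p + 1/p' = 1$) shows that all admissibility conditions hold, with the last one holding as equality. This yields
\begin{equation*}
\|fg\|_{\fB^{-d/p'}_{p,\infty}}^\ell
\lesssim \|f\|_{\fB^{(d/p - d/p')/2}_{p,1}} \|g\|_{\fB^{(d/p - d/p')/2}_{p,\infty}}.
\end{equation*}
The exponent $s = (d/p - d/p')/2$ lies inside the admissible range $(-d/p', 1 + d/p]$ of \eqref{est;decay}, and since $\mD_p(t) \lesssim 1$ by the preceding subsection, each factor is $O(\langle\tau\rangle^{-d/4})$ at low frequencies, so that the low-low contribution of the product decays at the critical rate $\langle\tau\rangle^{-d/2}$. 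Low-high and high-high cross terms decay much faster thanks to the super-polynomial rate $\tau^{-\alpha}$ ($\alpha = d - \varepsilon$) on high-frequency parts, hence are integrable without loss. The cubic corrections coming from the Taylor expansions of $I(a)$ and $\wt{I}_P(a)$ are handled as in Lemma \ref{lem;nonlin_est} and Proposition \ref{prop;decay_low_N}, using \eqref{est;1/1+r}, \eqref{est;I_P}, and the smallness $\mX_p \ll 1$.

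Assembling the pieces yields the bound
\begin{equation*}
\int_0^{t/2}\||\N|^{-1}N(a,m)\|_{\fB^{-d/p'}_{p,\infty}}^\ell\,d\tau \lesssim \int_0^{t/2}\langle\tau\rangle^{-d/2}\,d\tau,
\end{equation*}
which equals $O(\log t)$ when $d = 2$ and $O(1)$ when $d \ge 3$, matching $\delta(t)$. The hard part will be the case-by-case bookkeeping: checking that for each summand of $N$ the distribution of derivatives between the outer cancellation from $|\N|^{-1}$, the Bernstein low-frequency absorption, and the factors entering Lemma \ref{lem;cA-P} is such that both regularity indices stay within the admissible range of \eqref{est;decay}. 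The Korteweg term $\div \mathcal{K}(a)$ is the most delicate because it structurally carries three derivatives on a bilinear in $a$; however once two of these are absorbed by the low-frequency restriction, the remaining derivative is handled just like the $m\otimes m$ term. Once the critical $\tau^{-d/2}$ rate is identified for the slowest (low-low) component of every product, the proposition follows.
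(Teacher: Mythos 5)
Your plan is correct and follows essentially the same route as the paper: pull out $\langle t-\tau\rangle^{-(\frac{d}{2p'}+\frac{s+1}{2})}\lesssim t^{-(\frac{d}{2p'}+\frac{s+1}{2})}$ for $\tau\le t/2$, cancel one outer derivative with $|\N|^{-1}$ and absorb any remaining ones via Bernstein on the low-frequency block, apply a product law derived from Lemma~\ref{lem;cA-P}, and use the already-established boundedness of $\mD_p(t)$ together with $\langle\tau\rangle^{-\alpha}$ decay for the high-frequency parts to obtain the borderline rate $\langle\tau\rangle^{-d/2}$, hence $\delta(t)$. The only variation is that you invoke Lemma~\ref{lem;cA-P} with the symmetric choice $s_1=s_2=\frac{1}{2}(\frac{d}{p}-\frac{d}{p'})$, giving $\langle\tau\rangle^{-d/4}$ per factor, whereas the paper uses the asymmetric pairs \eqref{est;p-3}--\eqref{est;p-4} ($\|m\|_{\cfB^{-1+d/p}}\lesssim\langle\tau\rangle^{-(d-1)/2}$ and $\|m\|_{\fB_{p,\infty}^{1-d/p'}}\lesssim\langle\tau\rangle^{-1/2}$); both land on the same $\langle\tau\rangle^{-d/2}$, so this is a cosmetic difference rather than a different argument.
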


\begin{prop} \label{prop;t/2_t}
Let $d \ge 2$, $1 \le p \le 2$ and $s \in \R$ with $-d/p'<s\le d/p$. 
There exists some constant $C>0$ such that for all $t>2$, 
\begin{equation*}
\int_{t/2}^t \|N(a,m)\|_{\cfB^s}^\ell d\t
\le C t^{-(\frac{d}{2p'}+\frac{s+1}{2})}.  
\end{equation*}
\end{prop}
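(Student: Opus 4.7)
The plan is to establish a pointwise-in-$\tau$ bound $\|N(a,m)(\tau)\|_{\cfB^s}^\ell \lesssim \tau^{-\beta(s)}$ for $\tau \ge 1$ with $\beta(s) \ge 1+\frac{d}{2p'}+\frac{s+1}{2}$, and then integrate it over $[t/2,t]$. Since $\int_{t/2}^t \tau^{-\beta}\,d\tau \simeq t^{1-\beta}$ whenever $\beta > 1$, such a pointwise bound immediately yields the claimed $t^{-(d/(2p')+(s+1)/2)}$ decay. On the interval $[t/2,t]$ one has $\tau \asymp t \ge 1$, so $\langle\tau\rangle \simeq \tau$, and the boundedness of $\mathcal{D}_p$ established in \S\ref{subsect;comp} supplies the low-frequency decay $\|(a,m)(\tau)\|_{\cfB^\sigma}^\ell \lesssim \tau^{-(\frac{d}{2p'}+\frac{\sigma}{2})}$ for $-\frac{d}{p'}<\sigma\le 1+\frac{d}{p}$, together with the very fast high-frequency decay $\|(\N a,m)(\tau)\|_{\cfB^{\pm 1+d/p}}^h \lesssim \tau^{-\alpha}$ with $\alpha = d-\ve$.

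Every term of $N(a,m)$ in \eqref{eqn;nonlin} has the form $\div F$ or $\N F$ with $F$ quadratic in $(a,\N a,m)$, once the analytic compositions $I(a)$ and $\wt I_P(a)$ are absorbed via the Taylor-expansion estimates \eqref{est;1/1+r}, \eqref{est;I_P}, \eqref{est;Na2}. The elementary inequality $\|\div F\|_{\cfB^s}^\ell \lesssim \|F\|_{\cfB^{s+1}}^\ell$, combined with the fact that $s+1 \le 1+d/p$ lies inside the admissible decay range, reduces the task to a bilinear bound for $\|F\|_{\cfB^{s+1}}^\ell$. I split every factor into low- and high-frequency parts and apply the product estimates of Lemmas \ref{lem;bil}, \ref{lem;A-P}, \ref{lem;cA-P}, arranging the indices so that one factor carries the Banach-ring regularity $\cfB^{d/p}$ (decaying like $\tau^{-d/(2p')-d/(2p)}$) while the other carries $\cfB^{s+1}$ (decaying like $\tau^{-d/(2p')-(s+1)/2}$); the restriction $1\le p\le 2$ keeps the index conditions in Lemma \ref{lem;cA-P} satisfied throughout $-d/p' < s \le d/p$.

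Contributions containing at least one high-frequency factor immediately gain the very fast decay $\tau^{-\alpha}$ and therefore meet the required threshold with a comfortable margin since $\alpha$ may be taken arbitrarily close to $d$. The delicate case is the purely low-frequency quadratic $m^\ell\otimes m^\ell$ (and its Korteweg analogue $\N a^\ell\cdot\N a^\ell$ in $\div\mathcal{K}(a)$), for which the splitting above yields $\beta(s) = \frac{d}{p'}+\frac{d}{2p}+\frac{s+1}{2}$. The inequality $\beta(s)\ge 1+\frac{d}{2p'}+\frac{s+1}{2}$ simplifies exactly to $\frac{d}{2p'}+\frac{d}{2p}\ge 1$, i.e., $\frac{d}{2}\ge 1$, which is precisely the hypothesis $d\ge 2$. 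When $d=2$ the inequality becomes equality, yet $\int_{t/2}^t \tau^{-\beta}\,d\tau \simeq t^{1-\beta}$ without any logarithmic loss, so the target bound still holds sharply. The main obstacle is therefore to arrange the product estimates of Lemma \ref{lem;cA-P} cleanly at the endpoint $s = d/p$, where the index conditions $s_2 < d/p$ become tight; this is handled by shifting $s_1\leftrightarrow s_2$ by a small margin and absorbing the resulting extra low-frequency power, which does not alter the decay exponent thanks to the cut-off $j \le j_0$.
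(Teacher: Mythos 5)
Your overall strategy coincides with the paper's: for $\tau\in[t/2,t]$ one has $\tau\simeq t$, so the boundedness $\mD_p(t)\lesssim 1$ from \S\ref{subsect;comp} gives pointwise decay $\|(a,m)(\tau)\|_{\cfB^\sigma}\lesssim\tau^{-(\frac{d}{2p'}+\frac{\sigma}{2})}$ for $\sigma\le 1+\frac{d}{p}$ plus the fast $\tau^{-\al}$ high-frequency decay; one then bounds $\|N(a,m)(\tau)\|_{\cfB^s}^\ell\lesssim\tau^{-\beta(s)}$ and integrates over a window of length $\simeq t$. Your computation $\beta(s)=\frac{d}{p'}+\frac{d}{2p}+\frac{s+1}{2}=\frac{d}{2}+\frac{d}{2p'}+\frac{s+1}{2}$ for the purely low-frequency quadratic, and the observation that $\beta(s)\ge 1+\frac{d}{2p'}+\frac{s+1}{2}\iff d\ge 2$ with no logarithmic loss on $[t/2,t]$ even at equality, match the paper exactly.

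There is, however, a real gap in how you propose to carry out the bilinear estimate at the upper endpoint. You reduce via $\|\div F\|_{\cfB^s}^\ell\lesssim\|F\|_{\cfB^{s+1}}^\ell$ and want $\|F\|_{\cfB^{s+1}}^\ell\lesssim\|m\|_{\cfB^{d/p}}\|m\|_{\cfB^{s+1}}$; but for $s\ge d/p-1$ the output regularity $s+1$ exceeds $d/p$, so neither Lemma~\ref{lem;A-P} nor Lemma~\ref{lem;cA-P} applies (both require the $\fB^{s_2}_{p,\infty}$-factor to have index $s_2<d/p$). The proposed remedy --- ``shifting $s_1\leftrightarrow s_2$ by a small margin and absorbing the resulting extra low-frequency power, which does not alter the decay exponent'' --- is incorrect: the low-frequency cutoff lets you \emph{lower} a regularity index at bounded cost, $\|u\|^\ell_{\cfB^{s_2}}\lesssim 2^{j_0(s_2-s_1)}\|u\|^\ell_{\cfB^{s_1}}$ for $s_1\le s_2$, not raise it, so replacing $\|m^\ell\|_{\cfB^{s+1}}$ by $\|m^\ell\|_{\cfB^{\sigma'}}$ with $\sigma'<d/p$ strictly degrades the decay rate by $\tau^{(s+1-\sigma')/2}$, and for $d=2$ this destroys the required threshold. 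The paper avoids this entirely by never invoking $\|\div F\|_{\cfB^s}\lesssim\|F\|_{\cfB^{s+1}}$: it expands the divergence via Leibniz (e.g.\ $\div(m\otimes m)=(m\cdot\N)m+m\div m$, $\N\Del a^2=2\N\div(a\N a)$) so that the extra derivative lands on the factor placed in the Banach-algebra slot $\cfB^{d/p}$, i.e.\ that undifferentiated factor is measured in $\cfB^{1+d/p}$ --- the top of the available decay range --- while the other factor sits at $\cfB^s$ with $s\le d/p$, and Lemma~\ref{lem;A-P} (or \eqref{est;Banach_ring} at $s=d/p$) applies directly. This yields the same $\beta(s)$ you computed. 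Alternatively, your approach can be repaired by invoking Lemma~\ref{lem;bil} (which you list but then sideline): since $s+1>0$ throughout the problematic regime, $\|F\|_{\cfB^{s+1}}\lesssim\|m\|_{\wh{L}^\infty}\|m\|_{\cfB^{s+1}}\lesssim\|m\|_{\cfB^{d/p}}\|m\|_{\cfB^{s+1}}$ with no upper restriction on the index $s+1$, and the argument closes.
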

If we show the above Propositions \ref{prop;0_t/2}, \ref{prop;t/2_t},  
then we see that it follows from \eqref{est;diff_high} and \eqref{est;diff2} that for any $t > 1$ and 
$-d/p'<s \le d/p$, 
\begin{align}
%t^{\frac{d}{2p'}+\frac{s+1}{2}}
\|(\wt{a},\wt{m})(t)\|_{\cfB^s}
\les \|(\wt{a},\wt{m})(t)\|_{\cfB^s}^\ell + \|(\wt{a},\wt{m})(t)\|_{\cfB^s}^h
\les t^{-(\frac{d}{2p'}+\frac{s+1}{2})} \delta(t). 
\end{align}
This is the desired estimate \eqref{est;lin_approx}. In order to complete the proof of \eqref{est;lin_approx}, 
we need to consider the proof of Propositions \ref{prop;0_t/2}, \ref{prop;t/2_t}. 

\begin{proof}[The proof of Proposition \ref{prop;0_t/2}]
Throughout the proof, we often use the following product laws which are derived from Lemma \ref{lem;cA-P}: 
\begin{align}
&\|fg\|_{\fB_{p,\infty}^{-\frac{d}{p'}}}
\les \|f\|_{\cfB^\frac{d}{p}}\|g\|_{\fB^{-\frac{d}{p'}}_{p,\infty}}, \label{est;p-3}\\
&\|fg\|_{\fB_{p,\infty}^{-\frac{d}{p'}}}
\les \|f\|_{\cfB^{-1+\frac{d}{p}}}\|g\|_{\fB_{p,\infty}^{1-\frac{d}{p'}}} \label{est;p-4}
\end{align}
for $1 \le p \le 2$. Thanks to  
\eqref{est;1/1+r}, \eqref{est;p-3}, \eqref{est;p-4} and $\mD_p(t) \les 1$, we see that 
\begin{equation} \label{0_t/2;convect}
\begin{aligned}
\int_0^{t/2}
       \langle t-\t\rangle^{-(\frac{d}{2p'}+\frac{s+1}{2})}
       & \||\N|^{-1}\div((I(a)-1)m \otimes m)\|_{\fB_{p,\infty}^{-\frac{d}{p'}}}^\ell
      d\t \\
\les& \int_0^{t/2}
       \langle t-\t\rangle^{-(\frac{d}{2p'}+\frac{s+1}{2})}
       \Big(\|I(a)\|_{\cfB^\frac{d}{p}}+1\Big)\|m \otimes m\|_{\fB_{p,\infty}^{-\frac{d}{p'}}}
      d\t \\
\les& (1+\mX_p(t))\int_0^{t/2}
       \langle t-\t\rangle^{-(\frac{d}{2p'}+\frac{s+1}{2})}
       \|m\|_{\cfB^{-1+\frac{d}{p}}} \|m\|_{\fB_{p,\infty}^{1-\frac{d}{p'}}}
      d\t \\
\les& t^{-(\frac{d}{2p'}+\frac{s+1}{2})} (1+\mX_p(t))\mD_p(t)^2 
      \int_0^{t/2} \langle\t\rangle^{-\frac{d}{2}} d\t
 \les t^{-(\frac{d}{2p'}+\frac{s+1}{2})} \delta(t), 
\end{aligned}
\end{equation}
where we used 
\begin{align*}
&\|m(\t)\|_{\cfB^{-1+\frac{d}{p}}}%+\|m\|_{\fB_{p,\infty}^{1-\frac{d}{p'}}} 
\les \|m(\t)\|_{\cfB^{-1+\frac{d}{p}}}^\ell+\|m(\t)\|^h_{\cfB^{-1+\frac{d}{p}}} 
\les \langle\t\rangle^{-(\frac{d}{2}-\frac{1}{2})} \mD_p(t), \\
&\|m(\t)\|_{\fB_{p,\infty}^{1-\frac{d}{p'}}}
\les \|m(\t)\|_{\cfB^{1-\frac{d}{p'}}}^\ell+\|m(\t)\|_{\cfB^{-1+\frac{d}{p}}}^h
\les \langle\t\rangle^{-\frac{1}{2}} \mD_p(t). 
\end{align*}

Since $I_P(a)\N a=\N(\wt{I}_P(a)a^2)$ 
(the definition of $\wt{I}_P$ can be seen in the proof of Lemma \ref{lem;nonlin_est}), 
it follows from \eqref{est;I_P}, \eqref{est;p-3} and \eqref{est;p-4} that 
\begin{equation} \label{0_t/2;pres}
\begin{aligned}
\int_0^{t/2} 
 \langle t-\t\rangle^{-(\frac{d}{2p'}+\frac{s+1}{2})}
 &\||\N|^{-1}\N (\wt{I}_P(a)a^2)\|_{\fB_{p,\infty}^{-\frac{d}{p'}}}^\ell
d\t \\
\les& \int_0^{t/2} 
 \langle t-\t\rangle^{-(\frac{d}{2p'}+\frac{s+1}{2})}
 \|\wt{I}_P(a) a\|_{\cfB^{-1+\frac{d}{p}}}\|a\|_{\fB_{p,\infty}^{1-\frac{d}{p'}}}
d\t \\
\les& \langle t\rangle^{-(\frac{d}{2p'}+\frac{s+1}{2})}
\int_0^{t/2} 
 \|a\|_{\cfB^{-1+\frac{d}{p}}}\|a\|_{\fB_{p,\infty}^{1-\frac{d}{p'}}}
d\t \\
\les& t^{-(\frac{d}{2p'}+\frac{s+1}{2})} \mD_p(t)^2 
      \int_0^{t/2} \langle\t\rangle^{-\frac{d}{2}} d\t
 \les t^{-(\frac{d}{2p'}+\frac{s+1}{2})} \delta(t). 
\end{aligned}
\end{equation}

As for $\mathcal{L}(I(a)m)=\mu \Delta (I(a)m)+(\mu+\lam) \N \div (I(a)m)$, we obtain by 
\eqref{est;1/1+r} and \eqref{est;p-1} that 
\begin{equation} \label{0_t/2;stress_1}
\begin{aligned}
\int_0^{t/2} 
 \langle t-\t\rangle^{-(\frac{d}{2p'}+\frac{s+1}{2})}
 &\||\N|^{-1} \Delta (I(a)m)\|_{\fB_{p,\infty}^{-\frac{d}{p'}}}^\ell
d\t \\
\les& \int_0^{t/2} 
 \langle t-\t\rangle^{-(\frac{d}{2p'}+\frac{s+1}{2})}
 \|I(a)m\|_{\fB_{p,\infty}^{1-\frac{d}{p'}}}^\ell
d\t \\
\les& \int_0^{t/2} 
 \langle t-\t\rangle^{-(\frac{d}{2p'}+\frac{s+1}{2})}
 \|a\|_{\cfB^\frac{d}{p}} \|m\|_{\fB_{p,\infty}^{1-\frac{d}{p'}}}
d\t \\
\les& t^{-(\frac{d}{2p'}+\frac{s+1}{2})} \mD_p(t)^2 
\int_0^{t/2} 
\langle\t\rangle^{-(\frac{d}{2}+\frac{1}{2})}  
d\t 
\les t^{-(\frac{d}{2p'}+\frac{s+1}{2})}. 
\end{aligned}
\end{equation}
Analogously, we see that 
\begin{equation} \label{0_t/2;stress_2}
\int_0^{t/2} 
 \langle t-\t\rangle^{-(\frac{d}{2p'}+\frac{s+1}{2})}
 \||\N|^{-1} \N \div (I(a)m)\|_{\fB_{p,\infty}^{-\frac{d}{p'}}}^\ell
d\t 
\les t^{-(\frac{d}{2p'}+\frac{s+1}{2})}. 
\end{equation}

As for the Korteweg stress tensor $\mathcal{K}(a)$, thanks to \eqref{est;p-4} and $\mD_p(t) \les 1$, we obtain that 
\begin{equation} \label{0_t/2;Korte_1}
\begin{aligned}
\int_0^{t/2} 
 \langle t-\t\rangle^{-(\frac{d}{2p'}+\frac{s+1}{2})}
 \||\N|^{-1} \N \Delta a^2\|_{\fB_{p,\infty}^{-\frac{d}{p'}}}^\ell
d\t 
\les& \int_0^{t/2} 
 \langle t-\t\rangle^{-(\frac{d}{2p'}+\frac{s+1}{2})}
 \|a^2\|_{\fB_{p,\infty}^{-\frac{d}{p'}}}^\ell
d\t \\
\les&  \langle t\rangle^{-(\frac{d}{2p'}+\frac{s+1}{2})}
\int_0^{t/2} 
 \|a\|_{\cfB^{-1+\frac{d}{p}}} \|a\|_{\fB_{p,\infty}^{1-\frac{d}{p'}}}
d\t \\
\les& t^{-(\frac{d}{2p'}+\frac{s+1}{2})} \mD_p(t)^2 
\int_0^{t/2} 
\langle\t\rangle^{-\frac{d}{2}}  
d\t \\
\les& t^{-(\frac{d}{2p'}+\frac{s+1}{2})} \delta(t). 
\end{aligned}
\end{equation}
Analogously, it holds that 
\begin{equation} \label{0_t/2;Korte_2}
\begin{aligned}
&\int_0^{t/2} 
 \langle t-\t\rangle^{-(\frac{d}{2p'}+\frac{s+1}{2})}
 \||\N|^{-1} \N |\N a|^2\|_{\fB_{p,\infty}^{-\frac{d}{p'}}}^\ell
d\t 
\les t^{-(\frac{d}{2p'}+\frac{s+1}{2})} \delta(t), \\
&\int_0^{t/2} 
 \langle t-\t\rangle^{-(\frac{d}{2p'}+\frac{s+1}{2})}
 \||\N|^{-1} \div (\N a \otimes \N a)\|_{\fB_{p,\infty}^{-\frac{d}{p'}}}^\ell
d\t 
\les t^{-(\frac{d}{2p'}+\frac{s+1}{2})} \delta(t). 
\end{aligned}
\end{equation}
Combining \eqref{0_t/2;convect}-\eqref{0_t/2;Korte_2}, we obtain the desired estimate. 
\end{proof} 

\begin{proof}[The proof of Proposition \ref{prop;t/2_t}]
Notice that 
\begin{align*}
\div ((I(a)-1)m \otimes m)
=& (\N I(a) \cdot m)m +(I(a)-1) \,\div (m \otimes m) \\
=& (\N I(a) \cdot m)m+(I(a)-1)\,((m \cdot \N)m+m \,\div m), 
\end{align*}
it follows from Lemma \ref{lem;A-P} with \eqref{est;Banach_ring}, \eqref{est;Na} 
and \eqref{est;asert_decay} that 
\begin{align*}
\int_{t/2}^t \|(\N I(a) \cdot m)m\|_{\cfB^s} d\t
\les& \int_{t/2}^t 
      \|\N I(a)\|_{\cfB^\frac{d}{p}}
      \|m\|_{\cfB^\frac{d}{p}}\|m\|_{\cfB^s} d\t \\
\les& \mD_p(t)^3
      \int_{t/2}^t
       \t^{-(\frac{d}{2p'}+\frac{s}{2})} \t^{-(d+\frac{1}{2})}
       d\t \\
\les& t^{-(\frac{d}{2p'}+\frac{s}{2})}t^{-d-\frac{1}{2}}\frac{t}{2}
\les t^{-(\frac{d}{2p'}+\frac{s+1}{2})}, 
\end{align*}
where we used $d \ge 2$ in the last inequality. Similarly, 
by using \eqref{est;Banach_ring} and \eqref{est;1/1+r}, 
\begin{align*}
\int_{t/2}^t \|(I(a)-1)(m\cdot\N)m\|_{\cfB^s}d\t
\les& (1+\mX_p(t)) \int_{t/2}^t
      \|\N m\|_{\cfB^\frac{d}{p}}\|m\|_{\cfB^s} d\t \\
\les& (1+\mX_p(t))\mD_p(t)^2
      \int_{t/2}^t
       \t^{-(\frac{d}{2p'}+\frac{s}{2})} \t^{-(\frac{d}{2}+\frac{1}{2})}
       d\t \\
\les& t^{-(\frac{d}{2p'}+\frac{s}{2})}t^{-\frac{d}{2}-\frac{1}{2}}\frac{t}{2}
\les t^{-(\frac{d}{2p'}+\frac{s+1}{2})} 
\end{align*}
and 
\begin{align*}
\int_{t/2}^t \|(I(a)-1)\,m \,\div m\|_{\cfB^s}d\t
\les t^{-(\frac{d}{2p'}+\frac{s+1}{2})}. 
\end{align*}
Combining the above estimations, we obtain 
\begin{align} \label{t/2_t;convect_x}
\int_{t/2}^t \|\div ((I(a)-1)m \otimes m)\|_{\cfB^s}d\t
\les t^{-(\frac{d}{2p'}+\frac{s+1}{2})}. 
\end{align}

Lemma \ref{lem;A-P} with \eqref{est;Banach_ring}, \eqref{est;I_P} 
and \eqref{est;asert_decay} give us that 
\begin{equation} \label{t/2_t;pres}
\begin{aligned} 
\int_{t/2}^t \|I_P(a)\N a\|_{\cfB^s}d\t 
\les& \int_{t/2}^t \|I_P(a)\|_{\cfB^\frac{d}{p}} \|\N a\|_{\cfB^s}d\t \\
\les& \mD_p(t)^2 \int_{t/2}^t \t^{-\frac{d}{2}} \t^{-(\frac{d}{2p'}+\frac{s+1}{2})} d\t 
\les t^{-(\frac{d}{2p'}+\frac{s+1}{2})}. 
\end{aligned}
\end{equation}

By virtue of $\N(I(a)m)=\N I(a) \otimes m+I(a)\N m$, it follows from Lemma \ref{lem;A-P} with \eqref{est;Banach_ring}, 
\eqref{est;Na2} and $\mD_p(t) \les 1$ that 
\begin{equation} \label{t/2_t;stress_1}
\begin{aligned} 
\int_{t/2}^t \|\Delta (I(a)m)\|_{\cfB^s}^\ell d\t 
\les& \int_{t/2}^t \|\N (I(a)m)\|_{\cfB^s}^\ell d\t \\
\les& \int_{t/2}^t \|\N a\|_{\cfB^\frac{d}{p}}\|m\|_{\cfB^s} d\t
      +\int_{t/2}^t \|a\|_{\cfB^\frac{d}{p}}\|\N m\|_{\cfB^s} d\t \\
\les& \mD_p(t)^2 \int_{t/2}^t \t^{-\frac{d}{2}} \t^{-(\frac{d}{2p'}+\frac{s+1}{2})} d\t 
\les t^{-(\frac{d}{2p'}+\frac{s+1}{2})}. 
\end{aligned}
\end{equation} 
Analogously, we obtain that 
\begin{equation} \label{t/2_t;stress_2}
\begin{aligned} 
\int_{t/2}^t \|\N \div (I(a)m)\|_{\cfB^s}^\ell d\t 
\les t^{-(\frac{d}{2p'}+\frac{s+1}{2})}. 
\end{aligned}
\end{equation}

As for the Korteweg stress tensor $\mathcal{K}(a)$, thanks to Lemma \ref{lem;A-P} with \eqref{est;Banach_ring} and $\mD_p(t) \les 1$, 
\begin{equation} \label{t/2_t;stress_2}
\begin{aligned} 
\int_{t/2}^t \|\N \Delta a^2\|_{\cfB^s}^\ell d\t 
\les& \int_{t/2}^t \|a \N a\|_{\cfB^s}^\ell d\t \\
\les& \int_{t/2}^t \|\N a\|_{\cfB^\frac{d}{p}} \|a\|_{\cfB^s} d\t \\
\les& \mD_p(t)^2 \int_{t/2}^t \t^{-(\frac{d}{2}+\frac{1}{2})} \t^{-(\frac{d}{2p'}+\frac{s}{2})} d\t
\les t^{-(\frac{d}{2p'}+\frac{s+1}{2})}. 
\end{aligned}
\end{equation} 
Here we used $\N \Del a^2=2 \N \div (a \N a)$ in the first estimation. 
Analogously, it holds that 
\begin{equation} \label{t/2_t;Korte_2}
\begin{aligned}
&\int_0^{t/2} 
 \||\N|^{-1} \N |\N a|^2\|_{\cfB^s}^\ell
d\t 
\les t^{-(\frac{d}{2p'}+\frac{s+1}{2})}, \\
&\int_0^{t/2} 
 \||\N|^{-1} \div (\N a \otimes \N a)\|_{\cfB^s}^\ell
d\t 
\les t^{-(\frac{d}{2p'}+\frac{s+1}{2})}. 
\end{aligned}
\end{equation}
Combining \eqref{t/2_t;convect_x}-\eqref{t/2_t;Korte_2}, we obtain the desired estimate. 
\end{proof}

Applying Propositions \ref{prop;0_t/2} and \ref{prop;t/2_t} to \eqref{est;diff2}, we obtain that 
\begin{align} \label{est;diff_low}
\left\|
\left(
\begin{array}{@{\,}c@{\,}}
a\\
m
\end{array}
\right)(t)-G(t,\cdot)*
\left(
\begin{array}{@{\,}c@{\,}}
a_0\\
m_0
\end{array}
\right)
\right\|_{\cfB^s}^\ell 
\les t^{-(\frac{d}{2p'}+\frac{s+1}{2})} \delta(t). 
\end{align}
Therefore, \eqref{est;diff_high} and \eqref{est;diff_low} yields to the conclusion of 
\eqref{est;lin_approx} in Theorem \ref{thm;Lp-L1}. 

%%%%%%%%%%%%%%%%%%%%%%%%%%%%%%%%%%%%%%%%%%%
%  Section 7 Asymptotic profile 
%%%%%%%%%%%%%%%%%%%%%%%%%%%%%%%%%%%%%%%%%%%
\sect{Diffusion wave properties for global solution} \label{sect;diff_wave}

In this section, we give the proof of Theorem \ref{thm;asympt}. 
Let the initial data $(a_0,m_0)$ satisfy the assumptions \eqref{assump;initial_FB} with $p=2$ and 
\eqref{cond;L1}. In order to show the time-decay of global solution accompanied by 
{\it the diffusion wave property}, let us introduce the modified decay function $\mD(t)$ instead of 
$\mD_2(t)$ as follows:  
\begin{equation*}
\mD(t):=\mD_L^{(1)}(t)+\mD_L^{(2)}(t)+\mD_H(t), 
\end{equation*}
where $\mD_L^{(1)}(t)$, $\mD_L^{(2)}(t)$ and $\mD_{H}(t)$ are defined by 
\begin{align*} 
&\mD_L^{(1)}(t):=\sup_{s \in (-\frac{d}{2},1+\frac{d}{2}]}
           \|\langle\t\rangle^{\frac{d}{4}+\frac{s}{2}} (a_L,m_L)\|_{L^\infty_t(\cB^s)}, \quad 
\mD_L^{(2)}(t):=\|\langle\t\rangle^{q(d)} a_L\|_{L^\infty_t(L^\infty)},  \\
&\mD_H(t):=
        \|\langle\t\rangle^{\al} (\N a,m)\|_{\wt{L^\infty_t(}\cB^{-1+\frac{d}{2}})}^h 
        +\|\t^{\al} (\N a,m)\|_{\wt{L^\infty_t(}\cB^{1+\frac{d}{2}})}^h.  
\end{align*}
%\begin{align*}
%\mD(t):=\sup_{s\in (-\frac{d}{2},1+\frac{d}{2}]}&
%           \|\langle\t\rangle^{\frac{d}{4}+\frac{s}{2}}\chi_L (a,m)\|_{L^\infty_t(\cB^s)}
%        +\|\langle\t\rangle^{q(d)} \chi_L a\|_{L^\infty_t(L^\infty)} \\
%       &+\|\langle\t\rangle^{\wt{\al}} (\N a,m)\|_{\wt{L^\infty_t(}\cB^{-1+\frac{d}{2}})}^h 
%        +\|\t^{\wt{\al}} (\N a,m)\|_{\wt{L^\infty_t(}\cB^{1+\frac{d}{2}})}^h, 
%\end{align*}
The decay rate of high-frequencies $\al$ in $\mD_H(t)$ is denoted by $d-\ve$ with sufficiently small $\ve>0$ and 
$f_L:=\mathcal{F}^{-1}[\chi_L(\xi) \wh{f}\,]$ with the low-frequency cut-off $\chi_L$ as given by Lemma \ref{lem;DW}. 

%The outlined proof of Theorem \ref{thm;asympt} is given as follows: in \S\,\ref{subsect;bdd}, we 
%
%in a similar to the arguments in \sect 

\subsection{The uniform boundedness for the modified decay function $\mD_L ^{(i)}(t)$}  \label{subsect;bdd}
In this section, we shall consider the uniform boundedness for $\mD_L^{(i)}(t)$ $(i=1,2)$. If we obtain such a uniform estimate, then we are able to obtain 
the refined time-decay estimate $\|a(t)\|_{L^\infty}=O(t^{-q(d)})$ as $t \to \infty$. 
First of all, we give the estimation of the low-frequency part of $(a,m)$. 
The solution of the integral equation \eqref{eqn;IE} restricted to low frequencies is denoted by 
\begin{equation} \label{eqn;aLmL}
\left(
\begin{array}{@{\,}c@{\,}}
a_L \\
m_L
\end{array}
\right) (t)
=
\left(
\begin{array}{@{\,}c@{\,}}
G^{1,1}_L(t,\cdot)*a_0+G^{1,2}_L(t,\cdot)*m_0+\dsp \int_0^t G^{1,2}_L(t-\t,\cdot)*N(a,m)\,d\t\\
\dsp G^{2,1}_L(t,\cdot)*a_0+G^{2,2}_L(t,\cdot)*m_0+\int_0^t G^{2,2}_L(t-\t,\cdot)*N(a,m)\,d\t
\end{array}
\right), 
\end{equation}
where $G^{i,j}_L(t,x):=\F^{-1}[\chi_L(\xi) \mG^{i,j}(t,\xi)]$ and we recall that $\mG^{i,j}(t,\xi)$ are defined by \eqref{eqn;mg}. 
By using Lemma \ref{lem;DW} and noting that for $\psi \equiv 1$, 
%\begin{gather*}
%G_L^{1,1}(t,\cdot)*a_0=-(\pt_t K_{\psi,L}(t,\cdot))*a_0-\nu (\Delta K_{\psi,L}(t,\cdot))*a_0, \\
%G_L^{1,2}(t,\cdot)*m_0=({}^t \N K_{\psi,L}(t,\cdot))*m_0, \quad 
%G_L^{2,1}(t,\cdot)*a_0=-(\N(1+\Delta) K_{\psi,L}(t,\cdot))*a_0, \\
%G_L^{2,2}(t,\cdot)*m_0=(\pt_t K_{L,\psi}(t,\cdot))*m_0+\mathcal{P}_\s e^{\mu t \Delta} m_0, 
%\end{gather*}
%\begin{gather*}
%G_L^{1,1}(t,x)=-\pt_t K_{\psi,L}(t,\cdot)-\nu \Delta K_{\psi,L}(t,x), \\
%G_L^{1,2}(t,x)={}^t \N K_{\psi,L}(t, x), \quad 
%G_L^{2,1}(t,x)=-\N(1+\Delta) K_{\psi,L}(t,x), \\
%G_L^{2,2}(t,x)=\pt_t K_{L,\psi}(t,x)+\mathcal{F}^{-1}\left[e^{-\mu|\xi|^2t}\left(\text{Id}-\frac{\xi{}^t\xi}{|\xi|^2}\right)\right], 
%\end{gather*}
\begin{gather*}
G_L^{1,1}(t,x)=\pt_t K_{\psi,L}(t,\cdot)-\nu \Delta K_{\psi,L}(t,x), \;\;
G_L^{1,2}(t,x)={}^t \N K_{\psi,L}(t, x)
\end{gather*}
($K_{\psi,L}(t,x)$ is already defined in Lemma \ref{lem;DW}), 
we easily see that 
\begin{align}  \label{est;fund_1}
\|G_L^{1,1}(t,\cdot)\|_{L^\infty}+\|G_L^{1,2}(t,\cdot)\|_{L^\infty} \les \langle t\rangle^{-\frac{(3d-1)}{4}}. 
\end{align}
Applying the Hausdorff-Young inequality and \eqref{est;fund_1} to $\text{\eqref{eqn;aLmL}}_1$ and 
noting that there exists a $C^\infty$-function $\wt{\chi_L}=\wt{\chi_L}(\xi)$ 
whose support is compact in $\R^d$ with $\wt{\chi_L} \chi_L=1$ on $\supp \chi_L$, we obtain that 
\begin{equation} \label{est;basic_1}
\begin{aligned}
\|a_L(t)\|_{L^\infty} 
\les& \|G_L^{1,1}(t,\cdot)\|_{L^\infty} \|a_0\|_{L^1}+\|G_L^{1,2}(t,\cdot)\|_{L^\infty}\|m_0\|_{L^1} \\
      &+\int_0^t \|G^{1,2}_L(t-\t,\cdot)\|_{L^\infty} \|\wt{N_L}(a,m)\|_{L^1} d\t  \\
\les& \langle t\rangle^{-\frac{(3d-1)}{4}} \|(a_0,m_0)\|_{L^1}
      +\int_0^t \langle t-\t\rangle^{-\frac{(3d-1)}{4}} \|\wt{N_L}(a,m)\|_{L^1} d\t,  
\end{aligned}
\end{equation}
where $\wt{N_L}(a,m):=\mathcal{F}^{-1}[\wt{\chi_L} \wh{N(a,m)}]$. 
On the other hand, by similar arguments as \eqref{est;heat}, it follows from Lemma \ref{lem;ce} and Proposition \ref{lem;pw_L} that 
for all $t$, $s$ satisfying $t>0$, $-d/2<s \le 1+d/2$, 
\begin{align} \label{est;basic_2}
\|(a_L,m_L)(t)\|_{\cB^s} 
\les \langle t\rangle^{-(\frac{d}{4}+\frac{s}{2})} \|(a_{0,L},m_{0,L})\|_{\dB_{2,\infty}^{-\frac{d}{2}}}
      +\int_0^t \langle t-\t\rangle^{-(\frac{d}{4}+\frac{s}{2})} \|N_L(a,m)\|_{\dB_{2,\infty}^{-\frac{d}{2}}} d\t. 
\end{align} 

Hereafter, let us consider the estimate for the nonlinear terms in \eqref{est;basic_1} and \eqref{est;basic_2}. 
%Noting that $\|G_L^{2,2}(t,\cdot)\|_{L^\infty} \les \langle t\rangle^{-\frac{d}{2}}$
\begin{prop} \label{prop;ref_decay1} Let $d \ge 2$. 
There exists some constant $C>0$ such that for all $t>0$, 
\begin{align}
\int_0^t \langle t-\t\rangle^{-\frac{(3d-1)}{4}} \|\wt{N}_L(a,m)\|_{L^1} d\t 
\le C\langle t\rangle^{-q(d)} (1+\mX_2(t)+\mD(t))(\mX_2(t)^2+\mD(t)^2). %(\mX_2(t)^2+\mD(t)^2+\mX_2(t)\mD(t)^2+\mD(t)^3).  
\end{align}
\end{prop}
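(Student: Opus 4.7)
The plan is to bound $\|\wt{N}_L(a,m)(\tau)\|_{L^1}$ by a quantity that decays fast enough in $\tau$, and then close the time convolution via Lemma \ref{lem;ce}. I would start from the observation that $\wt{\chi}_L$ is smooth and compactly supported, so for every multi-index $\beta$ the Fourier multiplier $\wt{\chi}_L(\xi)\xi^{\beta}$ is a Schwartz function of $\xi$ and the operator $\wt{\chi}_L(D)\partial^{\beta}$ maps $L^1(\R^d)$ into itself. This will absorb all the explicit derivatives appearing inside $N(a,m)$ --- the divergence in the convection term, the gradient in the pressure term, the Laplacian and gradient-divergence in the viscous stress $\mathcal{L}(I(a)m)$, and the three derivatives in $\div\mathcal{K}(a)$ --- and reduce the task to bounding in $L^1$ the undifferentiated building blocks $(I(a)-1)\,m\otimes m$, $\wt{I}_P(a)\,a^2$, $I(a)m$, $a^2$, $|\N a|^2$, and $\N a\otimes\N a$.

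Next I would handle each building block by H\"older's inequality $\|fg\|_{L^1}\le\|f\|_{L^2}\|g\|_{L^2}$ and read off each $L^2$ factor from the definition of $\mD(t)$. Splitting every unknown into its low- and high-frequency parts, $\mD_L^{(1)}$ applied at $s=0$ and $s=1$ yields $\|u_L(\tau)\|_{L^2}\le C\langle\tau\rangle^{-d/4}\mD(t)$ and $\|\N u_L(\tau)\|_{L^2}\le C\langle\tau\rangle^{-d/4-1/2}\mD(t)$, while $\mD_H$ combined with the monotonicity $\|\cdot\|_{\cB^{s_1}}^h\le\|\cdot\|_{\cB^{s_2}}^h$ for $s_1\le s_2$ controls the high-frequency pieces at the much faster rate $\langle\tau\rangle^{-\al}$. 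The $L^\infty$ factors stemming from $I(a)-1$ or $\wt{I}_P(a)$ I would treat by the Neumann-series arguments of \eqref{est;1/1+r} and \eqref{est;I_P} together with the Sobolev embedding $\cB^{d/2}\hookrightarrow L^\infty$; combined with Proposition \ref{prop;unif} this will produce the prefactor $(1+\mX_2(t))$. Whenever a pointwise decay of $a_L$ beyond the Sobolev rate $\langle\tau\rangle^{-d/2}$ is needed, the a priori bound $\|a_L(\tau)\|_{L^\infty}\le\langle\tau\rangle^{-q(d)}\mD(t)$ encoded in $\mD_L^{(2)}$ can be invoked. Summing all contributions should yield $\|\wt{N}_L(a,m)(\tau)\|_{L^1}\le C\langle\tau\rangle^{-\beta(d)}(1+\mX_2(t)+\mD(t))(\mX_2(t)^2+\mD(t)^2)$ with $\beta(d)$ large enough that Lemma \ref{lem;ce}, whose hypothesis $(3d-1)/4>1$ holds for every $d\ge 2$, delivers the advertised $\langle t\rangle^{-q(d)}$.

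The genuinely delicate step will be the viscous stress $\mathcal{L}(I(a)m)$: pushing \emph{both} derivatives onto $\wt{\chi}_L(D)$ would leave only $\|I(a)m\|_{L^1}\le C\|a\|_{L^2}\|m\|_{L^2}\le C\langle\tau\rangle^{-d/2}\mD(t)^2$, which in low dimensions just misses the threshold imposed by Lemma \ref{lem;ce}. The remedy I plan to use is to transfer only \emph{one} derivative into the multiplier, rewriting $\Delta(I(a)m)=\div(\N I(a)\otimes m+I(a)\N m)$ and similarly for $\N\div(I(a)m)$, so that exactly one spatial derivative remains on the nonlinearity. H\"older then produces $\|\N a\|_{L^2}\|m\|_{L^2}+\|a\|_{L^2}\|\N m\|_{L^2}\le C\langle\tau\rangle^{-d/2-1/2}\mD(t)^2$, and the extra half-power of $\tau$ is precisely what is needed to close the argument. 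The convection term can be treated analogously, using the identity $(I(a)-1)=-1+I(a)$ to isolate the cubic correction (to which the stronger $L^\infty$-decay of $a$ from $\mD_L^{(2)}$ applies); the Korteweg contributions are the easiest, since $\mathcal{K}(a)$ already carries two gradients of $a$.
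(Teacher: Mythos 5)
Your overall strategy matches the paper's: transfer derivatives into the compactly supported multiplier $\wt{\chi}_L(\xi)\xi^\beta$, bound the remaining products in $L^1$ by H\"older, split into low and high frequencies, use $\cB^{d/2}\hr L^\infty$ for the $I(a)$-type factors, and close the time integral with Lemma~\ref{lem;ce}. You also correctly diagnose the crucial point about $\mathcal{L}(I(a)m)$, that one spatial derivative must be kept on the nonlinearity so that H\"older yields the extra half power of $\langle\t\rangle$; this is precisely the paper's device in \eqref{est;ref10}, via $\Delta(I(a)m)=\div(\wt{I}(a)(\N a\otimes m+a\N m))$ and $\|\F^{-1}[|\xi|\chi_L]\|_{L^1}\les1$.

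There are, however, two places where the plan as written would not close. First, the building block $a^2$, coming from $\N\Delta a^2$ in $\div\mathcal{K}(a)$, suffers from exactly the same defect you flag for $I(a)m$: absorbing all three derivatives leaves $\|a^2\|_{L^1}\les\|a\|_{L^2}^2\les\langle\t\rangle^{-d/2}$, and Lemma~\ref{lem;ce} then only produces $\langle t\rangle^{-\min(\frac{3d-1}{4},\frac{d}{2})}$, which is strictly weaker than $q(d)=\min(\frac{3d-1}{4},\frac{d}{2}+\frac12)$ for every $d\ge2$. Your remark that Korteweg is ``the easiest'' because $\mathcal{K}(a)$ carries two gradients is accurate for $|\N a|^2$ and $\N a\otimes\N a$ but not for $\Delta a^2$; you must apply the same one-derivative fix there as well, e.g.\ write $\Delta a^2=2\div(a\N a)$ and keep $a\N a$ under the $L^1$-norm.

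Second, your assertion that $\mD_H$ controls high-frequency pieces at the uniform rate $\langle\t\rangle^{-\al}$ is only true at the low regularity $\cB^{-1+\frac{d}{2}}$; the higher-regularity half of $\mD_H$ carries the weight $\t^{\al}$, which degenerates as $\t\to0$. Whenever a derivative lands on a high-frequency factor—for instance $\|\N m_H\|_{L^2}$ when $d\le3$, or $\|\N a_H\|_{L^\infty}$—the monotonicity $\|\cdot\|_{\cB^{s_1}}^h\les\|\cdot\|_{\cB^{s_2}}^h$ forces you up to $s>-1+d/2$, and the pointwise bound available is $\t^{-\al}$, not $\langle\t\rangle^{-\al}$. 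Consequently the uniform bound $\|\wt{N}_L(a,m)(\t)\|_{L^1}\les\langle\t\rangle^{-\beta(d)}(\cdots)$ you aim for fails near $\t=0$. As the paper does in \eqref{est;ref6}--\eqref{est;ref7}, you must split $\int_0^t=\int_0^1+\int_1^t$, control the short-time piece through the $L^1$-in-time bound built into $\mX_2(t)$ (Proposition~\ref{prop;unif}), and reserve the $\t^{-\al}$ decay for $\t\ge1$. With these two adjustments your plan coincides with the paper's proof.
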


\begin{prop} \label{prop;ref_decay2} Let $d \ge 2$ and $s \in \R$ with $-d/2<s\le 1+d/2$. 
There exists some constant $C>0$ such that for all $t>0$, 
\begin{align}
\int_0^t \langle t-\t\rangle^{-(\frac{d}{4}+\frac{s}{2})} \|N_L(a,m)\|_{\dB_{2,\infty}^{-\frac{d}{2}}} d\t 
\le C\langle t\rangle^{-(\frac{d}{4}+\frac{s}{2})}(1+\mX_2(t))(\mX_2(t)^2+\mD(t)^2). 
\end{align}
\end{prop}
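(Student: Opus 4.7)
\medskip

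\noindent\textbf{Proof plan for Proposition \ref{prop;ref_decay2}.} The strategy is to mimic the proof of Proposition \ref{prop;decay_low_N} in the specialised setting $p=2$, using the modified decay function $\mathcal{D}(t)$ in place of $\mathcal{D}_p(t)$. First, since $\wh{N_L(a,m)}=\chi_L\wh{N(a,m)}$ is supported in a fixed compact subset of $\R^d$, Plancherel's identity (or Lemma \ref{lem;Bern}) gives $\|N_L(a,m)\|_{\dB_{2,\infty}^{-d/2}}\lesssim \|N(a,m)\|_{\fB_{2,\infty}^{-d/2}}^{\ell}$, so it suffices to bound $|\nabla|^{-1}N(a,m)$ in the semi-norm $\|\cdot\|_{\fB_{2,\infty}^{1-d/2}}^{\ell}$. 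With $p=2$, Lemma \ref{lem;cA-P} yields the two product estimates
\begin{equation*}
\|fg\|_{\fB_{2,\infty}^{1-\frac{d}{2}}}\lesssim \|f\|_{\cB^{\frac{d}{2}}}\|g\|_{\fB_{2,\infty}^{1-\frac{d}{2}}},
\qquad
\|fg\|_{\fB_{2,\infty}^{1-\frac{d}{2}}}\lesssim \|f\|_{\cB^{-1+\frac{d}{2}}}\|g\|_{\fB_{2,\infty}^{2-\frac{d}{2}}},
\end{equation*}
which are the exact analogues of \eqref{est;p-1} and \eqref{est;p-2} used before.

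Next, I decompose $N(a,m)$ into the four building blocks $\div((I(a)-1)m\otimes m)$, $\N(a^2\wt I_P(a))$, $\mathcal{L}(I(a)m)$, and $\div\mathcal{K}(a)$, and estimate each one exactly as in Propositon \ref{prop;decay_low_N}. For the quadratic terms involving $m\otimes m$, I would further split the second factor into its low-frequency part $m^{\ell}$ and its high-frequency part $m^{h}$: for the low-low interaction I use $\|m^{\ell}\|_{\cB^{s}}\lesssim \langle\t\rangle^{-(d/4+s/2)}\mathcal{D}(t)$ for the relevant values of $s$, which is provided by $\mathcal{D}_{L}^{(1)}(t)$; for the high-frequency factor I use the weighted bound encoded in $\mathcal{D}_H(t)$, namely $\|m^{h}\|_{\cB^{\sigma}}\lesssim \langle\t\rangle^{-\alpha}\mathcal{D}(t)$ for $\sigma\le 1+d/2$. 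The analytic factors $I(a)-1$, $\wt I_P(a)$, $I(a)$ are controlled by \eqref{est;1/1+r} and \eqref{est;I_P}, giving the $(1+\mathcal{X}_2(t))$ prefactors. The Korteweg term $\div\mathcal{K}(a)$ is handled identically, noting that the $|\nabla|^{-1}$ gain precisely compensates the derivative loss of the Korteweg stress.

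With the integrand pointwise bounded by the product of two decay factors coming from $\mathcal{X}_2(t)^2+\mathcal{D}(t)^2$ (with an overall $(1+\mathcal{X}_2(t))$ coefficient from the analytic nonlinearities), the time integration
\begin{equation*}
\int_0^t \langle t-\t\rangle^{-(\frac{d}{4}+\frac{s}{2})}\langle\t\rangle^{-a}\langle\t\rangle^{-b}\,d\t
\end{equation*}
with $a+b>d/2+s/2$ (which is verified case by case since $d\ge 2$) is reduced to $\langle t\rangle^{-(d/4+s/2)}$ by Lemma \ref{lem;ce}. For the high-frequency factors appearing when the second factor is $m^{h}$ or $\N a^{h}$, I split the interval $(0,t)$ into $(0,1)$ and $(1,t)$ as in the proof of Proposition \ref{prop;decay_low_N}: on $(0,1)$ I absorb the weight by $\langle t\rangle^{-(d/4+s/2)}\lesssim 1$ and use the uniform bound $\mathcal{X}_2$; on $(1,t)$ I exploit $\langle\t\rangle^{\alpha}\t^{-\alpha}\lesssim 1$ to re-express things in terms of $\mathcal{D}_H(t)$.

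\medskip

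The main obstacle will be the \emph{high-frequency/low-frequency interaction} for the convective term $\div((I(a)-1)m\otimes m)$ and the viscous term $\mathcal{L}(I(a)m)$: there the factor at $\cB^{-1+d/2}$ regularity decays only like $\langle\t\rangle^{-(d-1)/2}$, so the integrability of the time convolution is borderline in $d=2$, and one must check that $\alpha=d-\varepsilon$ is large enough to compensate. In every subcase this follows by the same ``$\langle\t\rangle^{\alpha}\t^{-\alpha}$'' trick already employed in Proposition \ref{prop;decay_low_N}, so no new idea is required, only a careful bookkeeping of exponents.
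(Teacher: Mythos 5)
Your proposal is correct, and it is essentially the route the paper advertises (``the way to the proofs are almost same as the proof of Proposition \ref{prop;decay_low_N}''), i.e.\ specializing the Fourier--Besov product estimates \eqref{est;p-1}, \eqref{est;p-2} of Lemma \ref{lem;cA-P} to $p=2$ and re-running the case-by-case time integration of Proposition \ref{prop;decay_low_N} with $\mD_2(t)$ replaced by $\mD(t)$. The paper's own written computation for the representative term $(m\cdot\N)m$ is actually a \emph{slight deviation} from that advertisement: for the low--low interaction it invokes the embedding $L^1(\R^d)\hookrightarrow\dB_{2,\infty}^{-d/2}(\R^d)$ followed by a plain H\"older inequality, which replaces the exponents $(\cB^{d/2},\fB_{2,\infty}^{1-d/2})$ or $(\cB^{-1+d/2},\fB_{2,\infty}^{2-d/2})$ by the pair $(L^2,L^2)$; for the high--high interaction the paper reverts to Lemma \ref{lem;cA-P} with $\|m\|_{\cB^{-1+d/2}}\|\N m_H\|_{\dB_{2,\infty}^{1-d/2}}$, which is what you do everywhere. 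Both choices land on comparable decay rates for $d\ge2$, and the convolution integral closes by Lemma \ref{lem;ce} in either case. Your version is arguably the more systematic one, since it literally reuses Proposition \ref{prop;decay_low_N} and only changes the decay functional.

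Two points worth making explicit in a final write-up. First, passing from $\|N_L(a,m)\|_{\dB_{2,\infty}^{-d/2}}$ to $\|N(a,m)\|_{\fB_{2,\infty}^{-d/2}}^{\ell}$ and from $\mD(t)$ back to the dyadic low/high norms $\|\cdot\|_{\cB^s}^{\ell}$, $\|\cdot\|_{\cB^s}^{h}$ requires reconciling the smooth cutoff $\chi_L$ in $m_L=\F^{-1}[\chi_L\wh{m}]$ with the dyadic threshold $j_0$; the paper handles this implicitly via the remark ``there exists some $j_0\in\Z$ such that $\|a_H\|_{\cB^s}\lesssim\|a\|_{\cB^s}^{h}$,'' which you should invoke at the start. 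Second, your phrase ``$\|m^{h}\|_{\cB^{\sigma}}\lesssim\langle\t\rangle^{-\alpha}\mathcal{D}(t)$ for $\sigma\le1+d/2$'' is only literally correct for $\sigma\le -1+d/2$; for $-1+d/2<\sigma\le1+d/2$ one only gets $\t^{-\alpha}$ rather than $\langle\t\rangle^{-\alpha}$ from $\mD_H(t)$, which is exactly why the split $(0,1)\cup(1,t)$ together with $\langle\t\rangle^{\alpha}\t^{-\alpha}\lesssim1$ is needed. You do flag this trick at the end, so the proposal is complete; just be careful not to state the $\langle\t\rangle^{-\alpha}$ decay for all $\sigma\le1+d/2$ without qualification.
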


\begin{proof}[The proof of Proposition \ref{prop;ref_decay1}] 
Firstly, we shall mention about some notations. In what follows, $\chi_H=\chi_H(\xi)$ is 
denoted by $1-\chi_L$ and $f_H:=\F^{-1}[\chi_H \wh{f}\,]$ for some $f \in \mathcal{S}'$.  
We notice that  there exists some $j_0 \in \Z$ such that $\|a_H\|_{\cB^s} \les \|a\|^h_{\cB^s}$. 

%\vskip1mm
\noindent
{\it The estimate for $\div((I(a)-1) m\otimes m)=(\N I(a) \cdot m)m+(I(a)-1)\div(m \otimes m)$}: 
First of all, we notice that it follows from $\cB^{d/2}(\R^d) \hr L^\infty(\R^d)$ and the similar argument as \eqref{est;Na2} that 
\begin{equation} \label{est;1/1+r2}
\||\N|^\al (\wt{I}(a)a_A)\|_{L^\infty} \les \||\N|^\al (\wt{I}(a) a_A)\|_{\cB^\frac{d}{2}} \les \||\N|^\al a_A\|_{\cB^\frac{d}{2}}
\end{equation}
for $A=L$ or $H$ and $\al = 0,1$. 
By using the Hausdorff-Young inequality, Sobolev's embedding (see e.g. Proposition 2.20 in \cite{B-C-D}) and \eqref{est;1/1+r2}, we obtain that 
\begin{equation} \label{est;ref1}
\begin{aligned}
\int_0^t \langle t-\t\rangle^{-\frac{(3d-1)}{4}}& \|(\N (\wt{I}(a)a_L) \cdot m)m\|_{L^1} d\t \\
\les& \int_0^t \langle t-\t\rangle^{-\frac{(3d-1)}{4}} \|\N (\wt{I}(a) a_L)\|_{L^\infty}\|m\|_{L^2}^2 d\t \\
%\les& \int_0^t \langle t-\t\rangle^{-\frac{(3d-1)}{4}} \|\N a_L\|_{L^\infty} \|m\|_{L^2}^2 d\t \\
\les& \int_0^t \langle t-\t\rangle^{-\frac{(3d-1)}{4}} \langle\t\rangle^{-(\frac{d}{2}+\frac{1}{2})} \|\N a_L\|_{\cB^\frac{d}{2}}\left(\|m_L\|_{\cB^0}+\|m_H\|_{\cB^0}\right)^2 d\t \\
\les& \mD(t)^3 \int_0^t \langle t-\t\rangle^{-\frac{(3d-1)}{4}}  \langle\t\rangle^{-(\frac{d}{2}+\frac{1}{2})} \langle\t\rangle^{-2\min(\frac{d}{4},\al)} d\t \\
\les& \langle t\rangle^{-\frac{(3d-1)}{4}} \mD(t)^3,  
\end{aligned}
\end{equation}
where we used 
$$
  \|m_L\|_{\cB^0} \les \langle\t\rangle^{-\frac{d}{4}} \mD_L^{(1)}(t), \quad \|m_H\|_{\cB^0} \les \langle\t\rangle^{-\al} \mD_H(t).  
$$
When $t \le 2$, it follows from the H\"older inequality and $\langle t\rangle^{q(d)} \les 1$ that 
\begin{equation} \label{est;ref2} 
\begin{aligned}
\int_0^t \langle t-\t\rangle^{-\frac{(3d-1)}{4}}& \|(\N (\wt{I}(a) a_H) \cdot m)m\|_{L^1} d\t \\
\les& \int_0^t \|\N (\wt{I}(a) a_H)\|_{L^\infty} \|m\|_{L^2}^2 d\t \\
\les& \int_0^t \langle\t\rangle^{-\frac{d}{2}} (\langle\t\rangle^{\frac{d}{4}} \|m\|_{\cB^0})^2 \|a\|_{\cB^{2+\frac{d}{2}}}^h d\t 
\les \langle t\rangle^{-q(d)} \mD(t)^2 \mX_2(t). 
\end{aligned}
\end{equation}
When $t > 2$, it follows from H\"older's inequality, \eqref{est;1/1+r2} and $\cB^{d/2}(\R^d) \hr L^\infty(\R^d)$ that 
\begin{equation} \label{est;ref3}
\begin{aligned}
\int_0^t \langle t-\t\rangle^{-\frac{(3d-1)}{4}}& \|(\N (\wt{I}(a)a_H) \cdot m)m\|_{L^1} d\t \\
\les& \left(\int_0^1+\int_1^t \right)  \langle t-\t\rangle^{-\frac{(3d-1)}{4}} \|(\N (\wt{I}(a)a_H) \cdot m)m\|_{L^1} d\t  \\
\les& \langle t\rangle^{-\frac{(3d-1)}{4}} \int_0^1 \left(\frac{\langle\t\rangle}{\langle t-\t\rangle}\right)^{\frac{(3d-1)}{4}} \|\N (\wt{I}(a)a_H)\|_{L^\infty} \|m\|_{L^2}^2 \,d\t \\
&+\int_1^t \langle t-\t\rangle^{-\frac{(3d-1)}{4}} \|\N (\wt{I}(a)a_H)\|_{L^\infty} \|m\|_{L^2}^2 \,d\t \\
\les& \langle t\rangle^{-\frac{(3d-1)}{4}} \mD(t)^2 \int_0^1 \|a\|_{\cB^{2+\frac{d}{2}}}^h d\t \\
&+\mD(t)^3 \int_1^t \langle t-\t\rangle^{-\frac{(3d-1)}{4}}\langle\t\rangle^{-\al} \left(\frac{\langle\t\rangle}{\t} \right)^\al %\t^\al \|a\|_{\cB^{2+\frac{d}{2}}}^h 
 \langle\t\rangle^{-2 \min (\frac{d}{4}, \al)}d\t \\
\les& \langle t\rangle^{-q(d)} (\mD(t)^2 \mX_2(t) +\mD(t)^3).  
\end{aligned}
\end{equation}
Combining \eqref{est;ref1}-\eqref{est;ref3} and applying Proposition \ref{prop;unif}, we obtain 
\begin{align} \label{est;ref_nonlin1}
\int_0^t \langle t-\t\rangle^{-\frac{(3d-1)}{4}}& \|(\N I(a) \cdot m)m\|_{L^1} d\t \les  \langle t\rangle^{-q(d)} (\mD(t)^2 \mX_2(t) +\mD(t)^3). 
\end{align}
On the other hand, since $\div (m \otimes m)=(m \cdot \N)m+m \,\div m$ and \eqref{est;1/1+r2}, we see that 
\begin{equation} \label{est;ref4}
\begin{aligned}
\int_0^t \langle t-\t\rangle^{-\frac{(3d-1)}{4}}& \|(I(a)-1) \div (m \otimes m)\|_{L^1} d\t \\
&\les (1+\mX_2(t)) \int_0^t \langle t-\t\rangle^{-\frac{(3d-1)}{4}} (\|(m \cdot \N)m\|_{L^1}+\|m\, \div m\|_{L^1}) d\t.  
\end{aligned}
\end{equation}
In what follows, we shall indicate only the estimation of $(m \cdot \N)m$. 
By H\"older's inequality and the embedding $\cB^0(\R^d) \hr L^2(\R^d)$, we have 
\begin{equation} \label{est;ref5}
\begin{aligned}
\int_0^t \langle t-\t\rangle^{-\frac{(3d-1)}{4}}& \|(m \cdot \N)m_L\|_{L^1} d\t \\
\les& \int_0^t \langle t-\t\rangle^{-\frac{(3d-1)}{4}} \|m\|_{\cB^0} \|\N m_L\|_{\cB^0}d\t \\
\les& \int_0^t \langle t-\t\rangle^{-\frac{(3d-1)}{4}} (\|m_L\|_{\cB^0}+\|m\|_{\cB^{-1+\frac{d}{2}}}^h) \|\N m_L\|_{\cB^0}d\t \\
\les&  \mD(t)^2 \int_0^t \langle t-\t\rangle^{-\frac{(3d-1)}{4}} \langle\t\rangle^{-\min (\frac{d}{4}, \al)} \langle\t\rangle^{-(\frac{d}{4}+\frac{1}{2})}d\t \\
\les& \mD(t)^2 
         \begin{cases} 
         \langle t\rangle^{-\frac{(3d-1)}{4}}, \quad d=2, \\
         \langle t\rangle^{-(\frac{d}{2}+\frac{1}{2})}, \quad d \ge 3. 
         \end{cases}
\end{aligned}
\end{equation}

When $t \le 2$, it follows from H\"older's inequality and $\cB^{0}(\R^d) \hr L^2(\R^d)$ that 
\begin{equation} \label{est;ref6}
\begin{aligned}
\int_0^t \langle t-\t\rangle^{-\frac{(3d-1)}{4}}& \|(m \cdot \N)m_H\|_{L^1} d\t \\
\les& \int_0^t \|m\|_{\cB^0} \|\N m_H\|_{\cB^0}d\t \\
\les& \mD(t) \int_0^t \|m\|_{\cB^{2+\frac{d}{2}}}^h d\t 
\les \langle t\rangle^{-\frac{(3d-1)}{4}} \mD(t) \mX_2(t). 
\end{aligned}
\end{equation} 
When $t \ge 2$, we obtain by $\langle t\rangle\langle t-\t\rangle^{-1} \les 1$ if $\t \le 1$ that  
\begin{equation} \label{est;ref7}
\begin{aligned}
\int_0^t \langle t-\t\rangle^{-\frac{(3d-1)}{4}}& \|(m \cdot \N)m_H\|_{L^1} d\t \\
\les& \langle t\rangle^{-\frac{(3d-1)}{4}} \int_0^1 \left(\frac{\langle t\rangle}{\langle t-\t\rangle}\right)^{\frac{(3d-1)}{4}} \|m\|_{\cB^0} \|\N m_H\|_{\cB^0}d\t \\
&+ \int_1^t \langle t-\t\rangle^{-\frac{(3d-1)}{4}} \|m\|_{\cB^0} \|\N m_H\|_{\cB^0} d\t \\
\les&  \langle t\rangle^{-\frac{(3d-1)}{4}} \mD(t) \int_0^1 \|m\|_{\cB^{2+\frac{d}{2}}}^h d\t \\
&+ \mD(t)^2 \int_1^t \langle t-\t\rangle^{-\frac{(3d-1)}{4}} \langle\t\rangle^{-\min (\frac{d}{4}, \al)} \langle\t\rangle^{-\al} \left(\frac{\langle\t\rangle}{\t} \right)^\al d\t \\
\les&  \langle t\rangle^{-\frac{(3d-1)}{4}} (\mD(t) \mX_2(t)+\mD(t)^2). 
\end{aligned}
\end{equation} 
Gathering \eqref{est;ref5}, \eqref{est;ref6} and \eqref{est;ref7}, we obtain 
\begin{equation} \label{est;ref8}
\int_0^t \langle t-\t\rangle^{-\frac{(3d-1)}{4}}\|(m \cdot \N)m\|_{L^1} d\t \les \langle t\rangle^{-q(d)}(\mD(t) \mX_2(t)+\mD(t)^2). 
\end{equation}
In a similar argument to obtaining \eqref{est;ref8}, we also obtain that 
\begin{equation} \label{est;ref9}
\int_0^t \langle t-\t\rangle^{-\frac{(3d-1)}{4}}\|m \,\div m\|_{L^1} d\t \les \langle t\rangle^{-q(d)}(\mD(t) \mX_2(t)+\mD(t)^2). 
\end{equation}
Therefore, by combining \eqref{est;ref4}, \eqref{est;ref8} and \eqref{est;ref9}, we arrive at 
\begin{align} \label{est;ref_nonlin2}
\int_0^t \langle t-\t\rangle^{-\frac{(3d-1)}{4}}& \|(I_1(a)-1) \div (m \otimes m)\|_{L^1} d\t 
\les  \langle t\rangle^{-q(d)}(1+\mX_2(t))(\mX_2(t)^2+\mD(t)^2).
\end{align}

\noindent
{\it The estimate for $\mu \Del (I(a)m)=\div(\wt{I}(a)(\N a \otimes m+a \N m))$}: By using the Hausdorff-Young inequality, \eqref{est;1/1+r2} and 
noting that $\|\F^{-1}[|\xi| \chi_L]\|_{L^1} \les 1$, we see that 
\begin{equation} \label{est;ref10}
\begin{aligned}  
\int_0^t \langle t-\t&\rangle^{-\frac{(3d-1)}{4}} \|\F^{-1}[\wt{\chi}_L \F[\Delta (I(a) m)]]\|_{L^1} d\t \\
&\les \int_0^t \langle t-\t\rangle^{-\frac{(3d-1)}{4}} \|\N a \otimes m\|_{L^1} d\t 
         + \int_0^t \langle t-\t\rangle^{-\frac{(3d-1)}{4}} \|a \N m\|_{L^1} d\t. 
\end{aligned}
\end{equation}
Therefore, by the same calculation as in the estimate for $\div((I(a)-1) m\otimes m)$, we are able to obtain that 
\begin{equation} \label{est;ref11}
\begin{aligned}  
\int_0^t \langle t-\t\rangle^{-\frac{(3d-1)}{4}} \|\F^{-1}[\wt{\chi}_L \F[\Delta (I(a) m)]]\|_{L^1} d\t 
\les \langle t\rangle^{-q(d)}(\mD(t) \mX_2(t)+\mD(t)^2). 
\end{aligned}
\end{equation}
For the other nonlinear terms, it can be handled by the same manner as \eqref{est;ref_nonlin1}, \eqref{est;ref_nonlin2}. 
\end{proof}

\begin{proof}[The proof of Proposition \ref{prop;ref_decay2}] 
Here, we only show the estimation of $(m \cdot \N)m$ because the way to the proofs are almost same as the proof of Proposition \ref{prop;decay_low_N}. 
Since $L^1(\R^d) \hr \dB_{2,\infty}^{-d/2}(\R^d)$ and thanks to H\"older's inequality, we obtain that for all $-d/2<s \le 1+d/2$, 
\begin{equation} \label{est;ref12}
\begin{aligned}
\int_0^t \langle t-\t\rangle^{-(\frac{d}{4}+\frac{s}{2})}& \|(m \cdot \N)m_L\|_{\dB_{2,\infty}^{-\frac{d}{2}}} d\t \\
\les& \int_0^t \langle t-\t\rangle^{-(\frac{d}{4}+\frac{s}{2})} \|m\|_{L^2} \|\N m_L\|_{L^2} d\t \\
\les& \mD(t)^2 \int_0^t \langle t-\t\rangle^{-(\frac{d}{4}+\frac{s}{2})} \langle\t\rangle^{-\min(\frac{d}{4},\al)} \langle\t\rangle^{-(\frac{d}{4}+\frac{1}{2})} d\t \\
\les& \langle t\rangle^{-(\frac{d}{4}+\frac{s}{2})} \mD(t)^2. 
\end{aligned}
\end{equation}
When $t \le 2$, it follows from Proposition \ref{prop;unif} and Lemma \ref{lem;cA-P} that 
\begin{equation}
\begin{aligned}
\int_0^t \langle t-\t\rangle^{-(\frac{d}{4}+\frac{s}{2})} \|(m \cdot \N)m_H\|_{\dB_{2,\infty}^{-\frac{d}{2}}} d\t 
\les& \int_0^t \|m\|_{\cB^{-1+\frac{d}{2}}} \|\N m_H\|_{\dB_{2,\infty}^{1-\frac{d}{2}}} d\t \\
\les& \int_0^t \|m\|_{\cB^{-1+\frac{d}{2}}} \|m\|_{\cB^{1+\frac{d}{2}}}^h d\t \\
\les& \mX_2(t)^2. 
\end{aligned}
\end{equation}
When $t>2$, by the similar computation as \eqref{est;ref7}, it plainly holds that 
\begin{equation} \label{est;ref13}
\begin{aligned}
\int_0^t \langle t-\t\rangle^{-(\frac{d}{4}+\frac{s}{2})}& \|(m \cdot \N)m_H\|_{\dB_{2,\infty}^{-\frac{d}{2}}} d\t \\
\les& \langle t\rangle^{-(\frac{d}{4}+\frac{s}{2})} \int_0^1 \left(\frac{\langle t\rangle}{\langle t-\t\rangle}\right)^{\frac{d}{4}+\frac{s}{2}} \|m\|_{\cB^{-1+\frac{d}{2}}} \|\N m_H\|_{\dB_{2,\infty}^{1-\frac{d}{2}}} d\t \\
      &+\int_1^t \langle t-\t\rangle^{-(\frac{d}{4}+\frac{s}{2})} \|m\|_{\cB^{-1+\frac{d}{2}}} \|\N m_H\|_{\dB_{2,\infty}^{1-\frac{d}{2}}} d\t \\
\les& \langle t\rangle^{-(\frac{d}{4}+\frac{s}{2})} \int_0^t \|m\|_{\cB^{-1+\frac{d}{2}}} \|m\|_{\cB^{1+\frac{d}{2}}}^h d\t \\
      &+\mD(t)^2 \int_1^t \langle t-\t\rangle^{-(\frac{d}{4}+\frac{s}{2})} \langle\t\rangle^{-\min(\frac{d}{4},\al)} \langle\t\rangle^{-\al} \left(\frac{\langle\t\rangle}{\t}\right)^\al  d\t \\
\les& \langle t\rangle^{-(\frac{d}{4}+\frac{s}{2})} (\mX_2(t)^2+\mD(t)^2).  
\end{aligned}
\end{equation}
Gathering \eqref{est;ref12}-\eqref{est;ref13}, we get 
\begin{align*}
\int_0^t \langle t-\t\rangle^{-(\frac{d}{4}+\frac{s}{2})}& \|(m \cdot \N)m\|_{\dB_{2,\infty}^{-\frac{d}{2}}} d\t 
\les \langle t\rangle^{-(\frac{d}{4}+\frac{s}{2})} (\mX_2(t)^2+\mD(t)^2). 
\end{align*} 
We would like to conclude the proof of Proposition \ref{prop;ref_decay2}.  
\end{proof}

Applying Proposition \ref{prop;ref_decay1} to \eqref{est;basic_1}, we obtain that 
\begin{align} \label{est;modif_decay1}
 \mD_L^{(2)} \les \|(a_0,m_0)\|_{L^1}+\mX_2(t)^2+\mD(t)^2+\mD(t)^3. 
\end{align} 
Analogously, we obtain by Applying Proposition \ref{prop;ref_decay2} to \eqref{est;basic_2} that  
\begin{align} \label{est;modif_decay2}
 \mD_L^{(1)} \les \|(a_0,m_0)\|_{L^1}+\mX_2(t)^2+\mD(t)^2. 
\end{align} Therefore it is completion to obtain the estimate for $\mD^{(i)}_L(t)$ $(i=1,2)$. 

\subsection{The uniform boundedness for $\mD_H(t)$ and $\mD(t)$}  \label{subsect;bdd2} 
By the same argument as \eqref{decay;high_am}, we are able to obtain that 
\begin{equation}
\begin{aligned}
\|\langle\t\rangle^\al (|\N|a,m)\|_{\wt{L^\infty(I};\cB^{-1+\frac{d}{2}})}^h
&+\|\t^\al (|\N|a,m)\|_{\wt{L^\infty(I};\cB^{-1+\frac{d}{2}})}^h \\
&\les \mX_{2,0}+\|\t^\al N(a,m)\|_{\wt{L^\infty(I};\cB^{-1+\frac{d}{2}})}^h. 
\end{aligned}
\end{equation}
The estimate for the nonlinear term is also same as Proposition \ref{prop;high_N}. 
Therefore, we only give the estimation of $\div ((I(a)-1)m \otimes m)$. 
Thanks to \eqref{est;Banach_ring} and \eqref{est;1/1+r2}, we obtain that  %and Lemma \ref{lem;KS_type}, 
\begin{align*}
\|\t^\al \div ((I(a)-1)m \otimes m)\|_{\wt{L^\infty_t(}\cB^{-1+\frac{d}{2}})}^h
\les& \|\t^\al (I(a)-1)m \otimes m\|_{\wt{L^\infty_t(}\cB^\frac{d}{2})}^h \\
\les& (1+\mX_2(t)) \times \\ 
      &\left(\|\t^{\frac{d}{2}-\frac{\ve}{2}}m_L\|_{\wt{L^\infty_t(}\cB^\frac{d}{2})}
      \|\t^{\frac{d}{2}-\frac{\ve}{2}}m_L\|_{\wt{L^\infty_t(}\cB^\frac{d}{2})} \right. \\
    &\left. +\|m_L\|_{\wt{L^\infty_t(}\cB^\frac{d}{2})}
      \|\t^\al m_H\|_{\wt{L^\infty_t(}\cB^\frac{d}{2})} \right.  \\
    &\left. +\|\t^\al m_H\otimes m_H\|_{\wt{L^\infty_t(}\cB^\frac{d}{2})}\right) . 
\end{align*}
Applying \eqref{est;KS_type} in Lemma \ref{lem;KS_type} to the last term and using the embedding 
$\cB^{-1+d/2}(\R^d) \hr \fB_{\infty,\infty}^{-1}(\R^d)$ obtained by Lemma \ref{lem;sm}, we see that 
\begin{align*}
\|\t^\al m_H \otimes m_H\|_{\wt{L^\infty_t(}\cB^\frac{d}{2})}
\les \|m_H\|_{\wt{L^\infty_t(}\fB_{\infty,\infty}^{-1})}
      \|\t^\al m_H\|_{\wt{L^\infty_t(}\cfB^{1+\frac{d}{p}})} 
\les \mX_2(t)^2+\mD(t)^2. 
\end{align*}
Combining the above estimates and the similar estimate as \eqref{est;komakai}; 
\begin{align} \label{est;komakai2}
\|\t^{\frac{d}{2}-\frac{\ve}{2}}m_L\|_{\wt{L^\infty_t(}\cB^\frac{d}{2})}
\les \|\langle\t\rangle^{\frac{d}{2}-\frac{\ve}{2}} m_L\|_{L^\infty(I;\cB^{\frac{d}{2}-\ve})} 
\les \mD(t), 
\end{align}
we obtain that 
\begin{align} \label{est;high_ref_N}
\|\t^\al \div ((I(a)+1)m \otimes m)\|_{\wt{L^\infty_t(}\cB^{-1+\frac{d}{2}})}^h 
\les \mX_2(t)^2+\mD(t)^2. 
\end{align}
In a similar way to obtaining \eqref{est;high_ref_N}, one can obtain that 
\begin{align} \label{est;high_ref_N2}
\|\t^\al N(a,m)\|_{\wt{L^\infty_t(}\cB^{-1+\frac{d}{2}})}^h 
\les \mX_2(t)^2+\mD(t)^2 
\end{align}
which yields to the following estimate for the high-frequencies: 
\begin{align} \label{est;modif_decay3}
\mD_H(t) \les \mX_{2,0}+\mX_2(t)^2+\mD(t)^2. 
\end{align}
Therefore, combining \eqref{est;modif_decay1}, \eqref{est;modif_decay2} and \eqref{est;modif_decay3}, we arrive at 
\begin{align} \label{est;modif_decay4}
\mD(t) \les \mX_{2,0}+\|(a_0,m_0)\|_{L^1}+\mX_2(t)^2+\mD(t)^2+\mD(t)^3. 
\end{align}
The above estimate \eqref{est;modif_decay4} together with $\mX_{2,0}+\|(a_0,m_0)\|_{L^1} \ll 1$ leads to the our assertion 
$\mD(t) \les 1$. From this uniform estimate, we immediately obtain $\|a(t)\|_{L^\infty}=O(t^{-q(d)})$ as $t \to \infty$. 

\subsection{The time-decay of the differences $m(t)-e^{t \mu \Delta} \mathcal{P}_\s m_0$} 
In order to complete the proof of Theorem \ref{thm;asympt}, 
we need to show the estimate for the difference between the momentum and the solution to the Stokes equation 
$m(t)-e^{t \mu\Delta} \mathcal{P}_\s m_0$ to use $\mD(t) \les 1$ as obtained in \S \ref{subsect;bdd2}. 
In what follows, we set $m_c(t):=m(t)-e^{t \mu \Delta} \mathcal{P}_\s m_0$ for simplicity. 
Firstly, let us consider the estimate for the $m_c$ in the low-frequency region. 
By $\eqref{eqn;aLmL}_2$, the low-frequency part of $m$ can be written down the followings: 
\begin{align*}
m_{c,L}(t)=G_L^{2,1}(t,\cdot)*a_0+\wt{G_L^{2,2}}(t,\cdot)*m_0+\int_0^t G_L^{2,2}(t-\t,\cdot)*N(a,m)\,d\t, 
\end{align*}
where $m_{c,L}=m_{c,L}(t)$ and $\wt{G^{2,2}_L}(t,x)$ are given by 
$$
m_{c,L}=\F^{-1}[\chi_L \wh{m_c}], \quad 
\wt{G^{2,2}_L}(t,x)=\F^{-1}\left[\chi_L(\xi) \frac{\lam_+(\xi)e^{\lam_+(\xi)t}-\lam_-(\xi)e^{\lam_-(\xi)t}}{\lam_+(\xi)-\lam_-(\xi)} \cdot \frac{\xi {}^t\xi}{|\xi|^2}\right]. 
$$
By using Lemma \ref{lem;DW} and noting that 
$$
G^{2,1}_{L}(t,\cdot)=-\N(\gm^2+\kappa \Delta) K_{L,\psi_1}(t,\cdot), \quad \wt{G_L^{2,2}}(t,\cdot)=\pt_t K_{L,\psi_2}(t,\cdot) 
$$
with $\psi_1 \equiv 1$ and $\psi_2=\frac{\xi {}^t\xi}{|\xi|^2}$, 
we immediately see that 
\begin{align} \label{est;fund_2}
\|G_L^{2,1}(t,\cdot)\|_{L^\infty}+\|\wt{G^{2,2}_L}(t,\cdot)\|_{L^\infty} \les \langle t\rangle^{-\frac{(3d-1)}{4}}. 
\end{align}
In a similar way to getting \eqref{est;basic_1}, we obtain by the Hausdorff-Young inequality, \eqref{est;fund_2} 
and the boundedness of the Helmholtz projection $\mathcal{P}_\s$ on $\dot{B}_{2,1}^s$ that 
\begin{equation} \label{est;fund_mc}
\begin{aligned}
\|m_{c,L}(t)\|_{L^\infty}
\les& \|G_L^{2,1}(t,\cdot)\|_{L^\infty}\|a_0\|_{L^1}+\|\wt{G^{2,2}_L}(t,\cdot)\|_{L^\infty}\|m_0\|_{L^1} \\
     &+\int_0^t \|\wt{G^{2,2}_L}(t-\t,\cdot)\|_{L^\infty} \|\wt{N_L}(a,m)\|_{L^1}\,d\t \\
     &+\int_0^t \|e^{(t-\t)\mu \Delta}\mathcal{P}_\s N_L(a,m)\|_{L^\infty}d\t \\
\les& \langle t\rangle^{-\frac{(3d-1)}{4}}\|(a_0,m_0)\|_{L^1}+\int_0^t \langle t-\t\rangle^{-\frac{(3d-1)}{4}}  \|\wt{N_L}(a,m)\|_{L^1}\,d\t \\
     &+\int_0^t \|e^{(t-\t)\mu \Delta} N_L(a,m)\|_{\cB^\frac{d}{2}}d\t. 
\end{aligned}
\end{equation}
As for the last term, it follows from the arguments similar to \eqref{est;G*U_0} and \eqref{est;tl_I2} that 
\begin{equation} \label{est;fund_mc2}
\begin{aligned}
\int_0^t \|e^{(t-\t)\mu \Delta} N_L(a,m)\|_{\cB^\frac{d}{2}}d\t 
\les& \int_0^{t/2} \langle t-\t\rangle^{-(\frac{d}{2}+\frac{1}{2})} \||\N|^{-1} N_L(a,m)\|_{\dB_{2,\infty}^{-\frac{d}{2}}} d\t \\
      &+\int_{t/2}^t \|N_L(a,m)\|_{\cB^\frac{d}{2}} d\t. 
\end{aligned}
\end{equation}

\begin{prop} \label{prop;ref_decay3} Let $d \ge 2$.  
There exists some constant $C>0$ such that for all $t>2$, 
\begin{align*}
 \int_0^t \langle t-\t\rangle^{-(\frac{d}{2}+\frac{1}{2})} \||\N|^{-1} N_L(a,m)\|_{\dB_{2,\infty}^{-\frac{d}{2}}} d\t 
\le Ct^{-(\frac{d}{2}+\frac{1}{2})} \delta(t), %(\mX_2(t)^2+\mD(t)^2), 
\end{align*}
where we recall that $\delta(t)$ is given by $\log t$ if $d=2$ and $1$ if $d \ge 3$. 
\end{prop}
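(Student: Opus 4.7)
The plan is to mirror the proof of Proposition \ref{prop;0_t/2} with the specialization $p=2$ and $s=d/2$, noting that these choices give $\frac{d}{2p'}+\frac{s+1}{2}=\frac{d}{2}+\frac{1}{2}$, exactly the exponent appearing in the statement. Because $N_L$ differs from the Besov low-frequency truncation $N^{\ell}$ only by the smooth compactly supported multiplier $\chi_L$, which is a bounded Fourier multiplier on the homogeneous Besov spaces used here, the two low-frequency restrictions are equivalent up to a universal constant; hence the term-by-term bookkeeping from Proposition \ref{prop;0_t/2} transfers with the product laws \eqref{est;p-3}, \eqref{est;p-4} at $p=2$, together with the series estimates \eqref{est;1/1+r} and \eqref{est;I_P} for the Taylor expansions defining $I(a)$ and $\wt{I}_P(a)$.

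Concretely, I would decompose $N(a,m)$ into its four pieces $\div((I(a)-1)\,m\otimes m)$, $\N(\wt{I}_P(a)\,a^2)$, $\mathcal{L}(I(a)m)$ and $\div\mathcal{K}(a)$. After applying $|\N|^{-1}$, each contributes an integrand bounded by a product of the form $\|f\|_{\cB^{-1+d/2}}\|g\|_{\dB_{2,\infty}^{1-d/2}}$ or $\|f\|_{\cB^{d/2}}\|g\|_{\dB_{2,\infty}^{-d/2}}$, with $f,g\in\{a,\N a,m\}$. Splitting each factor into low- and high-frequency pieces and invoking the definition of the modified decay functional $\mD(t)$ together with the bound $\mD(t)\les 1$ obtained in \S\,\ref{subsect;bdd2}, every such product is controlled by a power of $\langle\t\rangle$: for the viscous and pressure pieces the combined decay is at least $\langle\t\rangle^{-(d/2+1/2)}$, while for the convective nonlinearity and the quadratic-gradient Korteweg terms it is only $\langle\t\rangle^{-d/2}$. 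Pulling out the factor $t^{-(d/2+1/2)}$ via $\langle t-\t\rangle\ge \langle t\rangle/2$ on $[0,t/2]$ then reduces matters to
\[
\int_0^{t/2}\langle\t\rangle^{-\frac{d}{2}}\,d\t \les \delta(t),
\]
which is $\log t$ if $d=2$ and $O(1)$ if $d\ge 3$, producing exactly the factor $\delta(t)$ in the conclusion.

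The main obstacle is precisely the convective term $\div((I(a)-1)m\otimes m)$ together with the quadratic-gradient Korteweg contributions $\N|\N a|^2$ and $\div(\N a\otimes\N a)$: their combined low-frequency decay barely fails to be integrable in dimension two, and this borderline behaviour is the sole source of the logarithmic loss. The high-frequency components of $a$ and $m$ inside each nonlinearity must also be handled carefully, by following the $t\le 2$ versus $t>2$ dichotomy used in Proposition \ref{prop;0_t/2}: on $[0,1]$ one trades the time weight against $\mX_2(t)$, bounded by Proposition \ref{prop;unif}, while on $[1,t]$ one uses the pointwise inequality $\langle\t\rangle^{\al}/\t^{\al}\les 1$ to convert the time weight from $\mD_H$ into the desired $\langle\t\rangle^{-\al}$ decay. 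Performing these low/high splittings for each of the four nonlinear blocks exactly as in the $0\le\t\le t/2$ range of Proposition \ref{prop;0_t/2}, and summing the contributions, yields the stated bound.
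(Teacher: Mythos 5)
Your proof mirrors the paper's own: it runs the $[0,t/2]$ computations of Proposition~\ref{prop;0_t/2} at $p=2$, $s=d/2$, with the product laws \eqref{est;p-3}--\eqref{est;p-4}, the series bounds \eqref{est;1/1+r}, \eqref{est;I_P}, and the uniform bound $\mD(t)\lesssim 1$ established in \S\,\ref{subsect;bdd2}, exactly as in the paper's argument for \eqref{0_t/2;convect_a}. One small misattribution: the pressure piece $\N(\wt{I}_P(a)a^2)$ and the Korteweg piece $\N\Delta a^2$ also decay only like $\langle\tau\rangle^{-d/2}$ (see \eqref{0_t/2;pres}, \eqref{0_t/2;Korte_1}), so they likewise contribute to the logarithm when $d=2$; this does not change the conclusion.
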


\begin{prop} \label{prop;ref_decay4} Let $d \ge 2$.  
There exists some constant $C>0$ such that for all $t>2$, 
\begin{align*}
\int_{t/2}^t \|N_L(a,m)\|_{\cB^\frac{d}{2}} d\t
\le Ct^{-(\frac{d}{2}+\frac{1}{2})}. %(\mX_2(t)^2+\mD(t)^2), 
\end{align*}
%where we recall that $\delta(t)$ is given by $\log t$ if $d=2$ and $1$ if $d \ge 3$. 
\end{prop}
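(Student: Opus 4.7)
The plan is to follow closely the argument used in Proposition~\ref{prop;t/2_t} (the analogous $[t/2,t]$ estimate in the general $L^p$-$L^1$ framework), specialized to $p=2$ and $s=d/2$, and to exploit the uniform bound $\mD(t) \les 1$ established in \S\,\ref{subsect;bdd2} (which in particular implies $\mX_2(t) \les 1$). Since $t>2$, throughout the integration interval $[t/2,t]$ we have $\t \sim t \ge 1$, so the problem reduces to producing a pointwise decay bound of the form $\|N_L(a,m)(\t)\|_{\cB^{d/2}} \les \t^{-(d+1/2)}$ for each nonlinear contribution; after multiplication by the interval length $t/2$ this yields $\les t^{-(d-1/2)}$, which is dominated by $t^{-(d/2+1/2)}$ exactly when $d \ge 2$.

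I would split $N(a,m)$ from \eqref{eqn;nonlin} into its four constituents: the convection $\div((I(a)-1) m \otimes m)$, the pressure $I_P(a)\N a$, the viscous part $\mathcal{L}(I(a) m)$ and the Korteweg part $\div \mathcal{K}(a)$. The unifying device will be Bernstein's inequality on the low-frequency projection: any partial derivative applied to a low-frequency-localized object (which $N_L(a,m)$ is, by construction) is bounded on $\cB^{d/2}$ with a constant depending only on the cutoff $\chi_L$, so that estimates of the type $\|\N \div g\|_{\cB^{d/2}}^\ell \les \|g\|_{\cB^{d/2}}^\ell$ are available. Combined with the divergence-form identities $\Del(a^2) = 2\div(a \N a)$, the analogous rewriting of $\N|\N a|^2$ and $\div(\N a \otimes \N a)$, and the product rule $\N(I(a)m) = \N I(a) \otimes m + I(a)\N m$, this lets me strip the outermost derivative(s) from every term.

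After that reduction, the remaining products involve at most one derivative on each factor and can be controlled in $\cB^{d/2}$ by the Banach algebra property \eqref{est;Banach_ring}, the pressure bound \eqref{est;I_P} and the analogue of \eqref{est;1/1+r} yielding $\|I(a)\|_{\cB^{d/2}} \les \|a\|_{\cB^{d/2}}$. Inserting the decay rates supplied by $\mD(t) \les 1$, namely $\|(a,m)(\t)\|_{\cB^{s}} \les \t^{-(d/4+s/2)}$ for $-d/2 < s \le 1+d/2$ (the low-frequency portion from $\mD_L^{(1)}$, plus the strictly faster high-frequency portion from $\mD_H$), every piece is bounded by at least $\t^{-(d+1/2)}$: for instance the pressure by $\|I_P(a)\|_{\cB^{d/2}}\|\N a\|_{\cB^{d/2}} \les \t^{-d/2}\t^{-(d/2+1/2)}$, the Korteweg term $\N\Del a^2$, after rewriting as $2\N\div(a\N a)$, by $\|a\|_{\cB^{d/2}}\|\N a\|_{\cB^{d/2}}$ with the same decay, and the viscous term $\Del(I(a)m)$, via $\|\N\div f\|_{\cB^{d/2}}^\ell \les \|f\|_{\cB^{d/2}}^\ell$, by $\|\N I(a)\|_{\cB^{d/2}}\|m\|_{\cB^{d/2}} + \|I(a)\|_{\cB^{d/2}}\|\N m\|_{\cB^{d/2}}$. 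The convection is even better behaved.

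The main obstacle I anticipate is the Korteweg contribution, whose three-derivative structure naively exceeds the regularity range $s \le 1+d/2$ accessible to $\mD_L^{(1)}$ and $\mD_H$: the divergence-form rewriting is essential here to transfer two derivatives onto the low-frequency cutoff. The resulting bound $\t^{-(d+1/2)}$ is just enough in the borderline case $d=2$, producing exactly $t^{-3/2}=t^{-(d/2+1/2)}$ after integration over $[t/2,t]$ with no slack; for $d \ge 3$ all four contributions are strictly better than required. Summing the four pieces then closes the argument.
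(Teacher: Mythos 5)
Your proposal follows the paper's own proof precisely: the paper simply specializes Proposition~\ref{prop;t/2_t} to $p=2$, $s=d/2$ (with the uniform bound $\mD(t)\les 1$ from \S\,\ref{subsect;bdd2}), illustrates the calculation on the convection term $\div((I(a)-1)m\otimes m)$ decomposed as $(\N I(a)\cdot m)m+(I(a)-1)((m\cdot\N)m+m\,\div m)$, and invokes analogy for the remaining terms. Two small calibration remarks on your sketch: for the viscous term you must strip \emph{exactly one} of the two outer derivatives of $\Delta(I(a)m)$ (or $\N\div(I(a)m)$) via Bernstein on the low-frequency cutoff, keeping the other to distribute onto a factor --- your final bound $\|\N I(a)\|_{\cB^{d/2}}\|m\|_{\cB^{d/2}}+\|I(a)\|_{\cB^{d/2}}\|\N m\|_{\cB^{d/2}}\les\t^{-(d+1/2)}$ is the right one, whereas the estimate $\|\N\div f\|_{\cB^{d/2}}^\ell\les\|f\|_{\cB^{d/2}}^\ell$ applied directly to $f=I(a)m$ would only give $\les\t^{-d}$ and hence $t^{-(d-1)}$ after integration, which misses the target at $d=2$; and the convection is not ``even better behaved,'' since $(I(a)-1)(m\cdot\N)m$ also decays only like $\t^{-(d+1/2)}$ and is exactly as borderline at $d=2$ as the Korteweg piece, with only the cubic $(\N I(a)\cdot m)m$ strictly faster.
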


\begin{proof}[The proof of Proposition \ref{prop;ref_decay3}]
Here, we only show the estimation of $\div((I(a)-1)m \otimes m)$ 
because the way to the proofs are almost same as the proof of Proposition \ref{prop;0_t/2}.  
Thanks to \eqref{est;decay}, 
\eqref{est;1/1+r}, \eqref{est;p-3} and \eqref{est;p-4}, we see that 
\begin{equation} \label{0_t/2;convect_a}
\begin{aligned}
\int_0^{t/2}
       \langle t-\t\rangle^{-(\frac{d}{2}+\frac{1}{2})}
       & \||\N|^{-1}\div((I(a)-1)m \otimes m)\|_{\dB_{2,\infty}^{-\frac{d}{2}}}
      d\t \\
\les& \int_0^{t/2}
       \langle t-\t\rangle^{-(\frac{d}{2}+\frac{1}{2})}
       \Big(1+\|a\|_{\cB^\frac{d}{2}}\Big) \|m \otimes m\|_{\dB_{2,\infty}^{-\frac{d}{2}}}
      d\t \\
\les& (1+\mX_2(t)) \int_0^{t/2}
       \langle t-\t\rangle^{-(\frac{d}{2}+\frac{1}{2})}
       \|m\|_{\cB^{-1+\frac{d}{2}}} \|m\|_{\dB_{2,\infty}^{1-\frac{d}{2}}}
      d\t \\
\les& t^{-(\frac{d}{2}+\frac{1}{2})} (1+\mX_2(t)) \mD(t)^2 
      \int_0^{t/2} %\langle\t\rangle^{-\min(\frac{d}{2}-\frac{1}{2},\al)} \langle\t\rangle^{-\frac{1}{2}} 
      \langle\t\rangle^{-\frac{d}{2}}
      d\t
 \les t^{-(\frac{d}{2}+\frac{1}{2})} \delta(t). 
\end{aligned}
\end{equation}
Analogously, we are able to obtain the desired estimate. 
\end{proof}

\begin{proof}[The proof of Proposition \ref{prop;ref_decay4}] 
Here, we only show the estimation of $\div((I(a)-1)m \otimes m)$ 
because the way to the proofs are almost same as the proof of Proposition \ref{prop;t/2_t}. 
Notice that 
\begin{align*}
\div ((I(a)-1)m \otimes m)
= (\N I(a) \cdot m)m+(I(a)-1)\,((m \cdot \N)m+m \,\div m), 
\end{align*}
it follows from Lemma \ref{lem;A-P} with \eqref{est;Banach_ring}, \eqref{est;Na} 
and \eqref{est;asert_decay} that 
\begin{align*}
\int_{t/2}^t \|(\N I(a) \cdot m)m\|_{\cB^\frac{d}{2}}d\t
\les& \int_{t/2}^t
      \|\N a\|_{\cB^\frac{d}{2}}
      \|m\|_{\cB^\frac{d}{2}}\|m\|_{\cB^\frac{d}{2}} d\t \\
\les& \mD(t)^3
      \int_{t/2}^t
       \t^{-(\frac{d}{2}+\frac{1}{2})} \t^{-d}
       d\t \\
\les& t^{-(\frac{d}{2}+\frac{1}{2})}t^{-d} \frac{t}{2}
\les t^{-(\frac{d}{2p'}+\frac{s+1}{2})}, 
\end{align*}
where we used $d \ge 2$ in the last inequality. Similarly, 
by using \eqref{est;Banach_ring} and \eqref{est;1/1+r}, 
\begin{align*}
\int_{t/2}^t \|(I(a)-1)(m\cdot\N)m\|_{\cB^\frac{d}{2}}d\t
\les& \int_{t/2}^t
      \Big(1+\|a\|_{\cB^\frac{d}{2}}\Big)
      \|m\|_{\cB^\frac{d}{2}} \|\N m\|_{\cB^\frac{d}{2}} d\t \\
\les& (1+\mX_2(t)) \mD(t)^2
      \int_{t/2}^t
        \t^{-(d+\frac{1}{2})} 
       d\t 
%\les t^{-d-\frac{1}{2}}\frac{t}{2}
\les t^{-(\frac{d}{2}+\frac{1}{2})}
\end{align*}
and 
\begin{align*}
\int_{t/2}^t \|(I(a)-1)\,m \,\div m\|_{\cfB^s}d\t
\les t^{-(\frac{d}{2}+\frac{1}{2})}. 
\end{align*}
Combining the above estimations, we obtain 
\begin{align} \label{t/2_t;convect}
\int_{t/2}^t \|\div ((I(a)-1)m \otimes m)\|_{\cfB^s}d\t
\les t^{-(\frac{d}{2}+\frac{1}{2})}. 
\end{align}
Analogously, we are able to obtain the desired estimate. 
\end{proof}

Applying Propositions \ref{prop;ref_decay1}, \ref{prop;ref_decay3} and \ref{prop;ref_decay4} to \eqref{est;fund_mc} with \eqref{est;fund_mc2},  we get 
\begin{align} \label{est;low_mc} 
\|m_{c,L}(t)\|_{L^\infty} \les t^{-q(d)}. 
\end{align}

Finally, let us consider the estimate for the high-frequency part of $m_c$. 
Notice that $\pt_t \mathcal{P}_\s e^{t \mu\Del} m_0-\mu \Del \mathcal{P}_\s e^{t \mu\Del} m_0=0$ and $\div \mathcal{P}_\s e^{t \mu\Del} m_0=0$, 
\eqref{eqn;mNSK2} can be transformed into the following system:
\begin{equation} \label{eqn;m_c}
\left\{
\begin{aligned}
&\pt_t a+\div m_{c}=0, \\
&\pt_t m_c-\mu \Del m_c+(\lam+\mu)\N \div m_c+\gm^2\N a - \kappa \N \Delta a=N(a,m), \\
&(a,m_c)|_{t=0}=(a_0,\mathcal{P}_\s^{\perp} m_0), 
\end{aligned}
\right. 
\end{equation}
where $\mathcal{P}_\s^{\perp}:=\text{Id}-\mathcal{P}_\s$. 
By the same argument as \eqref{decay;high_am}, we are able to obtain that 
\begin{equation} \label{est;high_mc}
\begin{aligned}
\|\langle\t\rangle^\al (|\N|a,m_c)\|^h_{\wt{L_t^\infty}(\cB^{-1+\frac{d}{2}})}
&+\|\t^\al (|\N|a,m_c)\|^h_{\wt{L_t^\infty}(\cB^{1+\frac{d}{2}})} \\
&\les \|(|\N|a_0,\mathcal{P}_\s^\perp m_0)\|_{\cB^{-1+\frac{d}{2}}}^h+\|\t^\al N(a,m)\|_{\wt{L^\infty_t}(\cB^{-1+\frac{d}{2}})}^h. 
\end{aligned}
\end{equation}
By \eqref{est;high_ref_N2} and $\mX_2(t) \les 1$, $\mD(t) \les 1$, we obtain from \eqref{est;high_mc} that 
\begin{align} \label{est;high_mc2}
\|\langle\t\rangle^\al (|\N|a,m_c)\|^h_{\wt{L_t^\infty}(\cB^{-1+\frac{d}{2}})}+\|\t^\al (|\N|a,m_c)\|^h_{\wt{L_t^\infty}(\cB^{1+\frac{d}{2}})}
\les 1. 
\end{align}
Therefore, by \eqref{est;low_mc} and \eqref{est;high_mc}, we obtain that for all $t>2$, 
\begin{equation} \label{est;mc}
\begin{aligned}
\|m_c(t)\|_{L^\infty} \les& \|m_{c,L}(t)\|_{L^\infty}+\|m_{c,H}(t)\|_{L^\infty} \\
                               \les& \|m_{c,L}(t)\|_{L^\infty}+\|m_c(t)\|_{\cB^{1+\frac{d}{2}}}^h 
                               \les t^{-q(d)}+t^{-\al} \les t^{-q(d)}. 
\end{aligned}
\end{equation}
Therefore, the conclusion of \S \ref{subsect;bdd2} and \eqref{est;mc} give us the assertion of Theorem \ref{thm;asympt}. 
\vskip2mm
\noindent
{\bf Acknowledgments}. 
The first author is supported by Grant-in-Aid for Scientific Research (C) JP22K03374. 
The second author is supported by JSPS Grant-in-Aid for Early-Career Scientists JP22K13936. 
%Grant-in-Aid for Scientific Research(S)or(A)or(B)or(C)

%%%%%%%%%%%%%%%%%%%%%%%%%%%%%%%%%%%%%%%%%%%%%%%%%%%%
% References
%%%%%%%%%%%%%%%%%%%%%%%%%%%%%%%%%%%%%%%%%%%%%%%%%%%

\baselineskip 0mm

\end{document}